\numberwithin{equation}{section}
\theoremstyle{plain}
\newtheorem{thm}{Theorem}[section]
\newtheorem{lem}[thm]{Lemma}
\newtheorem{cor}[thm]{Corollary}
\newtheorem{prop}[thm]{Proposition}
\theoremstyle{definition}
\newtheorem{df}[thm]{Definition}
\newtheorem{rem}[thm]{Remark}
\newtheorem{ex}[thm]{Example}
\newtheorem*{prob*}{Problem}
\tikzset{%
 shaded/.style={draw, shape=circle, fill=black!35, inner sep=1.4pt},
 unshaded/.style={draw, shape=circle, fill=white, inner sep=1.4pt},
 quasi/.style={draw, shape=rectangle, rounded corners=3pt, fill=white, inner sep=2.5pt, minimum height=14.5pt},
 blob/.style={draw, shape=rectangle, rounded corners=12pt, thin, densely dotted},
 arrow/.style={->, thin, >=latex, shorten >=2.5pt, shorten <=2.5pt},
 order/.style={thin},
 curvy/.style={thin, looseness=1.2, bend angle=70},
 fatcurvy/.style={thin, looseness=1.7, bend angle=75},
 label/.style={shape=rectangle, inner sep=6pt},
 auto}
\font\bmi=cmmi8 scaled 1440
\newcommand{\powerset}{\raise.6ex\hbox{\bmi\char'175 }}
\newcommand{\RRA}{\mathsf{RRA}}
\newcommand{\RDqRA}{\mathsf{RDqRA}}
\newcommand{\DqRA}{\mathsf{DqRA}}
\newcommand{\qRA}{\mathsf{qRA}}
\newcommand{\RA}{\mathsf{RA}}
\newcommand{\RWkRA}{\mathsf{RWkRA}}
\newcommand{\RwkRA}{\mathsf{RwkRA}}
\newcommand{\tdn}{{\triangledown n}}
\newcommand{\littleand}{\mathbin{\wedge\kern -8truept \wedge}}
\newcommand{\bigor}{\mathop{\bigvee\kern -8.5truept \bigvee}}
\newcommand{\Bigor}{\mathop{\bigvee\kern -10truept \bigvee}}
\newcommand{\littleor}{\mathbin{\vee\kern -8truept \vee}}
\renewcommand{\le}{\leqslant}
\renewcommand{\leq}{\leqslant}
\renewcommand{\ge}{\geqslant}
\renewcommand{\preceq}{\preccurlyeq}
\begin{document}


\title[Representable distributive quasi relation algebras]{Representable distributive quasi relation algebras}

\corrauthor[A. Craig]{Andrew Craig}
\address{Department of Mathematics and Applied Mathematics\\
University of Johannesburg\\PO Box 524, Auckland Park, 2006\\South~Africa\\
and\\
National Institute for Theoretical and Computational Sciences (NITheCS)\\Johannesburg, 
South Africa}
\email{acraig@uj.ac.za}

\author[C. Robinson]{Claudette Robinson}
\address{Department of Mathematics and Applied Mathematics\\
University of Johannesburg\\PO Box 524, Auckland Park, 2006\\South~Africa}
\email{claudetter@uj.ac.za}

\subjclass{06F05, 
03B47, 
03G10
}

\keywords{quasi relation algebra, relation algebra, FL-algebra, representability}

\begin{abstract}
We give a definition of representability for distributive quasi relation algebras (DqRAs). These algebras are a generalisation of relation algebras and were first described by Galatos and Jipsen~(2013). Our definition uses a construction that starts with a poset. The algebra is concretely constructed as the lattice of upsets of a partially ordered equivalence relation. The key to defining the three negation-like 
unary operations
is to impose certain symmetry requirements on the partial order. 
Our definition of representable distributive quasi relation algebras is easily seen to be a generalisation of the definition of representable relations algebras by J\'{o}nsson and Tarski (1948). 
We give examples of representable DqRAs and give a necessary condition for an algebra to be finitely representable. 
We leave open the questions of whether every DqRA is representable, and also whether  the class of representable DqRAs forms a variety. Moreover, our definition provides many other  opportunities for investigations in the spirit of those carried out for representable relation algebras. 
\end{abstract}

\maketitle


\section{Introduction}\label{sec:intro}

In 1948 J\'{o}nsson and Tarski~\cite{JT48} defined what are now called representable relation algebras.
These are relation algebras that are isomorphic to a subalgebra of a product of algebras of all binary relations on a set, with the associative binary operation interpreted
as the usual composition of binary relations, its unit as the identity relation, and the involutive
unary operation as the converse relation.
The class of such algebras is denoted by $\mathsf{RRA}$. It can equivalently be defined via a construction using subalgebras of all binary relations in an equivalence relation on a set. Lyndon~\cite{Lyn50} showed that there exist abstract relation algebras (i.e. which satisfied the axioms given by Tarski~\cite{Tar41}) that are not representable. In 1955, Tarski~\cite{Tar55} then showed that $\mathsf{RRA}$ forms a variety, and nine years later Monk~\cite{Monk64} proved that $\RRA$ is not  finitely axiomatizable.

What makes the study of representability 
particularly 
intriguing is the 2001 result of Hirsch and Hodkinson~\cite{HH01} which shows that even for finite relation algebras, the representability problem is undecidable. 

There have been a number of different generalisations of relation algebras proposed by various authors
(cf. non-associative relation algebras by Maddux~\cite{Mad82},  sequential relation algebras by Jipsen and Maddux~\cite{JipMad97}, skew relation algebras by Galatos and Jipsen~\cite{GJ13}).
One of the reasons to try and generalise the class of relation algebras is the fact that the variety of relation algebras does not have a decidable equational theory. 
A distinguishing feature of Galatos and Jipsen's quasi relation algebras~\cite{GJ13} is the fact that they are not required to have a Boolean lattice as their underlying lattice structure. Results  on the decidability for varieties of quasi relation algebras can be found in the paper where they were first described~\cite{GJ13}. 

We provide a construction of concrete distributive quasi relation algebras 
(DqRAs). Perhaps unsurprisingly, this construction begins with a partially ordered set (poset). 
The most general form of the construction considers an equivalence relation $E$ on a poset (Theorem~\ref{thm:main_result}). Specialising to the case that $E=X^2$ (Corollary~\ref{cor:FDqRA}) produces what we call \emph{full distributive quasi relation algebras}.  
(This terminology is in keeping with the tradition for relation algebras and weakening relation algebras~\cite[Section 6]{GJ20-AU}.)
With the construction in place, we 
give two equivalent definitions of a \emph{representable distributive quasi relation algebra} 
(Definition~\ref{def:RDqRA}).
Two definitions are required because our first construction, which uses a partially ordered equivalence relation, is closed under products, while the class of full DqRAs is not. 
Hence our two equivalent definitions mirror those used for the definition of $\mathsf{RRA}$.  

In Section~\ref{sec:qRAs} we present definitions and background on quasi relation algebras. Most of the content there is due to Galatos and Jipsen~\cite{GJ13}, 
but we also give a novel
method for constructing 
new quasi relations algebras from existing ones. We explore when these new algebras will be isomorphic to the original algebras (Proposition~\ref{prop:odd-periodic-iso}). In Section~\ref{sec:construct} we give the details of how to construct a DqRA from a poset satisfying certain symmetry conditions. Using this construction, in Section~\ref{sec:RDqRA} we  present our definition of representable distributive quasi relation algebras.  We denote this class of algebras by $\RDqRA$. 

We give a number of examples of representable DqRAs in
Section~\ref{sec:small-rep}.
There we also 
present one three-element and 
three four-element DqRAs 
for which we cannot find representations and nor can we prove that they are not representable. Indeed, the question of whether there exist non-representable DqRAs remains open. 

Our definition of representable DqRAs immediately gives rise to many interesting questions about distributive quasi relation algebras (and other related classes of algebras). 
We comment on these at the start of Section~\ref{sec:conclusions}
and then end the paper by presenting some open questions. One of these questions asks whether or not $\mathsf{RDqRA}$ forms a variety. 

\section{Quasi relation algebras}\label{sec:qRAs}

In their 2013 paper, Galatos and Jipsen~\cite{GJ13} studied algebras that are a generalisation of relation algebras, which they called \emph{quasi relation algebras} (qRAs). An important feature of this class of algebras, denoted $\qRA$, is that it has a decidable equational theory, whereas $\RA$ does not.  Before we give the definition of qRAs, recall that a \emph{residuated lattice} is an algebra 
$\langle A, \wedge, \vee, \cdot, 1,\backslash,/\rangle$ such that $\langle A, \wedge, \vee\rangle$ is a lattice, $\langle A, \cdot,1\rangle$ is a monoid and for all $a,b \in A$: 
$$a\cdot b \leqslant c \quad  \Longleftrightarrow \quad a \leqslant c/b  \quad \Longleftrightarrow \quad b \leqslant a \backslash c. $$ 

The definition of qRAs builds on that of 
FL-algebras (cf.~\cite[Chapter~2.2]{GJKO}). These algebras are used as algebraic semantics for Full Lambek calculus. 
An FL-algebra $\langle A,\wedge,\vee, \cdot, 1, \backslash,/,0\rangle$ is a residuated lattice with an additional constant $0$. No additional properties are assumed about the $0$; it can be any element of $A$. The residuals and $0$ are used to define two unary operations (known as \emph{linear negations}) as follows: $\sim a = a \backslash 0$ and $-a = 0 / a$. 
Involutive FL-algebras (or InFL-algebras) satisfy \textsf{(In)}, i.e. ${\sim}{-}a={-}{\sim}a=a$ for all $a \in A$. In an FL-algebra, the 
 binary operation $+$ is 
 defined by $a+b = {\sim}(-b\cdot - a)$.  
 
An FL$'$-algebra is an FL-algebra with a unary operation $'$ such that $a''=a$. 
A De Morgan FL$'$-algebra (or DmFL$'$-algebra) is an FL$'$-algebra satisfying $(a\vee b)'=a' \wedge b'$
(\textsf{Dm}). 
A \emph{quasi relation algebra} (qRA) is a DmFL$'$-algebra satisfying:
$$\mathsf{(Di)}\qquad  ({\sim} a)'=-(a')\qquad \qquad \mathsf{(Dp)}\qquad  (a\cdot b)' = a'+b'.$$

The abbreviations (\textsf{\textsf{Di}}) and (\textsf{Dp}) stand for De Morgan involution and De Morgan product, respectively. 

Galatos and Jipsen~\cite[Lemma 2.2]{GJ13} show that  another possible signature 
can be used for  InFL-algebras
(and hence quasi relation algebras). We will write $\mathbf{A}=\langle A, \wedge, \vee, \cdot, 1,0,{\sim},-\rangle$ for an FL-algebra. 

We note that if a De Morgan InFL$'$-algebra satisfies 
\textsf{(Dp)}, then it automatically satisfies \textsf{(Di)} and hence is a qRA (cf.~\cite[Lemma 1]{CJR24}).

Distributive quasi relation algebras (DqRAs) are those qRAs whose underlying lattice is distributive. They will be the focus of this paper from Section~\ref{sec:construct} onwards, but the results in this section apply to all qRAs. 

A qRA (or even an FL-algebra) is \emph{cyclic} if ${\sim} a = {-}a$. 
In Figure~\ref{fig:non-cyclic} we give two examples of non-cyclic qRAs.
Both of these algebras were found with the help of Prover9/Mace4~\cite{P9M4}.
We wish to highlight these as it was the task of being able to 
construct 
\emph{non-cyclic} 
distributive 
quasi relation algebras that proved most challenging.  The algebra on the left is the smallest non-cyclic distributive quasi relation algebra while the algebra on the right is the smallest non-cyclic non-distributive qRA. (There are three other non-cyclic non-distributive qRAs, only one of which is order isomorphic to the one below.) 

\begin{figure}[ht]
\centering
\begin{tikzpicture}[scale=0.65]
\begin{scope}[xshift=-0.7cm]
  \node[draw,circle,inner sep=1.5pt,fill] (bot) at (0,0) {};
  \node[unshaded] (d) at (-1,1) {};
  \node[unshaded] (e) at (1,1) {};
  \node[unshaded] (c) at (0,2) {};
  \node[draw,circle,inner sep=1.5pt,fill] (a) at (-1,3) {};
  \node[draw,circle,inner sep=1.5pt,fill] (b) at (1,3) {};
  \node[draw,circle,inner sep=1.5pt,fill] (top) at (0,4) {};
  \draw[order] (bot)--(d)--(c)--(a)--(top);
  \draw[order] (bot)--(e)--(c)--(b)--(top);
  \node[label,anchor=east,xshift=1pt] at (a) {${\sim}b=-a=a'$};
  \node[label,anchor=east,xshift=-1pt] at (c) {$c$};
  \node[label,anchor=east,xshift=1pt] at (d) {$a={--}b$};
  \node[label,anchor=west,xshift=-1pt] at (b) {${\sim}a=-b=b'$};
  \node[label,anchor=west,xshift=-1pt] at (e) {$b={\sim\sim}a$};
\node[] at (2.5,0.5) {$={--}a$};
\node[label,anchor=west,xshift=-1pt] at (bot) {$0$};
\node[label,anchor=west,xshift=-1pt] at (top) {$1$};

\end{scope}

\begin{scope}[xshift=8.5cm]
\node[draw,circle,inner sep=1.5pt,fill] (one) at (0,4) {};
\node[unshaded] (t) at (0,3) {};
\node[unshaded] (s) at (1,2) {};
\node[unshaded] (r) at (0,2) {};
\node[unshaded] (q) at (-1,2) {};
\node[unshaded] (p) at (0,1) {};
\node[draw,circle,inner sep=1.5pt,fill] (zero) at (0,0) {};
\draw[order] (zero)--(p)--(q)--(t)--(one);
\draw[order] (p)--(r)--(t)--(s)--(p);
\node[label,anchor=west,xshift=-1pt] at (p) {$p$};
\node[label,anchor=east,xshift=1pt] at (q) {$q$};
\node[label,anchor=west,xshift=-1pt] at (r) {$r$};
\node[label,anchor=west,xshift=-1pt,yshift=-1pt] at (s) {$s$};
\node[label,anchor=east,xshift=1pt] at (t) {$t$};
\node[label,anchor=west,xshift=-1pt] at (one) {$1$};
\node[label,anchor=west,xshift=-1pt] at (zero) {$0$};

\end{scope}

\begin{scope}[yshift=-5cm,xshift=-1.7cm]
\node (somenode) at (1,1){
\begin{tabular}{|c||c|c|c|c|c|  }
\hline $\cdot$ &$a$  &$b$  &$c$ &$a'$ &$b'$  \\\hline
\hline $a$ &  0& 0& 0&$a$ &0  \\
\hline $b$ & 0  &0 &0 &0 &$b$  \\
\hline $c$ &0  & 0&0 &$a$ &$b$  \\
\hline $a'$ & 0 & $b$ &$b$ &$a'$ &$b$  \\
\hline $b'$ &$a$  & 0&$a$ &$a$ &$b'$  \\
\hline
\end{tabular}};
\end{scope}
\begin{scope}[yshift=-5cm,xshift=7.4cm]
\node (somenode) at (1,1){
\begin{tabular}{|c||c|c|c|c|c|c|c|c|  }
\hline $\cdot$ &$p$  &$q$  &$r$ &$s$ &$t$& ${\sim}$ & ${-}$ & $'$  \\\hline
\hline $p$ & 0 &0 &0 &0 &0 &$t$ &$t$ &$t$  \\
\hline $q$ &0 &$p$ &$p$ &0 & $p$&$s$ &$r$ &$q$  \\
\hline $r$ & 0& 0&$p$ &$p$ & $p$&$q$ &$s$ &$s$  \\
\hline $s$ & 0&$p$ &0 &$p$ & $p$&$r$ &$q$ &$r$  \\
\hline $t$ & 0&$p$ &$p$ &$p$ &$p$ &$p$ &$p$ &$p$  \\
\hline
\end{tabular}};

\end{scope}
\end{tikzpicture}
\caption{Two non-cyclic quasi relation algebras.}\label{fig:non-cyclic}
\end{figure}

The following fact will be used frequently in later sections so we state it here as a lemma. 
\begin{lem}\cite[Section 2]{GJ13} \label{lem:dual-iso}
Let 
$\langle A,\wedge,\vee, \cdot, 1, 0, {\sim},{-}\rangle $ 
be an InFL-algebra. Then $\sim$ and $-$ are dual lattice isomorphisms. 
\end{lem}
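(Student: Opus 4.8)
The plan is to establish three things in turn: that $\sim$ and $-$ are order-reversing, that they are mutually inverse bijections of $A$, and that any order-reversing bijection whose inverse is again order-reversing is automatically a dual lattice isomorphism.

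First I would check antitonicity. Recall that $\sim a = a \backslash 0$ is, by residuation, the largest $x$ with $a\cdot x \le 0$, and $-a = 0/a$ is the largest $y$ with $y\cdot a \le 0$. If $a \le b$, then monotonicity of $\cdot$ gives $a\cdot({\sim}b) \le b\cdot({\sim}b) \le 0$, so ${\sim}b$ satisfies the defining inequality for ${\sim}a$, whence ${\sim}b \le {\sim}a$; the mirror argument with $(-b)\cdot a \le (-b)\cdot b \le 0$ gives $-b \le -a$. Thus both maps are order-reversing.

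Next, the hypothesis \textsf{(In)} says ${\sim}{-}a = {-}{\sim}a = a$ for all $a\in A$, so $\sim$ and $-$ are two-sided inverses of one another; in particular each is a bijection of $A$ onto itself, and the inverse of each is order-reversing by the previous paragraph.

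Finally I would invoke the following elementary fact: if $f\colon L \to L$ is an order-reversing bijection of a lattice whose inverse $f^{-1}$ is also order-reversing, then $f(a\vee b) = f(a)\wedge f(b)$ and $f(a\wedge b) = f(a)\vee f(b)$. Indeed, from $a,b \le a\vee b$ we get $f(a\vee b) \le f(a)\wedge f(b)$; conversely, setting $c = f(a)\wedge f(b)$, antitonicity of $f^{-1}$ yields $f^{-1}(c) \ge a$ and $f^{-1}(c) \ge b$, hence $f^{-1}(c) \ge a\vee b$ and therefore $c \le f(a\vee b)$. The meet identity is dual. Applying this with $f = {\sim}$ (inverse $-$) and with $f = -$ (inverse ${\sim}$) finishes the argument. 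I do not anticipate a real obstacle; the only subtlety is that being an order-reversing bijection does not by itself force the dual-lattice identities — one genuinely needs the inverse to be order-reversing too — and this is precisely what \textsf{(In)} provides.
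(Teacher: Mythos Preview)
Your proof is correct. The paper does not actually supply a proof of this lemma; it merely records it as a known fact with a citation to \cite[Section 2]{GJ13}. Your argument via antitonicity of the residuation-defined maps, mutual invertibility from \textsf{(In)}, and the standard observation that an order-reversing bijection with order-reversing inverse is a dual lattice isomorphism is exactly the expected verification and fills in what the paper leaves to the reference.
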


Given an FL-algebra $\mathbf{A}$ and $a \in A$, we define for $n \in \omega$: 
$$ {\sim}^n a := \underbrace{\sim \sim  \dots \sim}_{n \text{ times}}a \quad \text{and} \quad 
{-}^n a  := \underbrace{- - \dots - }_{n \text{ times}}a. $$ 

The lemma below can be proved using a simple inductive argument. 

\begin{lem}\label{lem:nDi}
Let $\mathbf{A}=\langle A, \wedge, \vee, \cdot, 1,0,{\sim},-,' \rangle$ be an InFL$'$-algebra satisfying
{\upshape (\textsf{Di})}. Then the following hold for any  $n \in \omega$ and $a \in A$:
$$ {\sim}^n\left(a'\right)=\left({-}^n a\right)' \quad \text{and} \quad {-}^n\left(a'\right)=\left({\sim}^n a\right)'.$$ 
\end{lem}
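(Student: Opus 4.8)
The plan is to prove the two identities together by a single induction on $n$, using only the axiom \textsf{(Di)}, i.e. $({\sim}a)'={-}(a')$, together with its mirror image $({-}a)'={\sim}(a')$, which (as noted just before the statement) holds in every InFL$'$-algebra satisfying \textsf{(Di)}, the involution $a''=a$ being what makes the two forms equivalent.

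For the base case $n=0$ both equations reduce to $a'=a'$, so there is nothing to prove. For the inductive step, assume ${\sim}^n(a')=({-}^n a)'$ and ${-}^n(a')=({\sim}^n a)'$ for every $a\in A$. Then ${\sim}^{n+1}(a')={\sim}\bigl({\sim}^n(a')\bigr)={\sim}\bigl(({-}^n a)'\bigr)=\bigl({-}({-}^n a)\bigr)'=({-}^{n+1}a)'$, where the first induction hypothesis is used in the second equality and the rule $({-}b)'={\sim}(b')$ with $b={-}^n a$ in the third. Dually, ${-}^{n+1}(a')={-}\bigl({-}^n(a')\bigr)={-}\bigl(({\sim}^n a)'\bigr)=\bigl({\sim}({\sim}^n a)\bigr)'=({\sim}^{n+1}a)'$, where the second induction hypothesis is used in the second equality and \textsf{(Di)} with $b={\sim}^n a$ in the third. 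This closes the induction.

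There is no genuine obstacle here; the only point requiring care is that at the $(n{+}1)$-st stage the inner term is ${-}^n a$ in the first identity but ${\sim}^n a$ in the second, so one must invoke the correct one of the two dual rewriting rules in each case. No hypotheses on the algebra beyond $a''=a$ and \textsf{(Di)} are needed.
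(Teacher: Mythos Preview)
Your proof is correct and matches the paper's intended approach: the paper simply states that the lemma ``can be proved using a simple inductive argument,'' and your induction on $n$ using \textsf{(Di)} together with its mirror image $(-a)'={\sim}(a')$ (which the paper establishes just before the lemma) is exactly that argument.
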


By~\cite[Lemma 4.1]{GJ13}, in a qRA we get: ${-}{\sim} a =a= {\sim}{-}a$, 
and also
${-}0=1={\sim} 0$.
Hence Lemma~\ref{lem:nDi} holds in any qRA, in particular: $(-a)'={\sim} (a')$.

The next lemma will be used in the proof of Propositions~\ref{prop:cyclic-elmts} and~\ref{prop:odd-periodic-iso}. 
As noted by Galatos and Jipsen~\cite[p.5]{GJ13} (see also~\cite[Lemma 3.17]{GJKO}), 
in an InFL-algebra, there are two equivalent 
ways of expressing the $+$ operation: $ a+b= {\sim}(-b\cdot -a)=-({\sim}b\cdot {\sim}a)$.
This fact will be used in the proof below. 

\begin{lem}\label{lem:doubleminuscdot}
Let $\mathbf{A}=\langle A,\wedge, \vee, \cdot, 1, 0,  \sim,-\rangle$ be a InFL-algebra. 
For any 
$n \in \mathbb{Z}^+$
and $a,b \in A$ we have 
$${\sim}^{2n}(a\cdot b)={\sim}^{2n}a\cdot {\sim}^{2n}b \quad\text{and}\quad
{-}^{2n}(a\cdot b)={-}^{2n}a\cdot {-}^{2n}b.
$$
\end{lem}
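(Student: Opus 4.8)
The plan is to deduce both identities from a single fact: in an InFL-algebra the linear negation ${\sim}$ interchanges $\cdot$ and $+$ while reversing the order of the arguments. For this I will only use the two descriptions $a+b={\sim}({-}b\cdot{-}a)={-}({\sim}b\cdot{\sim}a)$ recorded just above the statement, together with \textsf{(In)}; none of the qRA-specific axioms \textsf{(Di)}, \textsf{(Dp)}, \textsf{(Dm)} nor the operation $'$ will be needed, so in fact the lemma holds in every InFL-algebra. First I would establish the ``De Morgan'' identities
$${\sim}(x\cdot y)={\sim}y+{\sim}x\qquad\text{and}\qquad{\sim}(x+y)={\sim}y\cdot{\sim}x$$
for all $x,y\in A$. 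For the first, \textsf{(In)} lets us write $x={-}({\sim}x)$ and $y={-}({\sim}y)$, so that $x\cdot y={-}b\cdot{-}a$ with $b={\sim}x$ and $a={\sim}y$; hence ${\sim}(x\cdot y)={\sim}({-}b\cdot{-}a)=a+b={\sim}y+{\sim}x$ by the definition of $+$. For the second, apply ${\sim}$ to $x+y={-}({\sim}y\cdot{\sim}x)$ and use that ${\sim}({-}z)=z$. The two analogous identities with ${-}$ in place of ${\sim}$ follow by the same argument, or simply by the symmetry of all the hypotheses under swapping ${\sim}\leftrightarrow{-}$.

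With these in hand, the case $n=1$ is immediate: applying the first identity and then the second,
$${\sim}^{2}(a\cdot b)={\sim}\bigl({\sim}b+{\sim}a\bigr)={\sim}({\sim}a)\cdot{\sim}({\sim}b)={\sim}^{2}a\cdot{\sim}^{2}b,$$
and symmetrically ${-}^{2}(a\cdot b)={-}^{2}a\cdot{-}^{2}b$. Note that two applications are genuinely required: a single ${\sim}$ turns $\cdot$ into $+$ and swaps the arguments, and only the second application undoes both effects, which is precisely why the statement concerns the even powers ${\sim}^{2n}$. The general case then follows by induction on $n$: assuming ${\sim}^{2n}(a\cdot b)={\sim}^{2n}a\cdot{\sim}^{2n}b$ for all $a,b\in A$ and applying the $n=1$ case to the pair ${\sim}^{2n}a,{\sim}^{2n}b$,
$${\sim}^{2(n+1)}(a\cdot b)={\sim}^{2}\bigl({\sim}^{2n}(a\cdot b)\bigr)={\sim}^{2}\bigl({\sim}^{2n}a\cdot{\sim}^{2n}b\bigr)={\sim}^{2(n+1)}a\cdot{\sim}^{2(n+1)}b,$$
and likewise for ${-}$.

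I do not expect a genuine obstacle here; the only points requiring a little care are the order reversal in ${\sim}(x\cdot y)={\sim}y+{\sim}x$ (so that odd powers of ${\sim}$ do \emph{not} distribute over $\cdot$) and, in the non-cyclic setting, the fact that one may cancel the composites ${\sim}{-}$ and ${-}{\sim}$ but neither ${\sim}{\sim}$ nor ${-}{-}$. If one prefers, the two De Morgan identities can be cited as standard facts about InFL-algebras rather than rederived, which compresses the whole proof to the two displayed computations plus the one-line induction.
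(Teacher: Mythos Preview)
Your proof is correct and follows essentially the same route as the paper: both arguments derive the base case $n=1$ from \textsf{(In)} together with the two equivalent expressions for $a+b$, and then close with the obvious induction. The only difference is organizational---you isolate the two De Morgan-type identities ${\sim}(x\cdot y)={\sim}y+{\sim}x$ and ${\sim}(x+y)={\sim}y\cdot{\sim}x$ before combining them, whereas the paper runs the same steps in a single chain---and your remark that only the InFL axioms are used (not \textsf{(Dm)}, \textsf{(Di)}, \textsf{(Dp)} or $'$) is a correct and worthwhile observation.
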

\begin{proof} 
We proceed by induction
on $n$ for the equality using 
${\sim}^{2n}$. 
The case of 
$n=1$ uses \textsf{(In)} and the definition of $a+b$. We get 
$${\sim}{\sim}(a \cdot b) = {\sim} ({\sim} b + {\sim}a)={\sim}(-({\sim}{\sim}a \cdot {\sim}{\sim}b))={\sim}{\sim}a \cdot {\sim}{\sim}b.$$
Now assume that 
${\sim}^{2k}(a\cdot b)={\sim}^{2k}a\cdot {\sim}^{2k}b$ for some $k\geqslant 1$. It follows that 
${\sim}^{2(k+1)}(a\cdot b)=
{\sim}^{2(k+1)}a\cdot {\sim}^{2(k+1)}b$. 
The proof for ${-}^{2n}(a\cdot b)={-}^{2n}a\cdot {-}^{2n}b$ is similar but will use the other definition of $a+b$ for the case of $n=1$.  
\end{proof}

With Lemma~\ref{lem:doubleminuscdot} in hand, we are able to prove two interesting results which give us new quasi relation algebras from existing ones. 
\begin{prop}\label{prop:cyclic-elmts} 
Let $\mathbf{A}=\langle A, \wedge,\vee, \cdot, 1,0,\sim,-,'\rangle$ be a DiInFL$'$-algebra. The set of elements $C=\{\, a \in A \mid {\sim}a={-}a\,\}$ forms a subuniverse of $\mathbf{A}$ and the resulting subalgebra is a cyclic DiInFL$'$-algebra. 
\end{prop}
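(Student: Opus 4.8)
The plan is to verify directly that $C$ is a subuniverse of $\mathbf{A}$. Once that is done, the induced subalgebra $\mathbf{C}$ is automatically a member of $\qRA$, since in the Galatos--Jipsen signature $\langle \wedge,\vee,\cdot,1,0,{\sim},-,'\rangle$ the class $\qRA$ is equationally definable and hence closed under subalgebras; and $\mathbf{C}$ is cyclic by the very definition of $C$. So everything reduces to showing $0,1\in C$ and that $C$ is closed under $\wedge$, $\vee$, $\cdot$, ${\sim}$, $-$ and $'$.

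The key preliminary observation, which is what makes the multiplicative case work, is the reformulation
$$C=\{\,a\in A\mid {\sim}^2 a=a\,\}=\{\,a\in A\mid {-}^2 a=a\,\}.$$
Indeed, if $a\in C$, i.e.\ ${\sim}a=-a$, then ${\sim}^2 a={\sim}(-a)={\sim}{-}a=a$ and ${-}^2 a=-({\sim}a)={-}{\sim}a=a$ by \textsf{(In)}; conversely, if ${\sim}^2 a=a$ then $-a=-({\sim}^2 a)={-}{\sim}({\sim}a)={\sim}a$ by \textsf{(In)}, so $a\in C$, and similarly ${-}^2 a=a$ forces $a\in C$.

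For the routine closures: the paper has already recorded ${\sim}0=1={-}0$, so $0\in C$, and standard residuation gives ${\sim}1=1\backslash 0=0=0/1={-}1$, so $1\in C$. If $a,b\in C$, then by Lemma~\ref{lem:dual-iso} we have ${\sim}(a\wedge b)={\sim}a\vee{\sim}b=-a\vee -b=-(a\wedge b)$, and dually for joins, so $C$ is closed under $\wedge$ and $\vee$. If $a\in C$, put $b={\sim}a=-a$; then ${\sim}b={\sim}(-a)={\sim}{-}a=a$ and $-b=-({\sim}a)={-}{\sim}a=a$ by \textsf{(In)}, so ${\sim}b=-b$, i.e.\ ${\sim}a=-a=b\in C$, giving closure under ${\sim}$ and $-$. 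Finally, if $a\in C$, then \textsf{(Di)} gives $({\sim}a)'=-(a')$ while the consequence of \textsf{(Di)} recorded just after the definition of a qRA gives $(-a)'={\sim}(a')$; since ${\sim}a=-a$, comparing these two equalities yields ${\sim}(a')=-(a')$, so $a'\in C$.

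The one substantial step, and the one I expect to be the main obstacle, is closure under $\cdot$: a product of cyclic elements need not be cyclic in a bare InFL$'$-algebra, and the point of Lemma~\ref{lem:doubleminuscdot} is precisely that the extra structure of a qRA rescues this. Applying that lemma with $n=1$: if $a,b\in C$ then ${\sim}^2 a=a$ and ${\sim}^2 b=b$ by the reformulation, so
$${\sim}^2(a\cdot b)={\sim}^2 a\cdot{\sim}^2 b=a\cdot b,$$
and the reformulation then gives $a\cdot b\in C$. Hence $C$ is a subuniverse, $\mathbf{C}$ is a qRA, and ${\sim}a=-a$ for every $a\in C$ makes it cyclic, which completes the argument.
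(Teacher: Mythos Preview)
Your proof is correct and follows essentially the same approach as the paper: the lattice, constant, and $'$ closures are handled identically, and the crucial closure under $\cdot$ relies on Lemma~\ref{lem:doubleminuscdot} in both. Your explicit reformulation $C=\{a\mid {\sim}^2 a=a\}$ is a clean way to package what the paper uses implicitly in its chain ${\sim}(a\cdot b)={-}{\sim}{\sim}(a\cdot b)={-}({\sim}{\sim}a\cdot{\sim}{\sim}b)={-}(a\cdot b)$, and you additionally spell out closure under ${\sim}$ and $-$, which the paper leaves tacit.
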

\begin{proof}
First we observe that $0,1 \in C$. 
Let $a,b \in C$. We have ${\sim}(a\wedge b) = {\sim}a\vee {\sim} b = {-}a\vee {-}b= {-}(a \wedge b)$. The proof that $a \vee b\in C$ is dual. Now 
$${\sim}(a\cdot b) \,\stackrel{\textsf{(In)}}{=}\, {-}{\sim}{\sim}(a\cdot b) 
\,\stackrel{(\ref{lem:doubleminuscdot})}{=}\, {-}({\sim}{\sim}a \cdot {\sim}{\sim}b)={-}({\sim}{-}a\cdot {\sim}{-}b)={-}(a\cdot b).$$
We use both versions of  \textsf{(Di)} to see that ${\sim}(a')=({-}a)'=({\sim}a)'={-}(a')$. It is clear that the algebra with underlying set $C$ will be cyclic. 
\end{proof} 

The following new result demonstrates that, given a qRA, it is possible to define alternative unary operations that can play the role of the $'$ operation. We will then investigate when these new algebras are isomorphic to the original qRA. 

Let $\mathbf{A}=\langle A,\wedge, \vee, \cdot, 1,0, \sim,-,' \rangle$ be a qRA. Define the following unary operations for $a \in A$ and $n \in \omega$:
$$ 
a^{\triangledown n}:= {\sim}^{2n}(a') \quad\text{and}\quad 
a^{\vartriangle n}:= {-}^{2n}(a').
$$ 

\begin{thm}\label{thm:star-qRA} 
Let $\mathbf{A}=\langle A,\wedge, \vee, \cdot, 1,0, \sim,-,' \rangle$ be a qRA. 
Then the structures $\mathbf{A}^{\triangledown n}=\langle A,\wedge,\vee,\cdot, 1, 0,\sim,-,^{\triangledown n}\rangle $ and
$\mathbf{A}^{\vartriangle n} = \langle A,\wedge,\vee,\cdot, 1,0,\sim,-,^{\vartriangle n}\rangle $ are quasi relation algebras. 
\end{thm}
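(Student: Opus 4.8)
The plan is to verify only those qRA axioms that mention the operation $'$, since $\mathbf{A}^{\triangledown n}$ and $\mathbf{A}^{\vartriangle n}$ have exactly the same FL-algebra reduct $\langle A,\wedge,\vee,\cdot,1,0,{\sim},-\rangle$ as $\mathbf{A}$. Writing $^{\tdn}$ for $^{\triangledown n}$, the four things to check are: that $^{\tdn}$ is an involution, $\bigl(a^{\tdn}\bigr)^{\tdn}=a$ (the extra FL$'$ requirement); \textsf{(Dm)}, $(a\j b)^{\tdn}=a^{\tdn}\m b^{\tdn}$; \textsf{(Di)}, $({\sim}a)^{\tdn}=-\bigl(a^{\tdn}\bigr)$; and \textsf{(Dp)}, $(a\cdot b)^{\tdn}=a^{\tdn}+b^{\tdn}$. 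When $n=0$ we have $a^{\tdn}=a'$, so $\mathbf{A}^{\tdn}=\mathbf{A}$ and there is nothing to do; hence I assume $n\geqslant 1$. The preliminary facts I would record first are: Lemma~\ref{lem:nDi}, giving ${\sim}^{2n}(a')=\bigl({-}^{2n}a\bigr)'$; Lemma~\ref{lem:dual-iso}, which makes ${\sim}^{2n}$, being an even power of a dual lattice isomorphism, a lattice \emph{automorphism}; the identities \textsf{(In)}, ${\sim}{-}a=a={-}{\sim}a$, which yield both ${\sim}^{2n}\circ{-}^{2n}=\mathrm{id}$ and the commutation ${\sim}^{2n}\circ{-}={\sim}^{2n-1}={-}\circ{\sim}^{2n}$; and Lemma~\ref{lem:doubleminuscdot}, giving ${\sim}^{2n}(x\cdot y)={\sim}^{2n}x\cdot{\sim}^{2n}y$.

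The involution law and \textsf{(Dm)} are then immediate. For the former, Lemma~\ref{lem:nDi} rewrites $a^{\tdn}$ as $\bigl({-}^{2n}a\bigr)'$, so $\bigl(a^{\tdn}\bigr)'={-}^{2n}a$ by $a''=a$, and applying ${\sim}^{2n}$ once more gives $\bigl(a^{\tdn}\bigr)^{\tdn}={\sim}^{2n}\bigl({-}^{2n}a\bigr)=a$. For \textsf{(Dm)}, $(a\j b)^{\tdn}={\sim}^{2n}\bigl((a\j b)'\bigr)={\sim}^{2n}(a'\m b')$ by \textsf{(Dm)} for $'$, and this equals ${\sim}^{2n}(a')\m{\sim}^{2n}(b')=a^{\tdn}\m b^{\tdn}$ because ${\sim}^{2n}$ preserves meets.

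For \textsf{(Di)}, I would compute $({\sim}a)^{\tdn}={\sim}^{2n}\bigl(({\sim}a)'\bigr)={\sim}^{2n}\bigl(-(a')\bigr)=-\bigl({\sim}^{2n}(a')\bigr)=-\bigl(a^{\tdn}\bigr)$, using \textsf{(Di)} for $'$ and the commutation of ${\sim}^{2n}$ with $-$. For \textsf{(Dp)} I would use the two expressions $x+y={\sim}(-y\cdot -x)=-({\sim}y\cdot{\sim}x)$ recalled before Lemma~\ref{lem:doubleminuscdot}: from $(a\cdot b)^{\tdn}={\sim}^{2n}(a'+b')={\sim}^{2n}\bigl(-({\sim}b'\cdot{\sim}a')\bigr)$, move ${\sim}^{2n}$ inward past $-$ and then through the product via Lemma~\ref{lem:doubleminuscdot}, and re-fold with $u+v=-({\sim}v\cdot{\sim}u)$ at $u={\sim}^{2n}(a')$, $v={\sim}^{2n}(b')$ to land on $a^{\tdn}+b^{\tdn}$. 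The statements for $^{\vtn}$ are obtained by running the identical argument with the roles of ${\sim}$ and $-$ interchanged; all of the facts invoked are symmetric under that swap (Lemmas~\ref{lem:nDi}, \ref{lem:dual-iso}, \ref{lem:doubleminuscdot}, the identities ${\sim}{-}a=a={-}{\sim}a$, the two forms of $+$, and \textsf{(Di)}, which the text notes is equivalent to $(-a)'={\sim}(a')$). I expect \textsf{(Dp)} to be the only step needing genuine care, since it is the one place where both forms of $+$ and Lemma~\ref{lem:doubleminuscdot} must be combined; everything else collapses to a one-line manipulation once the preliminary observations above are in place.
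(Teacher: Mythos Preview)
Your proposal is correct. The argument differs from the paper's mainly in organisation: the paper sets up an induction on $n$, while you give a direct proof for arbitrary $n$ by appealing to the already-established Lemmas~\ref{lem:nDi} and~\ref{lem:doubleminuscdot} (which themselves absorb the induction). In practice the two proofs are very close---the paper's verification of the involution law, \textsf{(Dm)}, and \textsf{(Dp)} never actually invokes the inductive hypothesis and is essentially your computation (with the other form of $+$ in \textsf{(Dp)}). The one genuine difference is \textsf{(Di)}: the paper reduces $({\sim}a)^{\triangledown(k+1)}$ to ${\sim}{\sim}({\sim}a)^{\triangledown k}$ and then uses the hypothesis, whereas you bypass this by the commutation ${\sim}^{2n}\circ{-}={-}\circ{\sim}^{2n}$, which follows immediately from \textsf{(In)}. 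Your route is slightly cleaner for that step; the paper's inductive framing perhaps makes the relationship between $^{\triangledown(k+1)}$ and $^{\triangledown k}$ more visible, which is relevant later in Proposition~\ref{prop:odd-periodic-iso}.
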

\begin{proof} We will consider only $\mathbf{A}^{\triangledown n}$. The corresponding results for $\mathbf{A}^{\vartriangle n}$ will follow using the (\textsf{Di}) condition. Our proof proceeds by induction on $n$. When $n=0$ then $a^\tdn=a'$ and so $^\tdn$ is involutive, a dual lattice isomorphism, and (\textsf{Di}) and (\textsf{Dp)} are both satisfied. We assume that these four properties hold for $k \in \omega$. 
Now, using Lemma~\ref{lem:nDi}, we can show that $^{\triangledown(k+1)}$ is involutive: 
\begin{align*}
\left(a^{\triangledown (k+1)}\right)^{\triangledown (k+1)}&=
{\sim}^{2(k+1)}\left[ \left( {\sim}^{2(k+1)}(a')\right)'\right]\\
&=\left[ {-}^{2(k+1)}{\sim}^{2(k+1)}(a')\right]'
\,\stackrel{\textsf{(In)}}{=}\,
a''=a.
\end{align*}
To show that $^{\triangledown(k+1)}$ is a dual lattice isomorphism we use the fact that any even number of applications of $\sim$ 
will be a lattice isomorphism:
\begin{align*}
(a\vee b)^{\triangledown (k+1)}={\sim}^{2(k+1)}((a\vee b)')&= {\sim}^{2(k+1)}(a'\wedge b')\\ &=
{\sim}^{2(k+1)}(a')\wedge {\sim}^{2(k+1)}( b')\\&=
a^{\triangledown (k+1)} \wedge 
b^{\triangledown (k+1)}.
\end{align*}
For (\textsf{Di}), note 
$({\sim} a)^{\triangledown (k+1)}={\sim}^{2(k+1)}(({\sim} a)' )
=(\sim\sim){\sim}^{2k}(({\sim} a)' )
={\sim}{\sim}({\sim} a)^{\triangledown k}$. 
Now, since \textsf{(Di)} and \textsf{(In)} hold for $n=k$ we get 
$$(\sim a)^{\triangledown (k+1)}=(\sim\sim)(-(a^{\triangledown k}))
= - (\sim \sim) (a^{\triangledown k}).$$ 
By the definition of $^{\triangledown k}$, this last expression is equal to $- (\sim \sim) {\sim}^{2k}(a') 
= -{\sim}^{2(k+1)}(a') 
= - (a^{\triangledown (k+1)})$.

Lastly, we show that (\textsf{Dp}) 
holds on $\mathbf{A}^{\triangledown(k+1)}$:
\begin{align*}
(a \cdot b)^{\triangledown (k+1)} = {\sim}^{2(k+1)}((a\cdot b)')
&\stackrel{\textsf{(Dp)}}{=}{\sim}^{2(k+1)}(a'+b')\\
&\,= {\sim}^{2(k+1)}(\sim (-(b')\cdot - (a')))\\
&\stackrel{\textsf{(\ref{lem:doubleminuscdot})}}{=}
{\sim}\left( {\sim}^{2(k+1)}(-(b')) \cdot {\sim}^{2(k+1)} (-(a'))\right)\\
&\stackrel{\textsf{(In)}}{=}
{\sim}\left( -{\sim}^{2(k+1)}(b') \cdot -{\sim}^{2(k+1)}(a')\right)\\
&\,= {\sim}\left( - \left(b^{\triangledown (k+1)} \right)\cdot - \left(a^{\triangledown (k+1)}\right)\right)\\
&\,= a^{\triangledown (k+1)} + b^{\triangledown (k+1)}. & \tag*{\qedhere}
\end{align*}
\end{proof}

Galatos and Jipsen~\cite{GJ12-plog} defined an involutive FL-algebra to be \emph{$n$-periodic} if $n$ is the smallest 
positive integer
such that 
${\sim}^n a={-}^n a$ 
for all $a \in A$. 
Theorem~\ref{thm:star-qRA} showed
that for a qRA $\mathbf{A}$, there are many other qRAs with the same underlying InFL-algebra structure. Below we see that in some cases these ``new'' qRAs are in fact isomorphic to the original algebra.  

\begin{prop}\label{prop:odd-periodic-iso}
Let $\mathbf{A}=\langle A, \wedge, \vee, \cdot, 1,0,\sim,-, '\rangle$ be a quasi relation algebra that is $n$-periodic for some odd natural number $n$. 
\begin{enumerate}[\normalfont (i)]
\item If $m\leqslant n$ and $m$ is odd, then $\mathbf{A} \cong \mathbf{A}^{\triangledown m}$ via the map  
$a \mapsto {-}^{n-m}a$ and $\mathbf{A}\cong \mathbf{A}^{\vartriangle m}$ via the map $a \mapsto {\sim}^{n-m}a$.  
\item If $m>n$ and $m$ is odd, then $\mathbf{A} \cong \mathbf{A}^{\triangledown m}$ via the map  
$a \mapsto {\sim}^{m-n}a$ and $\mathbf{A}\cong \mathbf{A}^{\vartriangle m}$ via the map $a \mapsto {-}^{m-n}a$.  
\end{enumerate}
\end{prop}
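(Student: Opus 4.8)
The algebras $\mathbf{A}$, $\mathbf{A}^{\triangledown m}$ and $\mathbf{A}^{\vartriangle m}$ all share the same InFL-algebra reduct $\langle A, \wedge, \vee, \cdot, 1, 0, \sim, -\rangle$; they differ only in the unary operation playing the role of $'$. So the plan is to show that each of the four maps in the statement is an automorphism of this common reduct which, in addition, carries $'$ to the appropriate new operation. Since a bijective homomorphism between algebras of the same type is automatically an isomorphism, this is enough.

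First I would record the consequences of odd $n$-periodicity. By definition ${\sim}^n a = {-}^n a$ for every $a$, i.e. ${\sim}^n = {-}^n$ as self-maps of $A$; since \textsf{(In)} makes $\sim$ and $-$ mutually inverse bijections that commute with one another, cancelling $n$ copies of ${\sim}{-}={-}{\sim}=\mathrm{id}$ yields ${\sim}^{2n} = {-}^{2n} = \mathrm{id}_A$. (Oddness of $n$ plays no role here; it is needed only in the next step.) Next, in each of the four cases the candidate map $\varphi$ has the form $\sigma^{2j}$ with $\sigma \in \{\sim, -\}$ and $2j$ equal to $n-m$ or $m-n$, which is a non-negative even integer exactly because $m$ and $n$ are both odd. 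An even power of $\sim$ or of $-$ is a bijection; it is a lattice automorphism, being an even composite of the dual lattice isomorphisms of Lemma~\ref{lem:dual-iso}; it preserves $\cdot$ by Lemma~\ref{lem:doubleminuscdot}; it fixes $1$ and $0$ (from ${\sim}0={-}0=1$ and \textsf{(In)} one gets ${\sim}1={-}1=0$, so $\sigma^{2j}$ fixes both $0$ and $1$); and it commutes with both $\sim$ and $-$ because $\sim$ and $-$ commute. Hence $\varphi$ is an automorphism of the common InFL-reduct.

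It then remains, in each case, to check that $\varphi$ intertwines $'$ with the relevant new operation. Take the first map of (i): with $\varphi = {-}^{n-m}$ one has $\varphi(a)^{\triangledown m} = {\sim}^{2m}\bigl((\,{-}^{n-m}a\,)'\bigr) = {\sim}^{2m}\,{\sim}^{n-m}(a') = {\sim}^{n+m}(a')$, using Lemma~\ref{lem:nDi}, while $\varphi(a') = {-}^{n-m}(a')$. Reading ${-}^{n-m}$ as ${\sim}^{-(n-m)}$, these two agree because the exponents differ by $2n$ and ${\sim}^{2n}=\mathrm{id}$. The other three maps are handled identically: after pushing $'$ through the power of $\sim$ or $-$ with Lemma~\ref{lem:nDi} and collapsing, each side becomes $\tau^{k}(a')$ for a fixed $\tau\in\{\sim,-\}$, the two values of $k$ differing by $\pm 2n$, so equality follows from ${\sim}^{2n}={-}^{2n}=\mathrm{id}$. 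I expect this final step to be the only place demanding attention, since one must carefully read negative powers of $\sim$ as positive powers of $-$ (and vice versa) and invoke ${\sim}^{2n}={-}^{2n}=\mathrm{id}$ with the correct exponent in each of the four cases; beyond this bookkeeping there is no real obstacle.
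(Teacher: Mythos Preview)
Your proposal is correct and follows essentially the same route as the paper: verify that an even power of $\sim$ (or $-$) is an automorphism of the common InFL-reduct via Lemmas~\ref{lem:dual-iso} and~\ref{lem:doubleminuscdot}, then use Lemma~\ref{lem:nDi} to push $'$ through and match exponents. The only cosmetic difference is that you first record the consequence ${\sim}^{2n}={-}^{2n}=\mathrm{id}_A$ of $n$-periodicity and then appeal to it uniformly, whereas the paper invokes ${\sim}^n={-}^n$ directly in the final cancellation step of each case; these are the same manipulation.
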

\begin{proof}
Assume $\mathbf{A}$ is $n$-periodic for 
$n$ odd, 
and let $m$ be odd with $m \leqslant n$. Defining $h(a):={-}^{n-m}a$ we see that since $-$ is a dual lattice isomorphism and since $h$ is an even number of applications of $-$, it will be a lattice isomorphism. Further, by Lemma~\ref{lem:doubleminuscdot}, $h$ will preserve the monoid operation. 
Clearly $h$ preserves the constants 
and hence also both $-$ and ${\sim}$.
Lastly, 
\begin{align*}
(h(a))^{\triangledown m} = 
{\sim}^{2m}\left( \left[ h(a) \right]'\right) &\;=
{\sim}^{2m}\left[\left({-}^{n-m}a\right)'\right]\\
&\stackrel{(\ref{lem:nDi})}{=} 
{\sim}^{2m}{\sim}^{n-m}(a') \\ 
&\;= {\sim}^n{\sim}^m (a')= {-}^n{\sim}^m(a') = {-}^{n-m}(a')= h(a').
\end{align*}

The proof that $\mathbf{A}\cong \mathbf{A}^{\vartriangle m}$ in part (i) is similar. For part (ii), we only prove the fact that $h(a)={\sim}^{m-n}a$ preserves the $'$ operation: 
\begin{align*}
\left(h(a)\right)^{\triangledown m}  = {\sim}^{2m}\left[h(a)\right]' & = {\sim}^{2m}\left[\left({\sim}^{m-n}a\right)'\right]\\
&\stackrel{(\ref{lem:nDi})}{=} {\sim}^{2m}{-}^{m-n}(a') \\
& = {\sim}^{m+ n}(a')\\
& = {\sim}^{m -n + n + n}(a')\\
& = {\sim}^{m- n}{\sim}^n{\sim}^n(a')\\
&= {\sim}^{m- n}{\sim}^n {-}^n (a')  = {\sim}^{m-n}(a') = h(a'). && \qedhere
\end{align*}
\end{proof}

The non-distributive qRA on the right of Figure~\ref{fig:non-cyclic} is an example of an odd-periodic qRA to which Proposition~\ref{prop:odd-periodic-iso} can be applied. The 2-periodic (distributive) qRA on the left of Figure~\ref{fig:non-cyclic} is 
an even-periodic qRA for which $\mathbf{A}\ncong \mathbf{A}^{\triangledown 1}$ 
(this can be checked by considering the four possible lattice isomorphisms). 
Using Mace4~\cite{P9M4},  we have 
found even periodic quasi relation algebras for which $\mathbf{A}\cong \mathbf{A}^{\triangledown 1}$. We discuss this further in Section~\ref{sec:conclusions}.

\section{Concrete distributive quasi relation algebras}\label{sec:construct} 

Before we give the construction of a distributive quasi relation algebra from a poset, we recall some definitions and results about binary relations.
We will also prove a few lemmas that will be used later to prove the main results. 

\subsection{Binary relations}

We will use the following special binary relations on a set $X$: the \emph{empty relation} $\varnothing$, 
the \emph{identity relation} (\emph{diagonal relation}) $\text{id}_X = \left\{\,\left(x, x\right) \mid x \in X\,\right\}$,
and the \emph{universal relation} $X^2$. 
In what follows we will write $R^c$ to denote the complement of $R$ in a binary relation $U\subseteq X^2$.  
The \emph{converse of $R$} is 
$R^\smile = \left\{\,\left(x, y\right) \mid \left(y, x\right) \in R\,\right\}$
and the \emph{composition} of $R, S \subseteq X^2$ is 
$R\mathbin{;} S = \left\{\,\left(x, y\right) \mid \left(\exists z \in X\right)\left(\left(x, z\right) \in R \textnormal{ and } \left(z, y\right) \in S\right)\,\right\}$.
We also define $R^0 = \text{id}_X$ and $R^{n+1} = R^n\mathbin{;} R$ for $n \in \omega$.

\begin{prop}\label{prop:properties_binary_relations}
Let $R, S, T, U$ be binary relations on a set $X$ such that $R, S, T \subseteq U$. Further, let $I$ be an index set and for each $i\in I$, assume $R_i \subseteq U$. Then the following hold:
\begin{multicols}{2}
\begin{enumerate}[\normalfont (i)]
\item $\left(R^\smile\right)^\smile = R$
\item $\left(R^\smile\right)^c = \left(R^c\right)^\smile$
\item $\left(\displaystyle \bigcup_{i\in I} R_i\right)^\smile = \displaystyle\bigcup_{i\in I}R_i^\smile$ 
\item $\left(\displaystyle \bigcap_{i\in I} R_i\right)^\smile = \displaystyle\bigcap_{i\in I}R_i^\smile$ 
\item $\mathrm{id}_X\mathbin{;} R = R\mathbin{;} \mathrm{id}_X = R$
\item $\left(R\mathbin{;} S\right)\mathbin{;}T = R\mathbin{;} \left(S\mathbin{;} T\right)$
\item $\left(R\mathbin{;} S\right)^\smile = S^\smile\mathbin{;} R^\smile$
\item $\displaystyle \bigcup_{i \in I}R_i\mathbin{;} S = \bigcup_{i\in I}\left(R_i\mathbin{;} S\right)$
\item $\displaystyle S\mathbin{;} \bigcup_{i\in I}R_i = \bigcup_{i\in I}\left(S\mathbin{;} R_i\right)$
\end{enumerate}
\end{multicols}
\end{prop}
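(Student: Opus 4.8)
The plan is to verify each of the nine identities in Proposition~\ref{prop:properties_binary_relations} directly from the definitions of converse, complement, composition, and the identity relation given immediately above the statement. All nine are classical facts about the calculus of binary relations, and none requires more than a straightforward element-chase, so the proof will simply be a sequence of short ``$(x,y)$ belongs to the left-hand side iff \ldots iff it belongs to the right-hand side'' arguments. The main thing to get right is the bookkeeping of quantifiers and of the order of the coordinates, since converse swaps them.

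Concretely, I would proceed as follows. For (i), note $(x,y)\in (R^\smile)^\smile$ iff $(y,x)\in R^\smile$ iff $(x,y)\in R$. For (ii), $(x,y)\in (R^\smile)^c$ iff $(x,y)\notin R^\smile$ iff $(y,x)\notin R$ iff $(y,x)\in R^c$ iff $(x,y)\in (R^c)^\smile$. For (iii) and (iv), $(x,y)\in\big(\bigcup_i R_i\big)^\smile$ iff $(y,x)\in\bigcup_i R_i$ iff $(y,x)\in R_i$ for some $i$ iff $(x,y)\in R_i^\smile$ for some $i$ iff $(x,y)\in\bigcup_i R_i^\smile$, and the intersection case is identical with ``for some'' replaced by ``for all''. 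For (v), $(x,y)\in\mathsf{id}_X\circ R$ iff there is $z$ with $(x,z)\in\mathsf{id}_X$ and $(z,y)\in R$, i.e.\ $z=x$ and $(x,y)\in R$; the other equality is symmetric. For (vi), unwind both sides: $(x,y)\in (R\circ S)\circ T$ iff $\exists w\,\big((x,w)\in R\circ S \text{ and } (w,y)\in T\big)$ iff $\exists w\,\exists z\,\big((x,z)\in R,\ (z,w)\in S,\ (w,y)\in T\big)$, which is manifestly symmetric in how the association is grouped, hence equal to $(x,y)\in R\circ(S\circ T)$. For (vii), $(x,y)\in (R\circ S)^\smile$ iff $(y,x)\in R\circ S$ iff $\exists z\,\big((y,z)\in R \text{ and } (z,x)\in S\big)$ iff $\exists z\,\big((z,y)\in R^\smile \text{ and } (x,z)\in S^\smile\big)$ iff $(x,y)\in S^\smile\circ R^\smile$. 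Finally (viii) and (ix) follow because the existential quantifier in the definition of composition commutes with unions: $(x,y)\in\big(\bigcup_i R_i\big)\circ S$ iff $\exists z\,\exists i\,\big((x,z)\in R_i \text{ and } (z,y)\in S\big)$ iff $\exists i\,\big((x,y)\in R_i\circ S\big)$ iff $(x,y)\in\bigcup_i(R_i\circ S)$, and (ix) is the mirror image.

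There is no real obstacle here; the only place to be slightly careful is the associativity identity (vi), where one must expand both triple compositions fully to two nested existential quantifiers before observing they coincide, rather than trying to argue by cancellation. In the write-up I would likely present (i)--(iv) and (vii) very tersely, do (v) and (vi) in one or two lines each, and note that (viii) and (ix) are dual to each other so only one needs a proof. Since the paper will use these facts repeatedly in the constructions of Sections~\ref{sec:construct} and~\ref{sec:RDqRA}, it is worth stating them all explicitly even though each is elementary.
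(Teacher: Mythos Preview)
Your proof is correct and complete; each of the nine element-chases is valid. The paper itself does not give a proof of this proposition at all, treating the identities as classical facts about binary relations, so there is nothing to compare against --- your write-up simply supplies the routine details the authors omitted.
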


We are interested in algebras whose universes are binary relations. The assumption that $E$ is an equivalence relation together with the lemma below will ensure that if $R, S \subseteq E$, then $R\mathbin{;} S\subseteq E$ and $R^\smile \subseteq E$.  

\begin{lem}\label{lem:comp_and_conv_in_E}
Let $R, S, T\subseteq X^2$. 
\begin{enumerate}[\normalfont (i)]
\item If $T$ is transitive and $R, S \subseteq T$, then $R\mathbin{;} S \subseteq T$.
\item If $T$ is symmetric and $R \subseteq T$, then $R^{\smile} \subseteq T$. 	
\end{enumerate}	
\end{lem}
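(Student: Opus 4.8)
The plan is to prove both parts by a direct element-chase, simply unwinding the definitions of composition, converse, transitivity, and symmetry that were recalled just above. For part (i), I would fix an arbitrary pair $(x,y)\in R\circ S$. By definition of composition there is some $z\in X$ with $(x,z)\in R$ and $(z,y)\in S$. Since $R\subseteq T$ we get $(x,z)\in T$, and since $S\subseteq T$ we get $(z,y)\in T$; transitivity of $T$ then yields $(x,y)\in T$. As $(x,y)$ was arbitrary, $R\circ S\subseteq T$.

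For part (ii), I would fix $(x,y)\in R^{\smile}$. By definition of converse this means $(y,x)\in R$, and $R\subseteq T$ gives $(y,x)\in T$; symmetry of $T$ then yields $(x,y)\in T$, so $R^{\smile}\subseteq T$.

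There is essentially no obstacle: the statement is a routine consequence of the definitions, and the only point requiring any care is keeping the quantifier in the definition of composition and the swap of coordinates under converse straight. If one prefers a more structural phrasing, part (i) also follows from monotonicity of $\circ$ (a consequence of the distributivity identities in Proposition~\ref{prop:properties_binary_relations}) together with the fact that transitivity of $T$ is precisely $T\circ T\subseteq T$, and part (ii) from monotonicity of $\smile$ together with $T^{\smile}=T$ for symmetric $T$; but the bare element-chase above is shorter and self-contained.
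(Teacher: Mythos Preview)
Your proof is correct; the paper itself states this lemma without proof, treating it as immediate from the definitions, and your element-chase is exactly the standard verification one would supply.
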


The following lemma will be used in Section \ref{sec:RDqRA}: 

\begin{lem}\label{lem:(gammaR)capE=gamma(RcapE)}
Let $X$ be a set and $T$ a symmetric and transitive relation on $X$. If $S$ is a binary relation on $X$ such that $S \subseteq T$, then the following hold for all $R\subseteq T$:
\begin{enumerate}[\normalfont (i)]
\item $\left(S\mathbin{;} R\right) \cap T = S\mathbin{;}\left(R \cap T\right)$
\item $\left(R\mathbin{;} S\right) \cap T = \left(R \cap T\right)\mathbin{;}S$
\end{enumerate}		
\end{lem}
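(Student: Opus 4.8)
## Proof plan for Lemma~\ref{lem:(gammaR)capE=gamma(RcapE)}

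The plan is to prove each of the two equalities by establishing the two inclusions separately through a routine element chase, with the whole argument resting on one observation: if $(x,z)\in\gamma$ and $(x,y)\in T$, then since $\gamma\subseteq T$ and $T$ is symmetric we have $(z,x)\in T$, and transitivity of $T$ then gives $(z,y)\in T$ (and symmetrically when $\gamma$ sits on the right).

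For part (i), the inclusion $\gamma\circ(R\cap T)\subseteq(\gamma\circ R)\cap T$ is the easy one: $\gamma\circ(R\cap T)\subseteq\gamma\circ R$ by monotonicity of composition, and $\gamma\circ(R\cap T)\subseteq T$ by Lemma~\ref{lem:comp_and_conv_in_E}(i), using $\gamma\subseteq T$ and $R\cap T\subseteq T$. For the reverse inclusion, take $(x,y)\in(\gamma\circ R)\cap T$ with a witness $z$ satisfying $(x,z)\in\gamma$ and $(z,y)\in R$. From $\gamma\subseteq T$ and symmetry of $T$ we get $(z,x)\in T$, and then $(z,y)\in T$ by transitivity applied to $(z,x),(x,y)\in T$; hence $(z,y)\in R\cap T$, and the same witness $z$ shows $(x,y)\in\gamma\circ(R\cap T)$.

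Part (ii) is the mirror image. For $(R\cap T)\circ\gamma\subseteq(R\circ\gamma)\cap T$ one again uses monotonicity together with Lemma~\ref{lem:comp_and_conv_in_E}(i). For the reverse inclusion, given $(x,y)\in(R\circ\gamma)\cap T$ and a witness $z$ with $(x,z)\in R$ and $(z,y)\in\gamma$, one applies symmetry to $(z,y)\in\gamma\subseteq T$ to obtain $(y,z)\in T$, and then transitivity to $(x,y),(y,z)\in T$ gives $(x,z)\in T$, so $(x,z)\in R\cap T$ and $(x,y)\in(R\cap T)\circ\gamma$.

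There is no genuine obstacle here; it is a short verification, and the only real content is the symmetry-plus-transitivity trick above. Worth noting is that injectivity of $\gamma$ is never used — only $\gamma\subseteq T$ and the symmetry and transitivity of $T$ enter the argument — so the statement could be given for an arbitrary relation contained in $T$; the injectivity hypothesis is kept merely because that is the form in which the lemma is invoked in Section~\ref{sec:RDqRA}.
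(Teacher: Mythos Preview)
Your proof is correct and follows essentially the same approach as the paper: the paper also does an element chase using exactly the symmetry-plus-transitivity observation you isolate, writing the witness as $\gamma(x)$ rather than introducing a separate $z$. Your packaging of the easy inclusion via Lemma~\ref{lem:comp_and_conv_in_E}(i) is a minor streamlining over the paper's direct element chase, and your remark that injectivity is never used is correct and not noted in the paper.
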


\begin{proof}
For the left-to-right inclusion of item (i), let $(x, y) \in \left(S\mathbin{;} R\right) \cap T$. 
Then $(x, y) \in S\mathbin{;} R$ and $(x, y) \in T$. The former implies that there is some $z \in X$ such that $(x, z) \in S$ and $(z, y) \in R$. Hence, we have $(z,x) \in S^\smile$. Since $S \subseteq T$ and $T$ is symmetric, $S^\smile \subseteq T$ by Lemma \ref{lem:comp_and_conv_in_E}, and consequently $(z, x) \in T$.  
Thus, using the fact that $T$ is transitive, we can conclude that $(z, y) \in T$. 
Therefore
$(z, y) \in R \cap T$ and so $(x, y) \in S\mathbin{;} \left(R \cap T\right)$. 

Now, let  $(x, y) \in S\mathbin{;} \left(R \cap T\right)$. Then there is some $z \in X$ such that $(x, z) \in S$ and $(z, y) \in R \cap T$. The second part implies $(z, y) \in R$ and $(z, y) \in T$. Hence, $(x, y) \in S\mathbin{;} R$, and since $S\subseteq T$ and $T$ is transitive, we also have $(x, y) \in T$. We thus get $(x, y) \in \left(S\mathbin{;} R\right) \cap T$. 
	
The proof of item (ii) is similar.  
\end{proof}

The \emph{graph} of a function $\gamma: X \to X$ is the binary relation $G_\gamma \subseteq X^2$ defined by
$G_\gamma = \left\{\,\left(x, \gamma(x)\right)\mid x \in X\,\right\}$. 
In what follows we will use $\gamma$ interchangeably to denote either the function $\gamma: X \to X$ or the binary relation that is its graph. 

Let $\gamma: X \to X$ be a function and 
$n \in\omega$. Then we define 
$\gamma^0=\text{id}_X$ 
and $\gamma^{n+1} = \gamma^n\circ \gamma=\gamma \mathbin{;} \gamma^n$. The \emph{order} of a function $\gamma: X \to X$, denoted $\left|\gamma\right|$, is the smallest positive integer $n$ such that $\gamma^n = \text{id}_X$. 
If no such integer exists then we say that $\gamma$ has infinite order. 

The following lemma will be very 
useful and it applies in particular when $\gamma$ is a bijective function from $X$ to $X$.
We remind the reader that, as in Proposition~\ref{prop:properties_binary_relations}, $R^c$ is the complement of $R$ in $U$. 

\begin{lem}\label{lem:important_eq_injective_map}
Let $X$ be a set and $U$ a symmetric and transitive relation on $X$. Further, let $R, S, \gamma \subseteq U$. If 
$\gamma$ satisfies 
$\gamma^{\smile}\mathbin{;} \gamma = \mathrm{id}_X$ and $\gamma \mathbin{;} \gamma^{\smile}=\mathrm{id}_X$
then the following hold:
\begin{enumerate}[\normalfont (i)]
\item If $\left|\gamma \right| = n\ge 2$, then $\gamma^{n-k} = \left(\gamma^\smile\right)^{k}$, where $1 \le k \le n-1$.
\item $\left(\gamma\mathbin{;} R\right)^c = \gamma \mathbin{;} R^c$
\item $\left(R\mathbin{;} \gamma\right)^c = R^c \mathbin{;} \gamma$
\item $\gamma \mathbin{;} \left(R\cap S\right) = \left(\gamma\mathbin{;} R\right) \cap \left(\gamma \mathbin{;} S\right)$
\item $\left(R \cap S\right) \mathbin{;} \gamma = \left(R \mathbin{;} \gamma\right) \cap \left(S \mathbin{;} \gamma\right)$
\end{enumerate}	
\end{lem}

\begin{proof}
We will prove items (ii) and (iv). For item (ii), first let $\left(x, y\right) \in \left(\gamma\mathbin{;} R\right)^c$. Then $(x, y) \in U$ and $(x, y) \notin \gamma\mathbin{;} R$. The second part implies that for all $z \in X$, we have $\left(x, z\right) \notin \gamma$ or $\left(z, y\right) \notin R$. Since $\mathrm{id}_X = \gamma\mathbin{;} \gamma^\smile$ and $\left(x, x\right) \in \mathrm{id}_X$, we have $\left(x, u\right) \in \gamma$ for some $u \in X$. Hence, it must be the case that $\left(u, y\right) \notin R$. Now $\gamma \subseteq U$, and so, since $U$ is symmetric and transitive, we get $\left(u, y\right) \in U$. It follows that $\left(u, y\right) \in R^c$, and therefore $\left(x, y\right) \in \gamma\mathbin{;} R^c$. 

For the reverse inclusion, let $\left(x, y\right) \in \gamma\mathbin{;} R^c$. Then there is some $z \in X$ such that $\left(x, z\right) \in \gamma$ and $\left(z, y\right) \in R^c$. The second part implies that $\left(z, y\right) \in U$ but $\left(z, y\right) \notin R$. We have to show that $\left(x, y\right) \in U$ while $\left(x, y\right) \notin \gamma\mathbin{;} R$. Since $\gamma \subseteq U$ and $U$ is transitive, we get $\left(x, y\right) \in U$. To see that $\left(x, y\right) \notin \gamma\mathbin{;} R$, let $u$ be an arbitrary element of $X$ such that $\left(x, u\right) \in \gamma$. Now $\left(x, z\right) \in \gamma$, so  $\left(z, x\right) \in \gamma^\smile$, and therefore $\left(z, u\right) \in \gamma^\smile\mathbin{;} \gamma = \mathrm{id}_X$. This shows that $z = u$, and hence $\left(u, y\right) \notin R$. It thus follows that $\left(x, y\right) \notin \gamma\mathbin{;} R$, and consequently, since $(x,y) \in U$, we get  $\left(x, y\right) \in \left(\gamma\mathbin{;} R\right)^c$. 

For item (iv), first let $(x, y) \in \gamma\mathbin{;} \left(R \cap S\right)$. Then there is some $z \in X$ such that $(x, z) \in \gamma$ and $(z, y) \in R \cap S$. The second part implies $(z, y) \in R$ and $(z, y) \in S$. Hence, we get $(x, y) \in \gamma\mathbin{;} R$ and $(x, y) \in \gamma\mathbin{;}S$, which gives $(x, y) \in \left(\gamma \mathbin{;} R\right) \cap \left(\gamma \mathbin{;} S\right)$.

Now let $(x, y) \in \left(\gamma\mathbin{;} R\right) \cap \left(\gamma\mathbin{;} S\right)$.  
This means there are $u, v \in X$ such that $(x, u) \in \gamma$, $(u, y) \in R$, $(x, v) \in \gamma$ and $(v, y) \in S$. From the first part we get $(u, x) \in \gamma^\smile$, and consequently $(u, v) \in \gamma^{\smile}\mathbin{;} \gamma = \mathrm{id}_X$. Hence, $(u, y) \in R$ and $(u, y) \in S$, and therefore $(x, y) \in \gamma\mathbin{;} \left(R \cap S\right)$. 
\end{proof}

\subsection{Constructing distributive quasi relation algebras from posets}\label{subsec:construct}

Our construction proceeds in two steps: first we show how to construct a distributive InFL-algebra. 
Then we identify the right definition of a $'$ operation in order to get a distributive quasi relation algebra. 

Before outlining our construction, we note that the underlying lattice structure that we obtain is the same as that used by Galatos and Jipsen~\cite{GJ20-AU, GJ20-ramics}, and more recently by Jipsen and \v{S}emrl~\cite{JS23}. The major difference is that we place additional symmetry requirements on the 
poset, which allows us to define a  
$0$ which is not equal to $(\leqslant^c)^{\smile}$. 
This allows for representations of algebras with $0=1$ and also non-cyclic InFL-algebras.

As is the case for relation algebras, we will construct both so-called equivalence DqRAs and full DqRAs. Our proofs cover the case of equivalence DqRAs and then the case of full DqRAs will follow naturally (Corollary~\ref{cor:FDqRA}).

Let $\mathbf X = \left(X, \le\right)$ be a poset  and $E$ an equivalence relation on $X$ such that ${\le} \subseteq E$. The equivalence relation $E$ can be partially ordered as follows: for all $(u, v), (x, y) \in E$,
$$(u, v) \preceq (x, y) \qquad  \textnormal{iff} \qquad x \le u \textnormal{ and } v\le y.$$
Then $\mathbf E = \left(E, \preceq\right)$ is a poset and the set of up-sets of $\mathbf E$, $\textsf{Up}\left(\mathbf E\right)$, ordered by inclusion, is a distributive lattice.

The next lemma shows that composition, converse and complement can be combined in various ways to yield 
up-sets/down-sets
when applied to up-sets/down-sets of $\mathbf{E}$.
Furthermore it shows that when the 
graph of an order automorphism/dual order automorphism 
of $\mathbf{X}$ is composed with up-sets/down-sets then  up-sets are produced. 
Below we denote by $\mathsf{Down}(\mathbf{E})$ the down-sets of the poset $\mathbf{E}$.

In the rest of this paper, when we write $R^c$ for $R \subseteq E$, we intend the complement to be taken in the equivalence relation $E$. 

\begin{lem}\label{lem:important_up-_and_down-sets}
Let $\mathbf X = \left(X, \le\right)$ be a poset and $E$ an equivalence relation on $X$ such that ${\le} \subseteq E$. Furthermore, let  $\alpha: X \rightarrow X$ be an order automorphism of $\mathbf X$ such that $\alpha \subseteq E$ and $\beta: X \rightarrow X$ a dual order automorphism of $\mathbf X$ such that $\beta \subseteq E$.
\begin{enumerate}[\normalfont (i)]
\item If $R, S  \in \mathsf{Up}\left(\mathbf E\right)$, then $R\mathbin{;} S \in \mathsf{Up}\left(\mathbf E\right)$.
\item If $R, S  \in \mathsf{Down}\left(\mathbf E\right)$, then $R\mathbin{;} S \in \mathsf{Down}\left(\mathbf E\right)$.
\item $R \in \mathsf{Up}\left(\mathbf E\right)$ iff $R^c \in \mathsf{Down}\left(\mathbf E\right)$.
\item $R \in \mathsf{Up}\left(\mathbf E\right)$ iff $R^{\smile} \in \mathsf{Down}\left(\mathbf E\right)$.
\item If $R \in \mathsf{Up}\left(\mathbf E\right)$, then $\alpha\mathbin{;} R \in \mathsf{Up}\left(\mathbf E\right)$ and $R\mathbin{;} \alpha \in \mathsf{Up}\left(\mathbf E\right)$. 
\item If $R \in \mathsf{Down}\left(\mathbf E\right)$, then $\beta\mathbin{;}R\mathbin{;}\beta \in \mathsf{Up}\left(\mathbf E\right)$.
\end{enumerate}	
\end{lem}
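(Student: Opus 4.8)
The plan is to split the six claims into three groups — (i)--(ii) (composition), (iii)--(iv) (complement and converse), (v)--(vi) (pre/post-composition with automorphisms) — and to dispose of the underlying-set requirement first in every case. Recall that membership in $\mathsf{Up}(\mathbf E)$ or $\mathsf{Down}(\mathbf E)$ demands a subset of $E$. Since $E$ is an equivalence relation, it is transitive and symmetric, so Lemma~\ref{lem:comp_and_conv_in_E} supplies all of these: $R\circ S\subseteq E$ for (i) and (ii); $R^\smile\subseteq E$ for (iv); and, applying (i) of that lemma twice, $\alpha\circ R,\ R\circ\alpha,\ \beta\circ R\circ\beta\subseteq E$ for (v) and (vi) (using $\alpha,\beta\subseteq E$). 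I read $R^c$ in (iii) as the complement relative to $E$, i.e.\ $E\setminus R$, which is automatically inside $E$; the statement is not well-formed otherwise. With containment secured, only the order condition is left.

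For (i), given $(x,y)\in R\circ S$ with $(x,y)\preceq(x',y')$ (so $x'\le x$ and $y\le y'$), choose a witness $z$ with $(x,z)\in R$ and $(z,y)\in S$. Then $(x,z)\preceq(x',z)$ and, since $(x',x)\in\,\le\,\subseteq E$, $(x,z)\in R\subseteq E$ and $E$ is transitive, the pair $(x',z)$ lies in $E$; hence $(x',z)\in R$ because $R$ is an up-set. Symmetrically $(z,y')\in E$ and $(z,y')\in S$, so $(x',y')\in R\circ S$. Part (ii) is the order-dual of this. The recurring subtlety here — and essentially the only place care is needed in the whole lemma — is exactly this: the up-/down-set condition of $\mathbf E$ may be invoked only for pairs already known to lie in $E$, so at each step one certifies $E$-membership of the intermediate pair using $\le\,\subseteq E$, $\alpha,\beta\subseteq E$, symmetry and transitivity.

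Claims (iii) and (iv) are instances of general order-theoretic facts: relative complementation interchanges up-sets and down-sets of any poset, and a dual order automorphism of a poset carries up-sets to down-sets and back. For (iii) this is immediate once $R^c$ is taken within $E$. For (iv) the key observation is that $\sigma\colon(u,v)\mapsto(v,u)$ restricts to a bijection of $E$ (as $E$ is symmetric) that is order-reversing for $\preceq$: unwinding the definition, $(v,u)\preceq(y,x)$ holds iff $y\le v$ and $u\le x$, i.e.\ iff $(x,y)\preceq(u,v)$. Since $R^\smile=\sigma[R]$, the image of an up-set under $\sigma$ is a down-set and conversely; the reverse implication of the ``iff'' also uses $(R^\smile)^\smile=R$.

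For (v) and (vi) it is cleanest to write each operation as a preimage. One has $\alpha\circ R=\{(x,y):(\alpha(x),y)\in R\}$ and $R\circ\alpha=\{(x,y):(x,\alpha^{-1}(y))\in R\}$, i.e.\ preimages of $R$ under $(x,y)\mapsto(\alpha(x),y)$ and $(x,y)\mapsto(x,\alpha^{-1}(y))$; since $\alpha$ and $\alpha^{-1}$ are order automorphisms of $\mathbf X$ (and send $E$ into $E$, again by $\alpha\subseteq E$ plus symmetry/transitivity), each of these maps is an order-preserving self-map of $\mathbf E$, so the preimage of the up-set $R$ is an up-set. Similarly $\beta\circ R\circ\beta=\{(x,y):(\beta(x),\beta^{-1}(y))\in R\}$ is the preimage of $R$ under $\psi\colon(x,y)\mapsto(\beta(x),\beta^{-1}(y))$; because $\beta$ and $\beta^{-1}$ are dual order automorphisms, unwinding $\preceq$ shows $\psi$ is \emph{order-reversing} on $\mathbf E$, and the preimage of a down-set under an order-reversing map is an up-set, giving (vi). I expect the only genuine work to be this order-reversal bookkeeping in (vi) and the pervasive $E$-membership checks flagged in the second paragraph; alternatively, (v) and (vi) can be run as direct element chases in the exact style of (i).
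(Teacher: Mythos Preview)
Your proof is correct and follows essentially the same approach as the paper. The paper proves (i), (iv), (v), (vi) by direct element chases (leaving (ii), (iii), and the $R\circ\alpha$ half of (v) to the reader), while you package (iv)--(vi) more conceptually as preimages under order-preserving or order-reversing self-maps of $\mathbf E$; this is a tidier phrasing of the identical computation, and your explicit $E$-membership checks and your reading of $R^c$ as $E\setminus R$ in (iii) are slightly more careful than the paper, which simply calls (iii) ``well-known''.
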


\begin{proof}
For item (i), assume $R, S \in \mathsf{Up}\left(\mathbf E\right)$. 
Let $(u, v) \in R\mathbin{;} S$ and $(x, y) \in E$, and assume $(u, v) \preceq (x, y)$. 
Since $(u, v) \in R\mathbin{;} S$, there is some $w \in X$ such that $(u, w) \in R$ and $(w,v) \in S$. 
From the assumption we have $x \leq u$ and $v \leq y$. 
But $w \leq w$, so $(u, w) \preceq (x, w)$, and hence, since $R$ is an up-set, $(x, w) \in R$. 
Likewise, $w \leq w$ and $v \leq y$ imply that $(w, v)\preceq (w, y)$. 
Therefore, since $(w, v) \in S$ and $S$ is an up-set, it follows that $(w, y) \in S$.
This means $(x, w) \in R$ and $(w, y) \in  S$, so $(x, y) \in R\mathbin{;} S$. 

The proof of item (ii) is similar to that of item (i). 

Item (iii) is a well-known fact. 
For item (iv), assume $R \in \mathsf{Up}\left(\mathbf E\right)$. Let $(x, y) \in R^\smile$ and $(u, v) \in E$, and assume $(u, v) \preceq (x, y)$. Then $(y, x) \in R$, $x \le u$ and $v \le y$. We thus have $(y, x)\preceq (v, u)$, and so, since $R$ is an upset and $(y, x) \in R$, it follows that $(v, u) \in R$. Hence, $(u, v) \in R^\smile$. 

Conversely, assume $R^{\smile} \in \mathsf{Down}\left(\mathbf E\right)$. Let $(u, v) \in R$ and $(x, y) \in E$ such that $(u, v)\preceq (x, y)$. The latter implies that $x \le u$ and $v \le y$, and so $(y, x)\preceq (v, u)$. Hence, since $(u, v) \in R$ implies that $(v, u) \in R^\smile$ and $R^\smile$ is a down-set, we get $(y, x) \in R^{\smile}$. Thus, $(x, y) \in R$.

For item (v), let $R \in \mathsf{Up}\left(\mathbf E\right)$. Since $\alpha \subseteq E$ and $E$ is transitive, it follows that $\alpha\mathbin{;} R \subseteq E$ and $R\mathbin{;}\alpha \subseteq E$. We now show that $\alpha\mathbf{;} R$ is an up-set. Let $(x, y) \in E$ and $(u, v) \in  \alpha\mathbin{;} R$. Assume $(u, v) \preceq (x, y)$. Since $(u, v) \in  \alpha \mathbin{;}R$, we have $(\alpha(u), v) \in R$.  From $(u, v) \preceq (x, y)$ we get $x \le u$ and $v \le y$. The former together with the fact that $\alpha$ is an order automorphism imply that $\alpha(x) \le \alpha(u)$. Hence, $(\alpha(u), v) \preceq (\alpha(x), y)$. Consequently, since  $(\alpha(u), v) \in R$ and $R$ is an up-set, we get $(\alpha(x), y) \in R$, which means $(x, y) \in \alpha \mathbin{;} R$. 
Since  $\alpha^{-1}$ is  an order automorphism, one can show that $R\mathbin{;}\alpha$ is also an up-set. 

Finally, for item (vi), let $R \in \mathsf{Down}\left(\mathbf E\right)$. As before, the assumption that $\beta \subseteq E$ guarantees that $\beta \mathbin{;}R\mathbin{;} \beta \subseteq E$. To see that $\beta \mathbin{;} R\mathbin{;} \beta$ is an up-set, let $(x, y) \in E$ and $(u, v) \in  \beta \mathbin{;} R\mathbin{;}\beta$, and assume $(u, v) \preceq (x, y)$. Then we get $\left(\beta(u), \beta^{-1}(v)\right) \in R$, $x \le u$ and $v \le y$. But $\beta$ is a dual order automorphism of $\mathbf X$, so we have $\beta(u) \le \beta(x)$ and $\beta^{-1}(y) \le \beta^{-1}(v)$. Hence, it follows that $\left(\beta(x), \beta^{-1}(y)\right)\preceq \left(\beta(u), \beta^{-1}(v)\right)$, and so, since $\left(\beta(u), \beta^{-1}(v)\right) \in R$ and $R$ is a down-set, we get $\left(\beta(x), \beta^{-1}(y)\right)\in R$, which means $(x, y) \in \beta \mathbin{;} R\mathbin{;}\beta$. \qedhere
\end{proof}

\begin{lem}\label{lem:order_identity}
Let $\mathbf X = \left(X, \le\right)$ be a poset and $E$ an equivalence relation on $X$ such that ${\le} \subseteq E$.
\begin{enumerate}[\normalfont (i)]
\item ${\le} \in \mathsf{Up}\left(\mathbf E\right)$.
\item ${\le}\mathbin{;} R = R\mathbin{;} {\le} = R$ for all $R \in \mathsf{Up}\left(\mathbf E\right)$. 
\item ${\le}^{\smile}\mathbin{;} S = S\mathbin{;} {\le}^{\smile} = S$ for all $S \in \mathsf{Down}\left(\mathbf E\right)$.
\end{enumerate}	
\end{lem}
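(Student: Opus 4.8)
The statement asserts three things: that $\le$ is an up-set of $\mathbf{E}$, that $\le$ acts as a two-sided identity for composition on $\mathsf{Up}(\mathbf{E})$, and that $\le^{\smile}$ acts as a two-sided identity for composition on $\mathsf{Down}(\mathbf{E})$.

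For item (i), I would unpack the definition of $\preceq$. Suppose $(u,v)\in\,\le$ (i.e.\ $u\le v$) and $(u,v)\preceq(x,y)$ with $(x,y)\in E$; the latter means $x\le u$ and $v\le y$. Transitivity of $\le$ then gives $x\le u\le v\le y$, so $(x,y)\in\,\le$. Hence $\le$ is an up-set. (One should also note $\le\,\subseteq E$ is given, so $\le$ genuinely lies in $\mathsf{Up}(\mathbf{E})$.)

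For item (ii), the inclusion $\le\circ\,R\supseteq R$ and $R\circ\le\,\supseteq R$ are immediate from reflexivity of $\le$ (take $w=u$ or $w=v$ as the witnessing middle element). For the reverse inclusions I would use that $R$ is an up-set: if $(u,v)\in\,\le\circ\,R$, pick $w$ with $u\le w$ and $(w,v)\in R$; then $(w,v)\preceq(u,v)$ (since $u\le w$ and $v\le v$), so $R$ being an up-set gives $(u,v)\in R$. The argument for $R\circ\le$ is symmetric, using $(w,v)$ with $(u,w)\in R$ and $w\le v$, so $(u,w)\preceq(u,v)$. Item (iii) is the order-dual: by Lemma~\ref{lem:important_up-_and_down-sets}(iv), $S\in\mathsf{Down}(\mathbf{E})$ iff $S^{\smile}\in\mathsf{Up}(\mathbf{E})$, and one can either repeat the argument of (ii) with the inequalities reversed (for a down-set $S$, if $(w,v)\in S$ and $w\le u$ then $(w,v)\preceq(u,v)$ fails — rather $(u,v)\preceq(w,v)$, so $S$ being a down-set gives $(u,v)\in S$), or deduce it formally from (ii) by applying $\smile$ to both sides and using Proposition~\ref{prop:properties_binary_relations}(vii) together with $(\le)^{\smile}=\,\le^{\smile}$ and $\le^{\smile}\circ\,S^{\smile}=(S\,\circ\le)^{\smile}$.

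I do not expect any genuine obstacle here: the whole lemma is a direct unwinding of the definition of $\preceq$ plus reflexivity/transitivity of $\le$ and the up-set/down-set property. The only point requiring a moment's care is keeping the direction of the inequalities straight when passing from (ii) to (iii), since the poset $\mathbf{E}$ reverses the first coordinate; deriving (iii) from (ii) via converse (using Lemma~\ref{lem:important_up-_and_down-sets}(iv) and Proposition~\ref{prop:properties_binary_relations}(vii)) is the cleanest way to avoid sign errors.
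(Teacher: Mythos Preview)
Your proposal is correct and matches the paper's proof essentially line for line: item~(i) by transitivity of $\le$, item~(ii) by reflexivity for one inclusion and the up-set property for the other, and item~(iii) derived from~(ii) via converse using Lemma~\ref{lem:important_up-_and_down-sets}(iv) and Proposition~\ref{prop:properties_binary_relations}(vii), exactly as you suggest is cleanest.
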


\begin{proof}
It follows from assumption that ${\le} \subseteq E$. To show that $\le$ is an up-set, let $(x, y) \in E$, and assume $u \le v$ and $(u, v)\preceq (x, y)$. 
We get $x \leqslant u \leqslant v \leqslant y$. 
For item (ii), let $R \in \mathsf{Up}\left(\mathbf{E}\right)$. 
We will prove that ${\le}\mathbin{;}  R = R$.
To prove the left-to-right inclusion, let $(x, y) \in  {\le}\mathbin{;} R$. 
Then there is some $z \in X$ such that $x \le z$ and $(z, y) \in R$. 
Since 
$x \leq z$, we have $(z, y)\preceq (x, y)$. 
Hence, as $R$ is an up-set, $(x, y) \in R$.

For the right-to-left inclusion, let $(x, y) \in R$. 
From $(x,x) \in {\le}$ we get $(x,y) \in {\le}\mathbin{;} R$. 
In a similar way one can show that $R\mathbin{;}  {\le} = R$

For (iii), let $S \in \mathsf{Down}\left(\mathbf E\right)$. Then $S^\smile \in \mathsf{Up}\left(\mathbf E\right)$ by Lemma \ref{lem:important_up-_and_down-sets}. Hence,  ${\le}^\smile\mathbin{;} S = \left(S^\smile\mathbin{;} {\le}\right)^\smile = \left(S^\smile\right)^\smile = S$ by (ii). Likewise,  $S\mathbin{;} {\le}^{\smile} = S$.
\end{proof}

Consider a poset $\mathbf X = \left(X, \le\right)$ and an equivalence relation $E$ on $X$ such that ${\le} \subseteq E$. 
Since composition is associative and $\le$ is the identity of composition according to Lemma \ref{lem:order_identity}, the structure $\left\langle\mathsf{Up}\left(\mathbf E\right), \mathbin{;}, \le\right\rangle$ is a monoid. 
Furthermore, composition is residuated and the residuals $\backslash, /$ are defined by the usual expressions for residuals on binary relations: $R\backslash S = (R^{\smile}\mathbin{;} S^c)^c$ and $R/S = (R^c\mathbin{;} S^{\smile})^c$. The structure 
$\left\langle \mathsf{Up}\left(\mathbf E\right), \cap, \cup, \mathbin{;}, \leqslant, \backslash, / \right\rangle$ is therefore a residuated (distributive) lattice. 

Recall that a qRA is an algebra of the form $\left\langle A, \wedge, \vee, \cdot, 1, 0, {\sim}, {-}, '\right\rangle$. In order to extend $\left\langle \mathsf{Up}\left(\mathbf E\right), \cap, \cup, \mathbin{;},\le, \backslash, / \right\rangle$ to a quasi relation algebra, we need to find a suitable $0$ and $'$. 
The $0$ must be chosen to guarantee an InFL-algebra, and $'$ must be involutive and satisfy \textsf{(Dm)},
\textsf{(Di)} 
and \textsf{(Dp)}.  
Before we give our choice for $0$ and define $'$, we give examples of the construction so far. 

\begin{figure}
\begin{tikzpicture}[scale=0.09]
		
\begin{scope}
\node at (0,-10) {\makebox[0pt]{\smash{$\left(X, \leqslant\right)$}}};
\node[unshaded] (A) at (0,7) {};
\node[unshaded] (B) at (0,20) {};
\draw[order] (A) to (B);
\node[anchor=east,xshift=-1mm] at (A) {$x$};
\node[anchor=east,xshift=-1mm] at (B) {$y$};
\end{scope}
		
\begin{scope}[xshift=25cm]
\node at (0,-10) {\makebox[0pt]{\smash{$\left(X^2,\preceq\right)$}}};
\node[unshaded] (A) at (0,10) {};
\node[unshaded] (B) at ($(A)+(135:10)$) {};
\node[unshaded] (C) at ($(A)+(45:10)$) {};
\node[unshaded] (D) at ($(B)+(45:10)$) {};
\draw[order] (A) to (B);
\draw[order] (A) to (C);
\draw[order] (B) to (D);
\draw[order] (C) to (D);
\node[anchor=north] at (A) {$\left(y,x\right)$};
\node[anchor=east] at (B) {$\left(x,x\right)$};
\node[anchor=west] at (C) {$\left(y,y\right)$};
\node[anchor=south] at (D) {$\left(x,y\right)$};
\end{scope}
		
\begin{scope}[xshift=78cm]
\node at (0,-10) {\makebox[0pt]{\smash{$(\mathsf{Up}\left(\left(X^2,\preceq\right)\right),\subseteq)$}}};
\node[unshaded] (A) at (0,0) {};
\node[unshaded] (C) at ($(A)+(90:10)$) {};
\node[unshaded] (D) at ($(C)+(135:10)$) {};
\node[unshaded] (E) at ($(C)+(45:10)$) {};
\node[unshaded] (F) at ($(D)+(45:10)$) {};
\node[unshaded] (G) at ($(F)+(90:10)$) {};
\draw[order] (A) to (C);
\draw[order] (C) to (D);
\draw[order] (C) to (E);
\draw[order] (D) to (F);
\draw[order] (E) to (F);
\draw[order] (F) to (G);
\node[anchor=west,xshift=1mm,yshift=-1mm] at (A) {$\varnothing$};
\node[anchor=west,xshift=1mm,yshift=-1mm] at (C) {$\{\left(x,y\right)\}$};
\node[anchor=east] at (D) {$\{\left(x,x\right), \left(x,y\right)\}$};
\node[anchor=west] at (E) {$\{\left(y,y\right), \left(x,y\right)\}$};
\node[anchor=west,xshift=1mm,yshift=1mm] at (F) {$\leqslant$};
\node[anchor=west,xshift=1mm,yshift=-1mm] at (G) {$X^2$};
\end{scope}
\end{tikzpicture}
\caption{A distributive lattice of binary relations from a poset. }
\label{fig:ex_construction_6elements}
\end{figure}
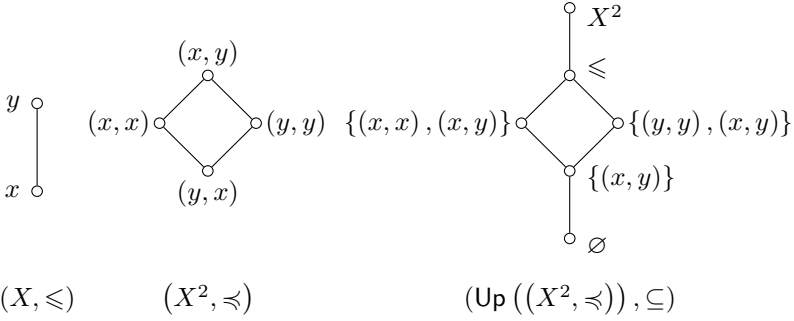

\begin{ex}\label{ex:two-elmt-chain}
Let $X = \left\{x, y\right\}$ and ${\le} = \left\{(x,x), (x,y), (y,y)\right\}$. Take $E = X^2$. Consider the  
poset $\left(X^2, \preceq\right)$,  and the lattice $\mathsf{Up}((X^2,\preceq))$ in Figure \ref{fig:ex_construction_6elements}.
Since $-1=0= {\sim} 1$, and $\sim$ and $-$ are dual lattice isomorphisms  according to Lemma \ref{lem:dual-iso}, 
the only option for $0$ is 
$0=\{\left(x,y\right)\}=\left(\leqslant^c\right)^\smile$. 
Hence, we get
${\sim}\{\left(x,x\right),\left(x,y\right)\} =
\{\left(y,y\right),\left(x,y\right)\}= -\,\{\left(x,x\right),\left(x,y\right)\}$
and 
${\sim} \{\left(y,y\right),\left(x,y\right)\}  =
\{\left(x,x\right),\left(x,y\right)\} =
-\{\left(y,y\right),\left(x,y\right)\}$,
and so
we obtain a cyclic InFL-algebra. 
\end{ex}

\begin{ex}\label{ex:Vposet}
Consider the posets $\left(X, \le\right)$ and $\left(X^2, \preceq\right)$ in Figure \ref{fig:ex_construction_50elements}. 
The lattice of up-sets of $(X^2,\preceq)$ has 50 elements. Given that the height of the lattice is 9, and the height of $\leqslant$ is 5, the height of $0={-}{\leqslant}={\sim}{\le}$ must be 4. Hence 
there are ten possibilities for $0$. However, by making a few calculations, it can be shown that eight of these possibilities yield a $-$ and $\sim$ that are not dual lattice isomorphisms and can therefore be eliminated. The only possibilities  are $0 = {\uparrow} \left(y,z\right) \cup\, {\uparrow} \left(z,y\right) = \left(\le^c\right)^\smile$ and $0 = {\uparrow} \left(y,y\right) \cup\, {\uparrow} \left(z,z\right)$. In the former case we obtain a cyclic
InFL-algebra and in the latter 
a non-cyclic InFL-algebra. Notice that ${\uparrow} \left(y,y\right) \cup\, {\uparrow} \left(z,z\right) = \left(\le^c\right)^\smile\mathbin{;} \left\{\left(x,x\right), \left(y,z\right), \left(z,y\right)\right\}$ and that $\left\{\left(x,x\right), \left(y,z\right), \left(z,y\right)\right\}$ is the graph of the order automorphism $\alpha: X\to X$ defined by $\alpha(x) = x$, $\alpha (y) = z$ and $\alpha(z) = y$. 
\end{ex}

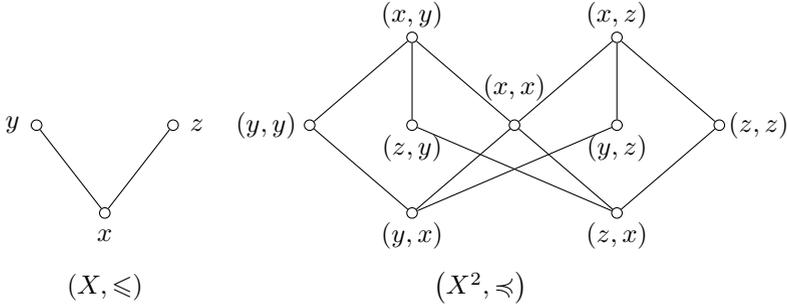
\begin{figure}[t]
\begin{tikzpicture}[scale=0.09]
		
\begin{scope}
\node at (0,-5) {\makebox[0pt]{\smash{$\left(X, \leqslant\right)$}}};
\node[unshaded] (A) at (0,7) {};
\node[unshaded] (B) at (-10,20) {};
\node[unshaded] (C) at (10,20) {};
\draw[order] (A) to (B);
\draw[order] (A) to (C);
\node[anchor=north,yshift=-1mm] at (A) {$x$};
\node[anchor=east,xshift=-1mm] at (B) {$y$};
\node[anchor=west,xshift=1mm] at (C) {$z$};
\end{scope}
		
\begin{scope}[xshift=55cm]
\node at (0,-5) {\makebox[0pt]{\smash{$\left(X^2,\preceq\right)$}}};
\node[unshaded] (A) at (-10,7) {};
\node[unshaded] (B) at (20,7) {};
\node[unshaded] (C) at (-25,20) {};
\node[unshaded] (D) at (-10,20) {};
\node[unshaded] (E) at (-10,33) {};
\node[unshaded] (F) at (5,20) {};
\node[unshaded] (G) at (35,20) {};
\node[unshaded] (H) at (20,20) {};
\node[unshaded] (I) at (20,33) {};
\draw[order] (A) to (C);
\draw[order] (C) to (E);
\draw[order] (A) to (F);
\draw[order] (F) to (E);
\draw[order] (B) to (F);
\draw[order] (B) to (G);
\draw[order] (F) to (I);
\draw[order] (G) to (I);
\draw[order] (A) to (H);
\draw[order] (B) to (D);
\draw[order] (D) to (E);
\draw[order] (H) to (I);
\node[anchor=north] at (A) {$\left(y,x\right)$};
\node[anchor=north] at (B) {$\left(z,x\right)$};
\node[anchor=east,xshift=-0.5mm] at (C) {$\left(y,y\right)$};
\node[anchor=north] at (D) {$\left(z,y\right)$};
\node[anchor=south] at (E) {$\left(x,y\right)$};
\node[anchor=west] at (G) {$\left(z,z\right)$};
\node[anchor=north] at (H) {$\left(y,z\right)$};
\node[anchor=south] at (I) {$\left(x,z\right)$};
\node[anchor=south,yshift=2mm] at (F) {$\left(x,x\right)$};
\end{scope}
\end{tikzpicture}
\caption{The twisted product from a three-element poset.}
\label{fig:ex_construction_50elements}
\end{figure}

Example~\ref{ex:Vposet} suggests that a non-identity order automorphism on the poset could be used to find a candidate other than $(\le^c)^\smile$ for $0$. 
The next lemma shows that if $\alpha$ is an order automorphism of a poset $\left(X, \le\right)$ and $E$ is an equivalence relation containing the order and $\alpha$, then $\alpha\mathbin{;} \left(\le^c\right)^\smile$ is an up-set of $\mathbf{E}$ and, in fact, $\alpha\mathbin{;} \left(\le^c\right)^\smile = \left(\le^c\right)^\smile\mathbin{;} \alpha$.

\begin{lem}\label{lem:definition_of_0}
Let $\mathbf X = \left(X, \le\right)$ be a poset and $E$ an equivalence relation on $X$ such that ${\le} \subseteq E$. If $\alpha: X \rightarrow X$ is an order automorphism of $\mathbf X$ such that $\alpha \subseteq E$, then $\alpha\mathbin{;} \left(\le^{c}\right)^{\smile} = \left(\le^{c}\right)^{\smile}\mathbin{;} \alpha \in \mathsf{Up}\left(\mathbf E\right)$.
\end{lem}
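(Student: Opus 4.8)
The plan is to prove the equality $\alpha \circ (\le^c)^\smile = (\le^c)^\smile \circ \alpha$ by a direct pointwise computation, and then deduce membership in $\mathsf{Up}(\mathbf{E})$ from the earlier lemmas. For the first equality, I would unwind all the definitions: for $u, v \in X$ we have $(u,v) \in \alpha \circ (\le^c)^\smile$ iff $(\alpha(u), v) \in (\le^c)^\smile$ iff $(v, \alpha(u)) \in \le^c$ iff $v \not\le \alpha(u)$. Similarly $(u,v) \in (\le^c)^\smile \circ \alpha$ iff there is $w$ with $(u,w) \in (\le^c)^\smile$ and $(w,v) \in \alpha$; the latter forces $w = \alpha^{-1}(v)$, so this holds iff $(u, \alpha^{-1}(v)) \in (\le^c)^\smile$ iff $(\alpha^{-1}(v), u) \in \le^c$ iff $\alpha^{-1}(v) \not\le u$. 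So it remains to check $v \not\le \alpha(u) \iff \alpha^{-1}(v) \not\le u$, equivalently $v \le \alpha(u) \iff \alpha^{-1}(v) \le u$ — and this is immediate from the fact that $\alpha$ (and hence $\alpha^{-1}$) is an order automorphism of $\mathbf{X}$: apply $\alpha^{-1}$ to the first inequality to get the second, and apply $\alpha$ to go back.

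For membership in $\mathsf{Up}(\mathbf{E})$, I would argue as follows. By Lemma~\ref{lem:important_up-_and_down-sets}(iii), $\le^c \in \mathsf{Down}(\mathbf{E})$ since $\le \in \mathsf{Up}(\mathbf{E})$ by Lemma~\ref{lem:order_identity}(i). Then by Lemma~\ref{lem:important_up-_and_down-sets}(iv) applied in the down-set direction (or its converse), $(\le^c)^\smile \in \mathsf{Up}(\mathbf{E})$: indeed $R \in \mathsf{Up}(\mathbf{E})$ iff $R^\smile \in \mathsf{Down}(\mathbf{E})$, so taking $R = (\le^c)^\smile$ and using $\big((\le^c)^\smile\big)^\smile = \le^c \in \mathsf{Down}(\mathbf{E})$ gives $(\le^c)^\smile \in \mathsf{Up}(\mathbf{E})$. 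Finally, since $\alpha$ is an order automorphism of $\mathbf{X}$ with $\alpha \subseteq E$, Lemma~\ref{lem:important_up-_and_down-sets}(v) gives $\alpha \circ (\le^c)^\smile \in \mathsf{Up}(\mathbf{E})$. (One should also note $\le^c \subseteq E$ is needed for these to be subsets of $E$ in the first place; but $\le^c$ need not be contained in $E$ in general — however $(\le^c)^\smile$ intersected appropriately, or rather the relevant lemmas are stated for up-sets of $\mathbf{E}$ which are by definition subsets of $E$, so one works throughout inside $\mathsf{Up}(\mathbf{E})$ where $\le^c$ should be read as the complement relative to $E$, i.e. $E \setminus {\le}$; this matches the convention implicit in Lemma~\ref{lem:important_up-_and_down-sets}(iii).)

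The main obstacle I anticipate is bookkeeping around the complement: whether $\le^c$ means $X^2 \setminus {\le}$ or $E \setminus {\le}$, and making sure the converse/complement juggling stays inside $\mathsf{Up}(\mathbf{E})$ so that Lemma~\ref{lem:important_up-_and_down-sets} applies cleanly. Once that convention is pinned down, the membership claim is a two-line chain of citations and the equality is the elementary order-automorphism computation above. No induction or deep argument is required; essentially everything is assembled from Lemmas~\ref{lem:important_up-_and_down-sets} and~\ref{lem:order_identity} plus the defining property of $\alpha$.
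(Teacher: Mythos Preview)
Your proposal is correct and matches the paper's proof almost exactly: both argue membership in $\mathsf{Up}(\mathbf{E})$ by citing Lemma~\ref{lem:important_up-_and_down-sets} (the paper does this in one line without spelling out the chain (iii)$\to$(iv)$\to$(v) as you do), and both prove the equality by the same pointwise computation reducing to $v \le \alpha(u) \iff \alpha^{-1}(v) \le u$. Your worry about the complement convention is legitimate but your resolution is the right one: throughout this section $R^c$ is complement relative to $E$, which is exactly what makes Lemma~\ref{lem:important_up-_and_down-sets}(iii) well-posed; the paper leaves this implicit.
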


\begin{proof}
It follows from Lemma \ref{lem:important_up-_and_down-sets} that both  $\alpha\mathbin{;} \left(\le^{c}\right)^{\smile}$ and $\left(\le^{c}\right)^{\smile} \mathbin{;}  \alpha$ belong to $\mathsf{Up}\left(\mathbf E\right)$, so all we have to check is that $\alpha \mathbin{;} \left({\le}^{c}\right)^{\smile} = \left({\le}^{c}\right)^{\smile}\mathbin{;}  \alpha$. To this end, let $(x, y) \in \alpha\mathbin{;} \left({\le}^{c}\right)^{\smile}$.
Then we have $(\alpha(x), y) \in \left({\le}^{c}\right)^{\smile}$, which implies that $y \not\le \alpha(x)$. 
Since $\alpha^{-1}$ is an order automorphism, 
$\alpha^{-1}(y) \not\le \alpha^{-1}\left(\alpha(x)\right)=x$.  
Hence 
$(x, \alpha^{-1}(y)) \in \left(\le^{c}\right)^{\smile}$
and so $(x, y) \in \left(\le^{c}\right)^{\smile}\mathbin{;} \alpha$. 

For the reverse inclusion, let $(x, y) \in \left(\le^{c}\right)^{\smile}\mathbin{;} \alpha$. Then we have \break $(x, \alpha^{-1}(y)) \in \left(\le^{c}\right)^{\smile}$, and so $\alpha^{-1}\left(y\right) \not\le x$. But since $\alpha$ is an order automorphism, we get $y = \alpha\left(\alpha^{-1}\left(y\right)\right) \not\le \alpha(x)$. Hence, $(\alpha(x), y) \in \left(\le^{c}\right)^{\smile}$, which gives $(x, y) \in \alpha\mathbin{;}\left(\le^{c}\right)^{\smile}$. 
\end{proof}

We will now add $0 = \alpha\mathbin{;}\left(\le^{c}\right)^{\smile} = \left(\le^{c}\right)^{\smile}\mathbin{;} \alpha$ to the signature of the residuated distributive  lattice $\left\langle \mathsf{Up}\left(\mathbf E\right), \cap, \cup, \mathbin{;}, \leqslant, \backslash, / \right\rangle$  to obtain the (distributive) FL-algebra
$\left\langle \mathsf{Up}\left(\mathbf E\right), \cap, \cup, \mathbin{;}, \le, \backslash, /, 0\right\rangle$. 
The next lemma gives an alternative way of calculating ${\sim} R$ and $- R$ if we set $0 = \alpha\mathbin{;} \left(\le^{c}\right)^{\smile} = \left(\le^{c}\right)^{\smile}\mathbin{;} \alpha$. 

\begin{lem}\label{lem:twiddle_minus_alternative}
Let $\mathbf X = \left(X, \le\right)$ be a poset and $E$ an equivalence relation on $X$ such that ${\le} \subseteq E$
and let $\alpha: X \rightarrow X$ be an order automorphism of $\mathbf X$ such that $\alpha \subseteq E$. 
If  
$0 = \alpha\mathbin{;} \left(\le^{c}\right)^{\smile} = \left(\le^{c}\right)^{\smile}\mathbin{;} \alpha$ 
then 
${\sim} R = \left(R^c\right)^{\smile}\mathbin{;}\alpha$ and $-R = \alpha \mathbin{;} \left(R^{c}\right)^{\smile}$ for all $R \in \mathsf{Up}\left(\mathbf E\right)$.
\end{lem}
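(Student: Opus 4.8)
The plan is to unwind the definitions of ${\sim}$ and $-$ in the residuated distributive lattice $\left\langle \mathsf{Up}\left(\mathbf E\right), \cap, \cup, \circ, \leqslant, \backslash, /, 0\right\rangle$ and then simplify by the elementary identities for converse, complement and composition. Recall that ${\sim} R = R \backslash 0$ and $-R = 0/R$, where $R\backslash S = \left(R^{\smile}\circ S^c\right)^c$ and $R/S = \left(R^c\circ S^{\smile}\right)^c$.

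First I would compute $0^c$. Since $\alpha$ is the graph of an injective function, Lemma~\ref{lem:important_eq_injective_map}(ii)--(iii) allows the complement to pass across $\alpha$; combined with Proposition~\ref{prop:properties_binary_relations}(ii) (that $\left(S^{\smile}\right)^c = \left(S^c\right)^{\smile}$) and $\left(\le^c\right)^c = \le$, the two expressions for $0$ furnished by Lemma~\ref{lem:definition_of_0} give
$$0^c = \alpha \circ \le^{\smile} = \le^{\smile} \circ \alpha.$$
Then, starting from ${\sim} R = \left(R^{\smile} \circ 0^c\right)^c$, I substitute $0^c = \le^{\smile}\circ \alpha$ and use Proposition~\ref{prop:properties_binary_relations}(vii) to rewrite $R^{\smile} \circ \le^{\smile} = \left(\le \circ\, R\right)^{\smile}$. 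As $R \in \mathsf{Up}\left(\mathbf E\right)$, Lemma~\ref{lem:order_identity}(ii) collapses $\le \circ\, R$ to $R$, so $R^{\smile} \circ 0^c = R^{\smile} \circ \alpha$; a final application of Lemma~\ref{lem:important_eq_injective_map}(iii) and Proposition~\ref{prop:properties_binary_relations}(ii) yields ${\sim} R = \left(R^{\smile} \circ \alpha\right)^c = \left(R^{\smile}\right)^c \circ \alpha = \left(R^c\right)^{\smile} \circ \alpha$. The computation for $-R$ is symmetric: from $-R = \left(0^c \circ R^{\smile}\right)^c$ use $0^c = \alpha \circ \le^{\smile}$, then $\le^{\smile} \circ R^{\smile} = \left(R \circ \le\right)^{\smile} = R^{\smile}$ by Proposition~\ref{prop:properties_binary_relations}(vii) and Lemma~\ref{lem:order_identity}(ii), and finally move the complement past $\alpha$ with Lemma~\ref{lem:important_eq_injective_map}(ii) to reach $-R = \alpha \circ \left(R^c\right)^{\smile}$.

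There is no substantive obstacle beyond bookkeeping. The two points requiring care are: (a) choosing the correct one of the two equal forms of $0$ at each stage so that $\alpha$ ends up on the outside after the converse/complement manipulations, and (b) invoking the hypothesis $R \in \mathsf{Up}\left(\mathbf E\right)$ at precisely the moment Lemma~\ref{lem:order_identity}(ii) is applied --- note we never need $R^{\smile}$ to be an up-set, only the plain composition and converse identities of Proposition~\ref{prop:properties_binary_relations}, which hold for arbitrary binary relations on $X$. Everything else is a routine chain of equalities.
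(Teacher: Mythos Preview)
Your proof is correct and follows essentially the same route as the paper's: both unwind ${\sim}R = (R^\smile \circ 0^c)^c$ and $-R = (0^c \circ R^\smile)^c$, push the complement across $\alpha$ via Lemma~\ref{lem:important_eq_injective_map}, and collapse the $\le^\smile$ using Lemma~\ref{lem:order_identity}. The only cosmetic difference is that you simplify $R^\smile \circ \le^\smile$ by rewriting it as $(\le \circ\, R)^\smile$ and invoking Lemma~\ref{lem:order_identity}(ii), whereas the paper applies Lemma~\ref{lem:order_identity}(iii) directly to the down-set $R^\smile$; your variant is arguably cleaner since it avoids the implicit appeal to Lemma~\ref{lem:important_up-_and_down-sets}(iv).
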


\begin{proof}
Assume $0 = \alpha \mathbin{;}\left(\le^{c}\right)^{\smile}  = \left(\le^{c}\right)^{\smile} \mathbin{;} \alpha$ and let $R\in \mathsf{Up}(\mathbf E)$. Then:
\begin{align*}
{\sim} R = R\backslash 0  = \left(R^\smile \mathbin{;} 0^c\right)^c
& = \left(R^\smile \mathbin{;} \left(\left(\le^{c}\right)^{\smile} \mathbin{;} \alpha\right)^c\right)^c \\
& = \left(R^\smile \mathbin{;} \left(\left(\le^c\right)^\smile\right)^c \mathbin{;} \alpha \right)^c & \left(\textnormal{Lemma \ref{lem:important_eq_injective_map}}\right)\\
& = \left(R^\smile\mathbin{;} {\le}^\smile\mathbin{;} \alpha \right)^c &  \left(\textnormal{Proposition \ref{prop:properties_binary_relations}}\right)\\
& = \left(R^\smile \mathbin{;} \alpha\right)^c & \left(\textnormal{Lemma \ref{lem:order_identity}}\right)\\
& = \left(R^c\right)^\smile \mathbin{;}\alpha & \left(\textnormal{Lemma \ref{lem:important_eq_injective_map}}\right) 
\end{align*}
and 
\begin{align*}
- R = 0 / R  = \left(0^c \mathbin{;} R^\smile\right)^c
& = \left(\left(\alpha \mathbin{;}\left(\le^{c}\right)^{\smile}\right)^c\mathbin{;} R^\smile\right)^c \\
& = \left(\alpha\mathbin{;}\left(\left(\le^c\right)^\smile\right)^c \mathbin{;} R^\smile \right)^c & \left(\textnormal{Lemma \ref{lem:important_eq_injective_map}}\right)\\
& = \left(\alpha\mathbin{;} {\le}^\smile \mathbin{;} R^\smile \right)^c &  \left(\textnormal{Proposition \ref{prop:properties_binary_relations}}\right)\\
& = \left(\alpha \mathbin{;} R^\smile\right)^c & \left(\textnormal{Lemma \ref{lem:order_identity}}\right)\\
& = \alpha \mathbin{;} \left(R^c\right)^\smile & \left(\textnormal{Lemma \ref{lem:important_eq_injective_map}} \right)  
\end{align*}
\end{proof}

The lemma above can be generalised as follows: 
\begin{lem}\label{lem:general_twiddle_and _minus}
Let $\mathbf{X}=\left(X,\le\right)$ be a poset and $E$ an equivalence relation on $X$ such that ${\le} \subseteq E$.  Let $\alpha: X \to X$ be an order automorphism of $\mathbf X$ such that $\alpha \subseteq E$. 
Set  $0 = \alpha \mathbin{;} (\le^c)^\smile = (\le^c)^\smile \mathbin{;} \alpha$. Then the following hold for all $R \in \mathsf{Up}\left(\mathbf E\right)$:
\begin{enumerate}[\normalfont (i)]
\item ${\sim}^n R = \left(\alpha^\smile\right)^{\frac{n-1}{2}}\mathbin{;} \left(R^c\right)^\smile\mathbin{;} \alpha^{\frac{n+1}{2}}$ for all odd $n \ge 1$. 
\item ${\sim}^n R = \left(\alpha^\smile\right)^{\frac{n}{2}} \mathbin{;} R\mathbin{;} \alpha^{\frac{n}{2}}$ for all even $n \ge 2$. 
\item ${-}^n R = \alpha^{\frac{n+1}{2}}\mathbin{;} \left(R^c\right)^\smile\mathbin{;} \left(\alpha^\smile\right)^{\frac{n-1}{2}}$ for all odd $n \ge 1$. 
\item ${-}^n R = \alpha^{\frac{n}{2}}\mathbin{;} R\mathbin{;} \left(\alpha^\smile\right)^{\frac{n}{2}}$ for all even $n \ge 2$. 
\end{enumerate}	
\end{lem}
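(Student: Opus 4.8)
The plan is to prove all four identities by induction on $n$, split into two intertwined inductions: one for the ${\sim}$-formulas (i) and (ii), and a mirror-image one for the ${-}$-formulas (iii) and (iv). The base case is $n=1$: here $\frac{n-1}{2}=0$ and $\frac{n+1}{2}=1$, so (i) reads ${\sim}R=(R^c)^\smile\circ\alpha$ and (iii) reads ${-}R=\alpha\circ(R^c)^\smile$, which is exactly Lemma~\ref{lem:twiddle_minus_alternative}; the even formulas (ii) and (iv) have no $n=1$ instance and will first be produced at $n=2$ in the course of the induction. Throughout I would use three elementary facts about operations on binary relations: complement passes through composition with the graph of an injective map (Lemma~\ref{lem:important_eq_injective_map}(ii),(iii)); converse reverses composition and is an involution (Proposition~\ref{prop:properties_binary_relations}(i),(vii)); and, as a consequence of the latter, $(\gamma^k)^\smile=(\gamma^\smile)^k$ for such a graph $\gamma$. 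Note that $\alpha^\smile$ is the graph of the order automorphism $\alpha^{-1}$, so all of these apply to it as well, and that ${\sim}^nR\in\mathsf{Up}(\mathbf E)$ automatically, so Lemma~\ref{lem:twiddle_minus_alternative} may be reapplied at each step.

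For the inductive step of (i)/(ii), assume the appropriate formula for ${\sim}^nR$ holds for every $R\in\mathsf{Up}(\mathbf E)$. Apply Lemma~\ref{lem:twiddle_minus_alternative} once more to get ${\sim}^{n+1}R=\big(({\sim}^nR)^c\big)^\smile\circ\alpha$, substitute the inductive formula, and simplify the bracketed term: pull every leading $\alpha^\smile$-factor and every trailing $\alpha$-factor out of the complement by Lemma~\ref{lem:important_eq_injective_map}(ii),(iii), leaving $((R^c)^\smile)^c=R^\smile$ in the odd case (or $R^c$ in the even case); then take the converse, which reverses the order of the factors, replaces $R^\smile$ by $R$ (resp. $R^c$ by $(R^c)^\smile$), and exchanges powers of $\alpha$ with powers of $\alpha^\smile$. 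If $n=2k$ is even, the hypothesis ${\sim}^nR=(\alpha^\smile)^k\circ R\circ\alpha^k$ thus becomes ${\sim}^{n+1}R=(\alpha^\smile)^k\circ(R^c)^\smile\circ\alpha^{k+1}$, which is (i) for the odd number $n+1$ since $\frac{(n+1)-1}{2}=k$ and $\frac{(n+1)+1}{2}=k+1$; and if $n=2k+1$ is odd, the hypothesis ${\sim}^nR=(\alpha^\smile)^k\circ(R^c)^\smile\circ\alpha^{k+1}$ becomes ${\sim}^{n+1}R=(\alpha^\smile)^{k+1}\circ R\circ\alpha^{k+1}$, which is (ii) for the even number $n+1$ since $\frac{n+1}{2}=k+1$. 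This closes the induction, and in particular produces the $n=2$ case of (ii).

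Formulas (iii) and (iv) would be obtained by running the same induction with the alternative expression ${-}S=\alpha\circ(S^c)^\smile$ from Lemma~\ref{lem:twiddle_minus_alternative} in place of ${\sim}S=(S^c)^\smile\circ\alpha$; the only effect is that the single new $\alpha$ is pre-composed rather than post-composed, which is exactly what moves all powers of $\alpha$ to the left-hand side and all powers of $\alpha^\smile$ to the right-hand side, yielding the mirrored formulas. I do not expect any genuine obstacle: the content is entirely careful bookkeeping. The one point that has to be watched is the interaction of converse with the exponents — converse sends $\alpha^k$ to $(\alpha^\smile)^k$, turning ``left'' factors into ``right'' factors — since it is precisely this swap that drives the parity alternation between (i)/(iii) and (ii)/(iv) and makes the half-integer exponents line up after each step.
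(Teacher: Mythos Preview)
Your proposal is correct and uses the same ingredients as the paper's proof (induction on $n$, with Lemma~\ref{lem:twiddle_minus_alternative} as the base case and Lemma~\ref{lem:important_eq_injective_map} and Proposition~\ref{prop:properties_binary_relations} handling complements and converses through the $\alpha$-factors). The only organisational difference is that the paper proves (i) by a step-of-two induction, passing from $n=2k+1$ directly to $n=2k+3$ by applying ${\sim}{\sim}$ in one go, whereas you step by one, alternating between the odd formula (i) and the even formula (ii); your intertwined induction is slightly more economical since it establishes (i) and (ii) simultaneously rather than leaving (ii) as a separate exercise.
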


\begin{proof}
The proof is by induction on $n$. We will prove item (i) and leave the rest
to the reader.
The base case follows from Lemma \ref{lem:twiddle_minus_alternative}. Assume the statement holds 
for some 
$n = 2k + 1, k \ge 0$. We now show that the statement holds for $n = 2k + 3$. By assumption we get
\begin{align*}
{\sim}^{2k+3} R 
& = {\sim}{\sim}\left(\sim\right)^{2k + 1}R\\
& = {\sim}{\sim} \left(\left(\alpha^\smile\right)^{\frac{2k+1 -1}{2}}\, ; \left(R^c\right)^\smile\mathbin{;} \alpha^{\frac{2k+ 1+1}{2}}\right)\\ 
& = {\sim}{\sim} \left(\left(\alpha^\smile\right)^{k}\mathbin{;} \left(R^c\right)^\smile\mathbin{;} \alpha^{\frac{2k+ 2}{2}}\right).
\end{align*}
Hence 
\begin{align*}
{\sim}^{2k+3} R 
& = {\sim}\left[\left( \left(\left(\alpha^\smile\right)^{k}\mathbin{;} \left(R^c\right)^\smile\mathbin{;} \alpha^{\frac{2k+ 2}{2}}\right)^c\right)^\smile\mathbin{;} \alpha\right] & \left(\textnormal{Lemma \ref{lem:twiddle_minus_alternative}}\right)\\
& = {\sim}\left[\left( \left(\alpha^\smile\right)^{k}\mathbin{;} \left(\left(R^c\right)^\smile\right)^c\mathbin{;} \alpha^{\frac{2k+ 2}{2}}\right)^\smile\mathbin{;} \alpha\right] & \left(\textnormal{Lemma \ref{lem:important_eq_injective_map}}\right)\\
& = {\sim}\left[\left( \left(\alpha^\smile\right)^{k}\mathbin{;} R^\smile\mathbin{;} \alpha^{\frac{2k+ 2}{2}}\right)^\smile\mathbin{;} \alpha\right] & \left(\textnormal{Proposition \ref{prop:properties_binary_relations}}\right)\\
& = {\sim}\left[ \left(\alpha^\smile\right)^{\frac{2k+2}{2}}\mathbin{;} R\mathbin{;} \alpha^k\mathbin{;} \alpha\right] & \left(\textnormal{Proposition \ref{prop:properties_binary_relations}}\right)\\
& = {\sim}\left[ \left(\alpha^\smile\right)^{\frac{2k+2}{2}}\mathbin{;} R\mathbin{;} \alpha^{k+1}\right] \\
& = \left(\left(\left(\alpha^\smile\right)^{\frac{2k+2}{2}}\mathbin{;} R\mathbin{;} \alpha^{k+1}\right)^c\right)^\smile \mathbin{;} \alpha & \left(\textnormal{Lemma \ref{lem:twiddle_minus_alternative}}\right) \\
& = \left(\left(\alpha^\smile\right)^{\frac{2k+2}{2}}\mathbin{;} R^c\mathbin{;} \alpha^{k+1}\right)^\smile \mathbin{;}\alpha & \left(\textnormal{Lemma \ref{lem:important_eq_injective_map}}\right)\\
& = \left(\alpha^\smile\right)^{k+1}\mathbin{;} \left(R^c\right)^\smile\mathbin{;} \alpha^{\frac{2k+2}{2}}\mathbin{;} \alpha & \left(\textnormal{Proposition \ref{prop:properties_binary_relations}}\right)\\
& = \left(\alpha^\smile\right)^{\frac{2k+3 -1}{2}}\mathbin{;} \left(R^c\right)^\smile\mathbin{;}\alpha^{\frac{2k+3 + 1}{2}}. & & \qedhere
\end{align*}
\end{proof}

Lemmas~\ref{lem:twiddle_minus_alternative} and~\ref{lem:general_twiddle_and _minus} can now be used to prove the theorem below. 

\begin{thm}\label{thm:InFL-algebra}
Let $\mathbf X = \left(X, \le\right)$ be a poset and $E$ an equivalence relation on $X$ such that ${\le} \subseteq E$
and let $\alpha: X \rightarrow X$ be an order automorphism of $\mathbf X$ such that $\alpha \subseteq E$. 
\begin{enumerate}[\normalfont (i)]
\item If\, $0=\alpha \mathbin{;} (\leqslant^c)^\smile=(\leqslant^c)^\smile \mathbin{;} \alpha$, then 
$\langle \mathsf{Up}(\mathbf{E}), \cap, \cup, \mathbin{;}, \leqslant, 0, {\sim},{-}  \rangle$ is a distributive InFL-algebra. 
\item The algebra in {\upshape (i)} is 
$n$-periodic if and only if\, $|\alpha|=n$. 
\end{enumerate}
\end{thm}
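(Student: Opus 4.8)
The plan is to dispose of the two parts separately, in both cases reducing everything to the closed-form descriptions of ${\sim}$ and ${-}$ from Lemmas~\ref{lem:twiddle_minus_alternative} and~\ref{lem:general_twiddle_and _minus}. For part~(i), the discussion preceding the theorem already gives that $\langle\mathsf{Up}(\mathbf E),\cap,\cup,\circ,\leqslant,\backslash,/\rangle$ is a residuated distributive lattice, and Lemma~\ref{lem:definition_of_0} guarantees $0=\alpha\circ(\leqslant^c)^\smile\in\mathsf{Up}(\mathbf E)$, so adjoining $0$ yields a distributive FL-algebra, which we may rewrite in the term-equivalent signature $\langle\cap,\cup,\circ,\leqslant,0,{\sim},{-}\rangle$ of~\cite[Lemma~2.2]{GJ13}. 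Hence the only axiom left to verify is \textsf{(In)}, that is, ${\sim}{-}R={-}{\sim}R=R$ for all $R\in\mathsf{Up}(\mathbf E)$. I would plug in ${\sim}R=(R^c)^\smile\circ\alpha$ and ${-}R=\alpha\circ(R^c)^\smile$ from Lemma~\ref{lem:twiddle_minus_alternative} and simplify: Lemma~\ref{lem:important_eq_injective_map}(ii),(iii) moves complements past $\alpha$, Proposition~\ref{prop:properties_binary_relations}(i),(ii),(vii) handles the converses, and $\alpha^\smile\circ\alpha=\alpha\circ\alpha^\smile=\mathsf{id}_X$ (the converse of a bijection being its inverse) reduces the result to ${\sim}({-}R)=R\circ\mathsf{id}_X=R$ and symmetrically ${-}({\sim}R)=\mathsf{id}_X\circ R=R$. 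This is a short formal calculation.

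For part~(ii), the key point is purely algebraic: by \textsf{(In)} the maps ${\sim}$ and ${-}$ are mutually inverse bijections of $\mathsf{Up}(\mathbf E)$, so ${-}^n=({\sim}^n)^{-1}$ for every $n$; hence ${\sim}^n={-}^n$ as operations if and only if ${\sim}^n$ is its own inverse, that is, if and only if ${\sim}^{2n}=\mathsf{id}$. So the algebra is $n$-periodic exactly when $n$ is the least positive integer with ${\sim}^{2n}=\mathsf{id}$. Now Lemma~\ref{lem:general_twiddle_and _minus}(ii), applied with the even exponent $2n$, gives ${\sim}^{2n}R=(\alpha^\smile)^n\circ R\circ\alpha^n$. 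If $\alpha^n=\mathsf{id}_X$ then $(\alpha^\smile)^n=\mathsf{id}_X$ as well and the right-hand side is just $R$, so ${\sim}^{2n}=\mathsf{id}$. Conversely, I would test on a principal up-set ${\uparrow}(x,x)$, $x\in X$ (a member of $\mathsf{Up}(\mathbf E)$ since $(x,x)\in{\leqslant}\subseteq E$): composing graphs of bijections with principal up-sets shifts the generator, giving ${\uparrow}(x,x)\circ\alpha^n={\uparrow}(x,\alpha^n(x))$ and then $(\alpha^\smile)^n\circ{\uparrow}(x,\alpha^n(x))={\uparrow}(\alpha^n(x),\alpha^n(x))$, i.e.\ ${\sim}^{2n}{\uparrow}(x,x)={\uparrow}(\alpha^n(x),\alpha^n(x))$; since $\preceq$ is antisymmetric this equals ${\uparrow}(x,x)$ only when $\alpha^n(x)=x$, so ${\sim}^{2n}=\mathsf{id}$ forces $\alpha^n=\mathsf{id}_X$. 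Thus ${\sim}^{2n}=\mathsf{id}$ iff $\alpha^n=\mathsf{id}_X$ iff $|\alpha|$ divides $n$, and the least such $n$ is $|\alpha|$, which is the assertion. (If $\alpha$ has no finite order, no power ${\sim}^{2n}$ is the identity and the algebra is not $n$-periodic for any $n$, in agreement with the statement.)

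Most of this is routine: the calculation in part~(i) and the exponent bookkeeping needed to invoke Lemma~\ref{lem:general_twiddle_and _minus} with exponent $2n$ are mechanical. The one step that needs genuine care is the implication ${\sim}^{2n}=\mathsf{id}\Rightarrow\alpha^n=\mathsf{id}_X$: one has to exhibit a specific up-set on which $\alpha^n$ acts nontrivially unless $\alpha^n=\mathsf{id}_X$, and the principal up-sets ${\uparrow}(x,x)$ do this, but one must justify the composition identities above --- in particular that the pairs they produce never leave the $E$-class of $x$, which holds because $x$, every element $\leqslant$-comparable to $x$, and $\alpha^n(x)$ all lie in one $E$-class. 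It is also worth keeping the minimality clause in the definition of $n$-periodic in sight, since ${\sim}^m={-}^m$ holds for \emph{every} multiple of $|\alpha|$, and it is minimality that pins $n$ down to $|\alpha|$.
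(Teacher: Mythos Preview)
Your argument is correct. Part~(i) is exactly the paper's proof: plug in Lemma~\ref{lem:twiddle_minus_alternative} and cancel $\alpha^\smile\circ\alpha$.

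For part~(ii), however, you take a genuinely different and cleaner route. The paper splits into two cases according to the parity of $n$, invoking the odd and even formulas of Lemma~\ref{lem:general_twiddle_and _minus} separately, and in each case directly compares ${\sim}^n R$ with ${-}^n R$; for the converse direction it uses two different test up-sets, namely $({\downarrow}\{(x,x)\})^c$ in the odd case and ${\uparrow}\{(x,x)\}$ in the even case. Your observation that ${-}^n=({\sim}^n)^{-1}$, and hence ${\sim}^n={-}^n$ iff ${\sim}^{2n}=\mathrm{id}$, collapses both cases into one: only the even formula of Lemma~\ref{lem:general_twiddle_and _minus} is needed, and a single family of principal up-sets ${\uparrow}(x,x)$ suffices as witnesses. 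This buys a shorter and more uniform argument, and as a bonus you are explicit about the minimality clause (periodicity refers to the \emph{least} $n$), which the paper leaves implicit. The paper's approach, on the other hand, stays closer to the literal definition of $n$-periodic and exercises all four parts of Lemma~\ref{lem:general_twiddle_and _minus}.
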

 
\begin{proof}
Let $R\in\mathsf{Up}(\mathbf{E})$. By
Lemmas~\ref{lem:important_eq_injective_map} and \ref{lem:twiddle_minus_alternative} 
we have 
$${\sim}{-}R =  \left(\alpha \mathbin{ ;} (R^c)^\smile\right)^{c\smile} \mathbin{;} \alpha = \left( \alpha\mathbin{;} R^\smile\right)^\smile \mathbin{;} \alpha = (R^\smile)^\smile \mathbin{;} \alpha^\smile \mathbin{;} \alpha = R.
$$ 
A similar calculation shows that $-{\sim}R=R$, proving (i). For (ii), we must consider two cases. The first case is for $n$ odd. 
Assume that $|\alpha|=n$. Then  
\begin{align*}
{-}^n R & = \alpha^{\frac{n+1}{2}}\mathbin{;} \left(R^c\right)^\smile \mathbin{;} \left(\alpha^\smile\right)^{\frac{n-1}{2}} & \left(\textnormal{Lemma \ref{lem:general_twiddle_and _minus}}\right)\\
& = \left(\alpha^\smile\right)^{\frac{n-1}{2}}\mathbin{;} \left(R^c\right)^\smile\mathbin{;} \alpha^{\frac{n+1}{2}} & \left(\textnormal{Lemma \ref{lem:important_eq_injective_map}}\right)\\
& = {\sim}^n R & \left(\textnormal{Lemma \ref{lem:general_twiddle_and _minus}}\right)
\end{align*}
Now assume $|\alpha| \neq n$. That is, for some $x \in X$, we have $\alpha^n(x)\neq x$. Let $y = \alpha^n(x)$. 
We know that $(y,y)\preceq (x,x)$ if and only if $x \leqslant y$ and $y \leqslant x$. Since $y\neq x$ we have $(y,y) \in \left({\downarrow}\{(x,x)\}\right)^c$, which is an up-set of~$\mathbf E$. 
By Lemma~\ref{lem:general_twiddle_and _minus}(i),
${\sim}^n\left({\downarrow}\{(x,x)\}\right)^c
=(\alpha^\smile)^{\frac{n-1}{2}}\mathbin{;} \left({\downarrow}\{(x,x)\}\right)^\smile \mathbin{;} \alpha^{\frac{n+1}{2}}$. Since $y=\alpha^n(x)$ and $n$ is odd, it follows that
$(\alpha^{-1})^{\frac{n-1}{2}}(y) =\alpha^{\frac{n+1}{2}}(x)$. Now from 
$(x,x)\in \left({\downarrow}\{(x,x)\}\right)^{\!\smile}$ we get that  
$$\left( \alpha^{\frac{n-1}{2}}(x), \alpha^{\frac{n+1}{2}}(x)\right) \in 
(\alpha^\smile)^{\frac{n-1}{2}}\mathbin{;} \left({\downarrow}\{(x,x)\}\right)^{\!\smile} \mathbin{;} \alpha^{\frac{n+1}{2}}.$$ That is, $\left( \alpha^{\frac{n-1}{2}}(x),(\alpha^{-1})^{\frac{n-1}{2}}(y)\right)\in {\sim}^n \left({\downarrow}\{(x,x)\}\right)^c$. Now suppose we have $\left( \alpha^{\frac{n-1}{2}}(x),(\alpha^{-1})^{\frac{n-1}{2}}(y)\right)\in {-}^n\left({\downarrow}\{(x,x)\}\right)^c = \alpha^{\frac{n+1}{2}}\mathbin{;} \left({\downarrow}\{(x,x)\}\right)^\smile  \mathbin{;} \left(\alpha^\smile\right)^{\frac{n-1}{2}} $. This implies $\left(\alpha^{\frac{n+1}{2}}(\alpha^{\frac{n-1}{2}}(x)),y\right)=(y,y)\in {\downarrow}\{(x,x)\}$, a contradiction. 
Hence ${\sim}^n \left({\downarrow}\{(x,x)\}\right)^c \neq {-}^n\left({\downarrow}\{(x,x)\}\right)^c$.

If we assume $n$ is even and that $|\alpha|=n$, then:
\begin{align*}
{-}^n R & = \alpha^{\frac{n}{2}}\mathbin{;} R \mathbin{;} \left(\alpha^\smile\right)^{\frac{n}{2}} & \left(\textnormal{Lemma \ref{lem:general_twiddle_and _minus}}\right)\\
& = \left(\alpha^\smile\right)^{\frac{n}{2}}\mathbin{;} R\mathbin{;} \alpha^{\frac{n}{2}} & \left(\textnormal{Lemma \ref{lem:important_eq_injective_map}}\right)\\
& = {\sim}^n R & \left(\textnormal{Lemma \ref{lem:general_twiddle_and _minus}}\right)
\end{align*} 
For the converse, assume that $|\alpha|\neq n$. Hence there exists $x$ such that $\alpha^n(x)\neq x$. Let $y=\alpha^n(x)$. Since $y\neq x$ we have
$(y,y) \notin {\uparrow} \{(x,x)\}$. Now, ${\sim}^n \left( {\uparrow} \{(x,x)\}\right) = (\alpha^\smile)^{\frac{n}{2}} \mathbin{;} 
\left( {\uparrow} \{(x,x)\}\right) \mathbin{;} \alpha^{\frac{n}{2}}$. Since $(x,x)\in  {\uparrow} \{(x,x)\}$ we get $\left( \alpha^{\frac{n}{2}}(x), \alpha^{\frac{n}{2}}(x)\right) \in {\sim}^n \left({\uparrow}\{(x,x)\}\right)$. If   
$\left( \alpha^{\frac{n}{2}}(x), \alpha^{\frac{n}{2}}(x)\right) \in {-}^n \left({\uparrow}\{(x,x)\}\right)$, then $\left( (\alpha^{-1})^{\frac{n}{2}}(y), (\alpha^{-1})^{\frac{n}{2}}(y)\right) \in {-}^n \left({\uparrow}\{(x,x)\}\right)=\alpha^{\frac{n}{2}} \mathbin{;} \left( {\uparrow} \{(x,x)\}\right) \mathbin{;} (\alpha^{\smile})^{\frac{n}{2}}$.
This contradicts $(y,y) \notin {\uparrow}\{(x,x)\}$. Thus  ${\sim}^n\left({\uparrow}\{(x,x)\}\right)\neq {-}^n\left({\uparrow}\{(x,x)\}\right)$.
\end{proof}

The final step in constructing concrete DqRAs is to define a $'$ operation on $\mathsf{Up}(\mathbf{E})$ that will be involutive, and satisfies \textsf{(Dm)},
\textsf{(Di)} 
and \textsf{(Dp)}. First, we look at an example. 

\begin{ex}\label{ex:prime-for-6elmt}
Consider the posets $\left(X, \le\right)$ and $\left(X^2, \preceq\right)$ in Figure \ref{fig:ex_construction_6elements} again. Since $'$ is an involutive dual lattice isomorphism, it must be the case that either $\left\{\left(x,x\right), \left(x,y\right)\right\}' = \left\{\left(y,y\right), \left(x,y\right)\right\}$ and $\left\{\left(y,y\right), \left(x,y\right)\right\}' = \left\{\left(x,x\right), \left(x,y\right)\right\}$ or $\left\{\left(x,x\right), \left(x,y\right)\right\}' = \left\{\left(x,x\right), \left(x,y\right)\right\}$ and $\left\{\left(y,y\right), \left(x,y\right)\right\}' = \left\{\left(y,y\right), \left(x,y\right)\right\}$.  
In the first case, we get 
$\left(\left\{\left(x,x\right), \left(x,y\right)\right\}\mathbin{;} 
\left\{\left(y,y\right), \left(x,y\right)\right\}\right)' = \left\{\left(x,y\right)\right\}' = {\le}$ and $\left\{\left(x,x\right), \left(x,y\right)\right\}' + \left\{\left(y,y\right), \left(x,y\right)\right\}' = X^2$, which means \textsf{(Dp)} does not hold. However, it can be shown that 
\textsf{(Dp)} holds in the second case and that the resulting algebra is a distributive quasi relation algebra. Notice that 
$\left\{\left(x,x\right), \left(x,y\right)\right\}'  = \left\{\left(x,y\right), \left(y,x\right)\right\}\mathbin{;} \left\{\left(x,x\right), \left(x,y\right)\right\}^c\mathbin{;} \left\{\left(x,y\right), \left(y,x\right)\right\}$, so we get $\left\{\left(x,x\right), \left(x,y\right)\right\}'
 =\left\{\left(x,x\right), \left(x,y\right)\right\}$. Also, $'$ fixes $\left\{\left(y,y\right), \left(x,y\right)\right\}$ since
$\left\{\left(y,y\right), \left(x,y\right)\right\}' =\left\{\left(x,y\right), \left(y,x\right)\right\}\mathbin{;} \left\{\left(y,y\right), \left(x,y\right)\right\}^c\mathbin{;} \left\{\left(x,y\right), \left(y,x\right)\right\}$.  
It is important to observe that 
$\left\{\left(x,y\right), \left(y,x\right)\right\}$ is the graph of a dual order automorphism $\beta: X \to X$ defined by $\beta(x) = y$ and $\beta(y) = x$. 
\end{ex}

In contrast to 
Example~\ref{ex:prime-for-6elmt},
consider the posets $\left(X, \le\right)$ and $\left(X^2, \preceq\right)$ in Figure \ref{fig:ex_construction_50elements} again. The $50$-element 
lattice $\mathsf{Up}((X^2,\preceq))$ 
has two atoms, 
$\left\{\left(x,y\right)\right\}$ and $\left\{\left(x,z\right)\right\}$, and two co-atoms, $\left(\left\{\left(x,y\right)\right\}^c\right)^\smile$ and $\left(\left\{\left(x,z\right)\right\}^c\right)^\smile$. 
Since 
$'$ must map atoms to co-atoms and vice versa, and the fact that there are only two possible candidates for $0$ (see Example~\ref{ex:Vposet}), it can be shown that it is impossible to define an involutive operation $'$ that will satisfy \textsf{(Dm)} and \textsf{(Dp)}. 

Note that the poset in Figure~\ref{fig:ex_construction_50elements} is not self-dual, whereas the one in Figure~\ref{fig:ex_construction_6elements} is. Example~\ref{ex:prime-for-6elmt} suggests that a dual order-automorphism could be used in the definition of a suitable $'$ operation. 

\begin{lem}
Let $\mathbf X = \left(X, \le\right)$ be a poset and $E$ an equivalence relation on $X$ such that ${\le} \subseteq E$. Let $\alpha: X \to X$ be an order automorphism of $\mathbf X$ and $\beta: X \to X$ a dual order automorphism of $\mathbf X$ such that $\alpha, \beta \subseteq E$. If $R \in \mathsf{Up}\left(\mathbf E\right)$, then $\alpha\mathbin{;} \beta\mathbin{;} R^c\mathbin{;} \beta \in \mathsf{Up}\left(\mathbf E\right)$ and $\beta\mathbin{;} R^c\mathbin{;} \beta\mathbin{;}\alpha \in \mathsf{Up}\left(\mathbf E\right)$.
\end{lem}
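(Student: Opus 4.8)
The plan is to obtain the statement purely by chaining together three parts of Lemma~\ref{lem:important_up-_and_down-sets}, using associativity of composition (Proposition~\ref{prop:properties_binary_relations}(vi)) to reassociate where needed. The hypotheses of the present lemma---$\alpha$ an order automorphism of $\mathbf X$ with $\alpha \subseteq E$, and $\beta$ a dual order automorphism of $\mathbf X$ with $\beta \subseteq E$---are exactly those under which the relevant parts of Lemma~\ref{lem:important_up-_and_down-sets} apply, so no extra set-up is required. In particular, the containments $\alpha \subseteq E$ and $\beta \subseteq E$ are what guarantee (via transitivity of $E$) that all the composites below land inside $E$, so that speaking of up-sets of $\mathbf E$ is meaningful.

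First I would start from $R \in \mathsf{Up}(\mathbf E)$ and apply Lemma~\ref{lem:important_up-_and_down-sets}(iii) to get $R^c \in \mathsf{Down}(\mathbf E)$. Next, since $\beta$ is a dual order automorphism with $\beta \subseteq E$, Lemma~\ref{lem:important_up-_and_down-sets}(vi) applied to the down-set $R^c$ yields $\beta \circ R^c \circ \beta \in \mathsf{Up}(\mathbf E)$. Writing $S := \beta \circ R^c \circ \beta$, the final step is to apply Lemma~\ref{lem:important_up-_and_down-sets}(v) to $S$: since $\alpha$ is an order automorphism with $\alpha \subseteq E$, we obtain simultaneously $\alpha \circ S \in \mathsf{Up}(\mathbf E)$ and $S \circ \alpha \in \mathsf{Up}(\mathbf E)$. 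By associativity of composition these are precisely $\alpha \circ \beta \circ R^c \circ \beta$ and $\beta \circ R^c \circ \beta \circ \alpha$, which is exactly what is claimed.

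Since every step is a direct invocation of an already-established statement, I do not anticipate any genuine obstacle here; the only things to be careful about are bookkeeping matters---namely, that part (vi) is invoked with the down-set $R^c$ rather than with $R$ itself, and that both desired conclusions come out of the single application of part (v) to $S$. It is worth remarking that this lemma is presumably singled out because $\alpha \circ \beta \circ R^c \circ \beta$ (and its mirror image) will serve as the candidate for the $'$ operation in the construction to follow, so the content one really wants from it is exactly that these expressions stay inside $\mathsf{Up}(\mathbf E)$.
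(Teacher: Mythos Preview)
Your argument is correct and is essentially identical to the paper's own proof, which simply states that the result follows from Lemma~\ref{lem:important_up-_and_down-sets}. You have merely made explicit the chain of applications---(iii), then (vi), then (v)---that the paper leaves implicit.
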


\begin{proof}
This follows from Lemma \ref{lem:important_up-_and_down-sets}. 
\end{proof}

We are now ready to show that we can give $\mathsf{Up}(\mathbf{E})$ the structure of a distributive quasi relation algebra. A dual order automorphism $\beta$ will be used in the definition of $'$. In order to guarantee that $'$ is involutive, we require $\beta$ to be self-inverse. To ensure that ${\sim}$,$-$, and $'$ interact in the correct way (i.e. that (\textsf{Di}) and (\textsf{Dp}) is satisfied) we require that $\beta = \alpha \mathbin{;} \beta \mathbin{;} \alpha$. 

\begin{thm}\label{thm:main_result}
Let $\mathbf{X}=\left(X,\le\right)$ be a poset and $E$ an equivalence relation on $X$ such that ${\le} \subseteq E$.  Let $\alpha: X \to X$ be an order automorphism of $\mathbf X$ and $\beta: X \to X$ a self-inverse dual order automorphism of $\mathbf X$ such that $\alpha, \beta \subseteq E$ and $\beta = \alpha \mathbin{;} \beta\mathbin{;}\alpha$. 
Set $1= {\leqslant}$ and $0 = \alpha \mathbin{;} (\le^c)^\smile$.
For $R \in \mathsf{Up}(\mathbf E)$, define  
$R' =  \alpha\mathbin{;} \beta \mathbin{;} R^c \mathbin{;} \beta$.
Then the algebra $\mathbf{Dq}(\mathbf E) = \left\langle \mathsf{Up}\left(\mathbf{E}\right),\cap, \cup, \mathbin{;}, 1, 0, {\sim}, {-}, '\right\rangle$ 
is a distributive quasi relation algebra. If $\alpha$ is the identity, then $\mathbf{Dq}(\mathbf E)$ is a cyclic distributive quasi relation algebra.	
\end{thm}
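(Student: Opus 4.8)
The plan is to build everything on top of Theorem~\ref{thm:InFL-algebra}. Since $0=\alpha\circ(\le^c)^\smile=(\le^c)^\smile\circ\alpha$ by Lemma~\ref{lem:definition_of_0}, that theorem already gives that $\langle\mathsf{Up}(\mathbf{E}),\cap,\cup,\circ,\le,0,{\sim},{-}\rangle$ is a distributive InFL-algebra, and the residuated distributive lattice structure was established just before Example~\ref{ex:prime-for-6elmt}. So it suffices to check four things about $R\mapsto R'=\alpha\circ\beta\circ R^c\circ\beta$: that it maps $\mathsf{Up}(\mathbf{E})$ to itself, that it is involutive, that it satisfies \textsf{(Dm)} (hence is a dual lattice isomorphism, by involution), and that it satisfies \textsf{(Di)} and \textsf{(Dp)}. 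Well-definedness is exactly the lemma stated immediately before the theorem (taking $\alpha$ and $\beta$ as there).

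I will work throughout in the monoid of binary relations on $X$, using the following facts repeatedly: $\alpha$ and $\beta$ are graphs of bijections, so $\alpha\circ\alpha^\smile=\alpha^\smile\circ\alpha=\text{id}_X$; $\beta$ is self-inverse, so $\beta\circ\beta=\text{id}_X$ and $\beta^\smile=\beta$; and the hypothesis $\beta=\alpha\circ\beta\circ\alpha$, used in the equivalent forms $\alpha\circ\beta\circ\alpha=\beta$, $\alpha\circ\beta\circ\alpha\circ\beta=\text{id}_X$, and $\beta\circ\alpha^\smile=\alpha\circ\beta$. The auxiliary identities are $(\gamma\circ R)^c=\gamma\circ R^c$ and $(R\circ\gamma)^c=R^c\circ\gamma$ for injective $\gamma$ (Lemma~\ref{lem:important_eq_injective_map}), $(R^\smile)^c=(R^c)^\smile$ and $(R\circ S)^\smile=S^\smile\circ R^\smile$ (Proposition~\ref{prop:properties_binary_relations}), and the formulas ${\sim}R=(R^c)^\smile\circ\alpha$ and ${-}R=\alpha\circ(R^c)^\smile$ from Lemma~\ref{lem:twiddle_minus_alternative}. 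With these in hand, one first computes $(R')^c=\alpha\circ\beta\circ R\circ\beta$ (two applications of the complement identities), and then $R''=\alpha\circ\beta\circ(R')^c\circ\beta=(\alpha\circ\beta\circ\alpha\circ\beta)\circ R\circ(\beta\circ\beta)=R$; so $'$ is involutive (this is the step where $\beta=\alpha\circ\beta\circ\alpha$ is essential). For \textsf{(Dm)}: $(R\cup S)^c=R^c\cap S^c$, and composing on the left by the injection $\alpha\circ\beta$ and on the right by $\beta$ distributes over $\cap$ by Lemma~\ref{lem:important_eq_injective_map}(iv),(v), giving $(R\cup S)'=R'\cap S'$; involution then upgrades this to a dual lattice isomorphism.

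For \textsf{(Di)} I would compute $({\sim}R)'=\alpha\circ\beta\circ R^\smile\circ\alpha\circ\beta$ and ${-}(R')=\alpha\circ\beta\circ R^\smile\circ\beta\circ\alpha^\smile$, and these agree precisely because $\beta\circ\alpha^\smile=\alpha\circ\beta$. For \textsf{(Dp)} I would use the identity $a+b={-}({\sim}b\cdot{\sim}a)$ valid in any InFL-algebra (recalled before Lemma~\ref{lem:doubleminuscdot}): a short computation gives ${\sim}(R')=\beta\circ R^\smile\circ\beta$ (and likewise for $S$), whence ${\sim}(S')\circ{\sim}(R')=\beta\circ S^\smile\circ R^\smile\circ\beta=\beta\circ(R\circ S)^\smile\circ\beta$; applying ${-}$ and simplifying the nested converses and complements using $\beta^\smile=\beta$ returns $\alpha\circ\beta\circ(R\circ S)^c\circ\beta=(R\circ S)'$, which is \textsf{(Dp)}. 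Finally, if $\alpha=\text{id}_X$ then $0=(\le^c)^\smile$ and ${\sim}R=(R^c)^\smile={-}R$ by Lemma~\ref{lem:twiddle_minus_alternative}, so the InFL-reduct, and hence the qRA, is cyclic. I expect every individual step to be a routine application of the lemmas already proved; the only genuinely delicate point is the bookkeeping in the \textsf{(Dp)} computation — keeping track of exactly where $\beta\circ\beta=\text{id}_X$ and $\beta^\smile=\beta$ are invoked among the converses and complements — and, more conceptually, recognising that the conditions imposed on $\alpha$ and $\beta$ (self-inverse $\beta$, and $\beta=\alpha\circ\beta\circ\alpha$) are exactly what is needed to make involution, \textsf{(Di)} and \textsf{(Dp)} all come out right.
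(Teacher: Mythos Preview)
Your proposal is correct and follows essentially the same approach as the paper: build on Theorem~\ref{thm:InFL-algebra}, then verify involutivity, \textsf{(Dm)}, \textsf{(Di)}, \textsf{(Dp)} by direct relational calculus using Lemmas~\ref{lem:important_eq_injective_map} and~\ref{lem:twiddle_minus_alternative} together with $\beta^\smile=\beta$ and $\beta=\alpha\circ\beta\circ\alpha$. The only cosmetic differences are that for \textsf{(Dp)} the paper uses $a+b={\sim}({-}b\cdot{-}a)$ whereas you use the equivalent form $a+b={-}({\sim}b\cdot{\sim}a)$ (your route via ${\sim}(R')=\beta\circ R^\smile\circ\beta$ is in fact a line or two shorter), and for \textsf{(Di)} the paper presents a single chain of equalities while you compute both sides separately and match them via $\beta\circ\alpha^\smile=\alpha\circ\beta$.
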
	

\begin{proof}
Since $\left\langle \mathsf{Up}\left(\mathbf{E}\right),\cap, \cup, \mathbin{;}, 1, 0, {\sim}, {-}\right\rangle$ is an InFL-algebra (Theorem \ref{thm:InFL-algebra}), it will follow that $\mathbf{Dq}\left(\mathbf E\right)$ is a quasi relation algebra if we can show that $R'' = R$ for all $R \in  \mathsf{Up}\left(\mathbf{E}\right)$ and that 
\textsf{(Dm)}, \textsf{(Di)} and \textsf{(Dp)} hold. 
Let $R \in \mathsf{Up}(\mathbf{E})$. 
Then: 
\begin{align*}
R'' & = \alpha \mathbin{;} \beta\mathbin{;} \left(R'\right)^c\mathbin{;} \beta & \left(\textnormal{definition of } '\right)\\
& = \alpha \mathbin{;} \beta\mathbin{;} \left(\alpha \mathbin{;} \beta\mathbin{;} R^c \mathbin{;} \beta\right)^c\mathbin{;} \beta  & \left(\textnormal{definition of } '\right) \\
& = \alpha \mathbin{;} \beta\mathbin{;} \alpha \mathbin{;} \beta \mathbin{;} \left(R^c\right)^c \mathbin{;} \beta\mathbin{;} \beta  & \left(\textnormal{Lemma \ref{lem:important_eq_injective_map}}\right)\\
& = \beta \mathbin{;} \beta\mathbin{;} R \mathbin{;} \beta\mathbin{;} \beta & \left(\beta = \alpha\mathbin{;} \beta \mathbin{;} \alpha\right)\\ 
& = R  & \left(\beta = \beta^\smile\right)
\end{align*}	
	
To show that \textsf{(Dm)} holds, let $R, S \in  \mathsf{Up}\left(\mathbf{E}\right)$. Then: 
\begin{align*}
\left(R\cup S\right)' & = \alpha \mathbin{;} \beta \mathbin{;} \left(R \cup S\right)^c\mathbin{;} \beta & \left(\textnormal{definition of } '\right)\\
& = \alpha \mathbin{;} \beta\mathbin{;} \left(R^c \cap S^c\right)\mathbin{;} \beta \\	
& = \left(\alpha\mathbin{;} \beta \mathbin{;} R^c\mathbin{;} \beta\right) \cap \left(\alpha\mathbin{;} \beta\mathbin{;} S^c\mathbin{;} \beta\right) & \left(\textnormal{Lemma \ref{lem:important_eq_injective_map}}\right) \\	
& = R' \cap S' & \left(\textnormal{definition of } '\right)
\end{align*}	
	
For \textsf{(Di)}, let $R \in \mathsf{Up}\left(\mathbf E\right)$. Then we have
\begin{align*}
\left({\sim} R\right)' & = \alpha\mathbin{;} \beta \mathbin{;} \left({\sim}R\right)^c \mathbin{;} \beta & \left(\textnormal{definition of } '\right)\\
& = \alpha \mathbin{;} \beta \mathbin{;} \left(\left(R^c\right)^\smile \mathbin{;} \alpha \right)^c \mathbin{;} \beta & \left(\textnormal{Lemma \ref{lem:twiddle_minus_alternative}}\right)\\
& = \left(\alpha \mathbin{;} \beta \mathbin{;} \left(R^c\right)^\smile \mathbin{;} \alpha \mathbin{;} \beta\right)^c & \left(\textnormal{Lemma \ref{lem:important_eq_injective_map}}\right)\\
& = \left(\alpha \mathbin{;} \beta \mathbin{;} \left(\alpha^\smile \mathbin{;} R^c\right)^\smile \mathbin{;} \beta\right)^c & \left(\textnormal{Proposition \ref{prop:properties_binary_relations}}\right)\\
& = \left(\alpha \mathbin{;} \left(\beta^\smile \mathbin{;} \alpha^\smile \mathbin{;} R^c \mathbin{;} \beta^\smile\right)^\smile\right)^c& \left(\textnormal{Proposition \ref{prop:properties_binary_relations}}\right)\\
& = \left(\alpha \mathbin{;} \left(\beta \mathbin{;} \alpha^\smile \mathbin{;} R^c \mathbin{;} \beta\right)^\smile\right)^c & \left(\beta = \beta^\smile\right)\\
& = \alpha \mathbin{;} \left(\left(\beta \mathbin{;} \alpha^\smile \mathbin{;}  R^c \mathbin{;} \beta\right)^\smile\right)^c & \left(\textnormal{Lemma \ref{lem:important_eq_injective_map}}\right)\\
& = \alpha \mathbin{;} \left(\left(\alpha\mathbin{;} \beta \mathbin{;} R^c \mathbin{;} \beta\right)^\smile\right)^c & \left(
\beta\mathbin{;}\alpha^\smile = \alpha \mathbin{;} \beta\right)\\
& = \alpha \mathbin{;} \left(\left(R'\right)^c\right)^\smile & \left(\textnormal{definition of } '\right)\\
& = - \left(R'\right) & \left(\textnormal{Lemma \ref{lem:twiddle_minus_alternative}}\right)
\end{align*}	
	
For \textsf{(Dp)}, let $R, S \in \mathsf{Up}\left(\mathbf E\right)$. Then:
\begin{align*}
& R' + S' \\
& = {\sim}\left(-\left(S'\right)\mathbin{;} -\left(R'\right)\right)\\
& = {\sim}\left(-\left(\alpha \mathbin{;} \beta \mathbin{;} S^c\mathbin{;} \beta\right) \mathbin{;} - \left(\alpha \mathbin{;} \beta \mathbin{;} R^c \mathbin{;} \beta\right)\right) & \left(\textnormal{definition of }'\right)\\
& = {\sim} \left(\alpha \mathbin{;} \left(\left(\alpha \mathbin{;} \beta \mathbin{;} S^c\mathbin{;} \beta\right)^c\right)^\smile \mathbin{;} \alpha \mathbin{;} \left( \left(\alpha \mathbin{;} \beta \mathbin{;} R^c \mathbin{;} \beta\right)^c\right)^\smile \right) & \left(\textnormal{Lemma \ref{lem:twiddle_minus_alternative}}\right)\\
& = {\sim} \left(\alpha \mathbin{;} \left(\alpha \mathbin{;} \beta \mathbin{;} S\mathbin{;} \beta\right)^\smile \mathbin{;} \alpha \mathbin{;} \left(\alpha \mathbin{;} \beta \mathbin{;} R \mathbin{;} \beta\right)^\smile \right) & \left(\textnormal{Lemma \ref{lem:important_eq_injective_map}}\right)\\
& = {\sim} \left(\alpha \mathbin{;} \beta^\smile \mathbin{;}  S^\smile \mathbin{;} \beta^\smile \mathbin{;} \alpha^\smile \mathbin{;} \alpha \mathbin{ ;} \beta^\smile \mathbin{;} R^\smile \mathbin{;} \beta^\smile \mathbin{;} \alpha^\smile\right) & \left(\textnormal{Proposition \ref{prop:properties_binary_relations}}\right) \\
& = {\sim}\left(\alpha \mathbin{;} \beta \mathbin{;} S^\smile \mathbin{;}\beta \mathbin{;} \alpha^\smile \mathbin{;} \alpha \mathbin{;} \beta \mathbin{;} R^\smile \mathbin{;} \beta \mathbin{;} \alpha^\smile\right) & \left(\beta = \beta^\smile\right)\\
& = {\sim}\left(\alpha \mathbin{;} \beta \mathbin{;} S^\smile \mathbin{;} R^\smile \mathbin{;} \beta \mathbin{;} \alpha^\smile\right) & \left(\beta = \beta^\smile\right)\\
& = \left(\left(\alpha \mathbin{;} \beta \mathbin{;} S^\smile \mathbin{;} R^\smile \mathbin{;} \beta \mathbin{;} \alpha^\smile\right)^c\right)^\smile \mathbin{;} \alpha & \left(\textnormal{Lemma \ref{lem:twiddle_minus_alternative}}\right) \\
& = \left(\alpha \mathbin{;} \beta^\smile \mathbin{;} R \mathbin{;} S \mathbin{;} \beta^\smile \mathbin{;} \alpha^\smile\right)^c \mathbin{;} \alpha & \left(\textnormal{Proposition \ref{prop:properties_binary_relations}}\right)\\
& = \left(\alpha \mathbin{;} \beta \mathbin{;} R \mathbin{;} S \mathbin{;} \beta \mathbin{;} \alpha^\smile\right)^c \mathbin{;} \alpha & \left(\beta = \beta^\smile\right)\\
& = \alpha \mathbin{;} \beta \mathbin{;} \left(R \mathbin{;} S\right)^c \mathbin{;} \beta \mathbin{;} \alpha^\smile \mathbin{;} \alpha & \left(\textnormal{Lemma \ref{lem:important_eq_injective_map}}\right)\\
& = \alpha \mathbin{;} \beta \mathbin{;} \left(R \mathbin{;} S\right)^c \mathbin{;} \beta\\
& = \left(R \mathbin{;} S\right)' & \left(\textnormal{definition of }'\right)
\end{align*}
Finally, if $\alpha = \text{id}_X$, then by Theorem~\ref{thm:InFL-algebra} the DqRA will be cyclic. 
\end{proof}

Following the terminology for relation algebras~(cf. \cite[p.142]{Mad06}), we 
will refer to the algebra $\mathbf{Dq}(\mathbf E)$ as an \emph{equivalence distributive quasi relation algebra}. The class of equivalence distributive quasi relation algebras will be denoted by $\mathsf{EDqRA}$.

Since $X^2 = X \times X$ is an equivalence relation, we obtain: 

\begin{cor}\label{cor:FDqRA}
Let $\mathbf{X}=\left(X,\le\right)$ be a poset, and let 
$\alpha:X \to X$ be an order automorphism of $\mathbf X$, 
$\beta: X \to X$ a self-inverse dual order automorphism of $\mathbf X$,
and $\beta = \alpha \mathbin{;} \beta\mathbin{;} \alpha$. 
Set $1= {\leqslant}$ and $0 = \alpha \mathbin{;}(\le^c)^\smile = (\le^c)^\smile \mathbin{;} \alpha$. 
For $R \in \mathsf{Up}\left(\left(X^2, \preceq\right)\right)$, define  
$R' =  \alpha\mathbin{;} \beta \mathbin{;}  R^c \mathbin{;} \beta$.
Then the algebra $$\mathbf{Dq}\left(\left(X^2, \preceq\right)\right) = \left\langle \mathsf{Up}\left(\left(X^2, \preceq\right)\right),\cap, \cup, \mathbin{;}, 1, 0, {\sim}, {-}, '\right\rangle$$
is a distributive quasi relation algebra. 
\end{cor}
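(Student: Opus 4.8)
The plan is to obtain this as a direct specialisation of Theorem~\ref{thm:main_result}, taking the equivalence relation $E$ to be the universal relation $X^2$. The first step is to record that $X^2 = X\times X$ really is an equivalence relation on $X$: it is reflexive, symmetric and transitive, being the largest binary relation on $X$. Hence the order $\preceq$ defined on $X^2$ in the corollary is precisely the order $\preceq$ defined on a general $E$ in Section~\ref{subsec:construct} specialised to $E = X^2$, and $\mathsf{Up}\left(\left(X^2,\preceq\right)\right)$ is exactly $\mathsf{Up}(\mathbf E)$ in that case. So the residuated distributive lattice $\left\langle \mathsf{Up}\left(\left(X^2,\preceq\right)\right),\cap,\cup,\circ,\leqslant,\backslash,/\right\rangle$, the designated element $0$, the operations ${\sim}$, ${-}$, and the operation $R' = \alpha\circ\beta\circ R^c\circ\beta$ are all instances of the corresponding data in Theorem~\ref{thm:main_result}.

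The second step is to check that every hypothesis of Theorem~\ref{thm:main_result} is automatically met with $E = X^2$. The containment $\leqslant\; \subseteq E$ is immediate, since $X^2$ contains every binary relation on $X$; for the same reason $\alpha, \beta \subseteq X^2 = E$, as $\alpha$ and $\beta$ are (graphs of) functions $X \to X$, so the conditions $\alpha,\beta \subseteq E$ are vacuous. The remaining conditions -- that $\alpha$ is an order automorphism of $\mathbf X$, that $\beta$ is a self-inverse dual order automorphism of $\mathbf X$, and that $\beta = \alpha\circ\beta\circ\alpha$ -- are assumed verbatim in the corollary. The second expression $0 = \left(\leqslant^c\right)^\smile\circ\alpha$ for the designated element is supplied by Lemma~\ref{lem:definition_of_0}. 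Therefore Theorem~\ref{thm:main_result} applies directly and gives that $\mathbf{Dq}\left(\left(X^2,\preceq\right)\right)$ is a distributive quasi relation algebra.

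Since the argument is a pure instantiation, there is no genuine obstacle; the only point requiring (minimal) care is confirming that none of the structural hypotheses imposed on $E$ in Theorem~\ref{thm:main_result} say anything nontrivial when $E = X^2$, which, as noted above, they do not. One may optionally add that $\mathbf{Dq}\left(\left(X^2,\preceq\right)\right)$ is what we call the \emph{full} distributive quasi relation algebra on $\mathbf X$, and that it is cyclic whenever $\alpha = \text{id}_X$, again by the corresponding clause of Theorem~\ref{thm:main_result}.
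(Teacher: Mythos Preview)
Your proposal is correct and matches the paper's approach exactly: the paper introduces the corollary with the single line ``Since $X^2 = X\times X$ is an equivalence relation, we obtain:'' and gives no further proof, so your argument simply spells out this specialisation of Theorem~\ref{thm:main_result} in detail.
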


We will refer to this algebra as a \emph{full distributive quasi relation algebra} and denote the class of full distributive quasi relation algebras by $\mathsf{FDqRA}$. 

\begin{rem}\label{rem:aba=b}
To see that the condition $\alpha\mathbin{;} \beta \mathbin{;} \alpha = \beta$ is essential, consider the three-element discrete poset $X=\{x,y,z\}$ with $\alpha=\{(x,y),(y,x),(z,z)\}$ and $\beta=\{(x,y),(y,z),(z,x)\}$. 
Hence
$(x,z) \in \alpha \mathbin{;} \beta \mathbin{;} \alpha$ but $(x,z)\notin \beta$.~Now 
$\{(x,z)\} \in \mathsf{Up}((X^2,\preceq))$ and 
$({\sim}\{(x,z)\})'=\{(x,z)\}$ but $-\{(x,z)\}'=\{(y,x)\}$.
\end{rem}

\begin{rem}\label{rem:Rstar}
It would be natural to ask: 
is the algebra obtained by setting $R' =\beta\mathbin{;} R^c\mathbin{;} \beta\mathbin{;} \alpha$ also a distributive qRA? The answer is ``yes''. If we set $R' = \alpha\mathbin{;} \beta\mathbin{;} R^c\mathbin{;} \beta$ like in Theorem \ref{thm:main_result}, then 
\begin{align*}
R^{\triangledown 1} = {\sim}{\sim} R' & = \left[\left[\left(\left(\alpha\mathbin{;} \beta\mathbin{;} R^c\mathbin{;} \beta\right)^c\right)^\smile \mathbin{;} \alpha\right]^c\right]^\smile \mathbin{;} \alpha \\
& = \alpha^\smile \mathbin{;} \left(\alpha\mathbin{;} \beta \mathbin{;} R^c\mathbin{;} \beta\right)\mathbin{;} \alpha\\
& = \beta \mathbin{;} R^c\mathbin{;} \beta \mathbin{;} \alpha
\end{align*}
and the resulting algebra is a  DqRA according to Theorem~\ref{thm:star-qRA}. 
\end{rem} 

Now we will show that the DqRAs generated in this way (i.e. by letting $R'=\beta \mathbin{;} R^c \mathbin{;}  \beta \mathbin{;} \alpha$) are all in $\mathsf{EDqRA}$. First, we need 
a technical lemma. 
The lemma below is easily proved, with Lemma~\ref{lem:comp_and_conv_in_E} needed for (ii).

\begin{lem}\label{lem:properties_of_alpha^nbeta_and_betaalpha^n}
Let $\mathbf{X}=\left(X,\le\right)$ be a poset and $E$ an equivalence relation on $X$ such that ${\le} \subseteq E$.  Let $\alpha:X \to X$ be an order automorphism of $\mathbf X$ and $\beta: X \to X$ a self-inverse dual order automorphism of $\mathbf X$ such that $\alpha, \beta \subseteq E$ and $\beta = \alpha \mathbin{;} \beta\mathbin{;} \alpha$. Then the following hold for all $n \in \omega$:
\begin{enumerate}[\normalfont (i)]
\item $\alpha^n\mathbin{;} \beta$ and $\beta\mathbin{;} \alpha^n$ are dual order automorphisms of $\mathbf X$;
\item $\alpha^n\mathbin{;} \beta \subseteq E$ and $\beta\mathbin{;} \alpha^n \subseteq E$; 
\item $\alpha^n\mathbin{;} \beta$ and $\beta\mathbin{;} \alpha^n$ are self-inverse;
\item $\alpha \mathbin{;} \left( \alpha^{n}\mathbin{;} \beta\right)\mathbin{;} \alpha = \alpha^n\mathbin{;} \beta$ and $\alpha\mathbin{;} \left( \beta \mathbin{;} \alpha^{n}\right) \mathbin{;} \alpha = \beta\mathbin{;} \alpha^n$.
\end{enumerate}
\end{lem}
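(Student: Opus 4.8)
The plan is to reduce all four items to two ``commutation'' identities that repackage the hypothesis $\beta = \alpha\circ\beta\circ\alpha$. Composing that equation on the left, and then on the right, with $\alpha^\smile$, and using $\alpha^\smile\circ\alpha = \alpha\circ\alpha^\smile = \text{id}_X$ (valid since an order automorphism is a bijection), gives $\beta\circ\alpha = \alpha^\smile\circ\beta$ and $\alpha\circ\beta = \beta\circ\alpha^\smile$. A routine induction on $n$ — using these two identities together with Proposition~\ref{prop:properties_binary_relations}(vii) to slide a converse past a power of $\alpha$, and the fact that powers of $\alpha$ commute — then upgrades them to
\[
\alpha^n\circ\beta = \beta\circ\left(\alpha^n\right)^\smile \qquad\text{and}\qquad \beta\circ\alpha^n = \left(\alpha^n\right)^\smile\circ\beta \qquad (n\in\omega).
\]
With these in hand, the rest is bookkeeping with relational composition.

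For item (i): each $\alpha^n$ is an order automorphism of $\mathbf X$ (a composite of order automorphisms, with $\alpha^0 = \text{id}_X$), and a composite of an order automorphism with a dual order automorphism, taken in either order, is again a dual order automorphism; hence $\alpha^n\circ\beta$ and $\beta\circ\alpha^n$ are dual order automorphisms. For item (ii): reflexivity of $E$ gives $\text{id}_X\subseteq E$, so an induction via Lemma~\ref{lem:comp_and_conv_in_E}(i) (with the transitive relation $T=E$, and $R=S=\alpha$, respectively $R=\alpha^n$, $S=\alpha$) yields $\alpha^n\subseteq E$ for every $n$; one further application of Lemma~\ref{lem:comp_and_conv_in_E}(i), with $\alpha^n$ and $\beta$ in the two slots, gives $\alpha^n\circ\beta\subseteq E$ and $\beta\circ\alpha^n\subseteq E$.

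For item (iii): Proposition~\ref{prop:properties_binary_relations}(vii) together with $\beta = \beta^\smile$ gives $\left(\alpha^n\circ\beta\right)^\smile = \beta^\smile\circ\left(\alpha^n\right)^\smile = \beta\circ\left(\alpha^n\right)^\smile$, and by the first commutation identity this equals $\alpha^n\circ\beta$; hence $\alpha^n\circ\beta$ is self-inverse, and the computation for $\beta\circ\alpha^n$ is symmetric, using the second identity. For item (iv): substituting the commutation identities into the outer products and collapsing with $\alpha\circ\alpha^\smile=\alpha^\smile\circ\alpha=\text{id}_X$ gives $\alpha\circ(\alpha^n\circ\beta)\circ\alpha = \alpha^{n+1}\circ\beta\circ\alpha = \alpha^{n+1}\circ\alpha^\smile\circ\beta = \alpha^n\circ\beta$ and, likewise, $\alpha\circ(\beta\circ\alpha^n)\circ\alpha = \alpha\circ\beta\circ\alpha^{n+1} = \beta\circ\alpha^\smile\circ\alpha^{n+1} = \beta\circ\alpha^n$.

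There is no genuine obstacle: once the two commutation identities are isolated, each item is a one-line calculation. The only point that needs care is the composition convention — the identities $\beta\circ\alpha = \alpha^\smile\circ\beta$ and $\alpha\circ\beta = \beta\circ\alpha^\smile$ must be extracted from $\beta = \alpha\circ\beta\circ\alpha$ under the paper's relational composition, which is precisely why these (and not their mirror images) are the forms used throughout; this is also the form already invoked in the proof of Theorem~\ref{thm:main_result}.
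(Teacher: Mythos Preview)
Your proof is correct; the paper itself omits the argument entirely (saying only that the lemma ``is easily proved, with Lemma~\ref{lem:comp_and_conv_in_E} needed for (ii)''), so your write-up is more detailed than what appears there but follows the same intended route of direct computation from the hypotheses. Your device of first isolating the commutation identities $\alpha^n\circ\beta = \beta\circ(\alpha^n)^\smile$ and $\beta\circ\alpha^n = (\alpha^n)^\smile\circ\beta$ is a clean way to package what would otherwise be four separate inductions, and the use of $\gamma^\smile=\gamma$ as the criterion for ``self-inverse'' matches the paper's own convention (cf.\ the proof of Theorem~\ref{thm:main_result}).
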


Lemmas~\ref{lem:general_twiddle_and _minus} and~\ref{lem:properties_of_alpha^nbeta_and_betaalpha^n} will now be 
used to provide an alternative expression for $R^{\triangledown n}$ in terms of compositions of $\alpha$ and $\beta$. 

\begin{prop}\label{prop:star_qRA-in_EDqRA}
Let $\mathbf{A}=\left\langle 
\mathsf{Up}(\mathbf{E}), \cap, \cup, \mathbin{;}, 1,0, {\sim},{-}, '\right\rangle$ be the DqRA given by Theorem~\ref{thm:main_result}. 
Then there are self-inverse dual order automorphisms $\beta_{\triangledown n}$ and $\beta_{\vartriangle n}$ of $\mathbf X$ such that
\begin{enumerate}[\normalfont (i)] 
\item $\beta_{\triangledown n}, \beta_{\vartriangle n} \subseteq E$;
\item $\beta_{\triangledown n} = \alpha\mathbin{;} \beta_{\triangledown n}\mathbin{;} \alpha$ and $\alpha\mathbin{;} \beta_{\vartriangle n}\mathbin{;} \alpha = \beta_{\vartriangle n}$;
\item $R^{\triangledown n} = \alpha\mathbin{;} \beta_{\triangledown n}\mathbin{;} R^c\mathbin{;} \alpha$ and 
$R^{\vartriangle n} = \alpha\mathbin{;} \beta_{\vartriangle n}\mathbin{;} R^c\mathbin{;} \alpha$.
\end{enumerate} 
\end{prop}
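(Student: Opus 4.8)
The plan is to realise $\mathbf{A}^{\triangledown n}$ and $\mathbf{A}^{\vartriangle n}$ as equivalence DqRAs built from the \emph{same} poset $\mathbf X$, equivalence relation $E$ and order automorphism $\alpha$ that produced $\mathbf A$, changing only the dual order automorphism that enters the $'$-operation. Guided by the case $n=1$ in Remark~\ref{rem:Rstar}, I would set
$$\beta_{\triangledown n} := \beta\circ\alpha^{n} \qquad\text{and}\qquad \beta_{\vartriangle n} := \alpha^{n}\circ\beta .$$
The first step is to record the commutation identities $\beta\circ\alpha^{m} = (\alpha^\smile)^{m}\circ\beta$ and $\beta\circ(\alpha^\smile)^{m} = \alpha^{m}\circ\beta$ for every $m\in\omega$; each follows by an immediate induction from $\beta = \alpha\circ\beta\circ\alpha$, the base step being the equality $\beta\circ\alpha^\smile = \alpha\circ\beta$ already used in the proof of Theorem~\ref{thm:main_result}. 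In particular this gives the two descriptions $\beta_{\triangledown n} = \beta\circ\alpha^{n} = (\alpha^\smile)^{n}\circ\beta$ and $\beta_{\vartriangle n} = \alpha^{n}\circ\beta = \beta\circ(\alpha^\smile)^{n}$, and it is this double description that will let us match the factors appearing on either side of $R^{c}$.

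Parts (i) and (ii), as well as the claim that $\beta_{\triangledown n}$ and $\beta_{\vartriangle n}$ are self-inverse dual order automorphisms of $\mathbf X$, then drop straight out of Lemma~\ref{lem:properties_of_alpha^nbeta_and_betaalpha^n}: items (i)--(iii) there give that $\beta\circ\alpha^{n}$ and $\alpha^{n}\circ\beta$ are self-inverse dual order automorphisms contained in $E$, and item (iv) gives $\alpha\circ(\beta\circ\alpha^{n})\circ\alpha = \beta\circ\alpha^{n}$ and $\alpha\circ(\alpha^{n}\circ\beta)\circ\alpha = \alpha^{n}\circ\beta$, which are exactly the two equations asserted in (ii).

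The only genuine computation is (iii), and it is short. For $n\ge 1$ we have $R'\in\mathsf{Up}(\mathbf E)$ (Theorem~\ref{thm:main_result}) and $R^{\triangledown n} = {\sim}^{2n}(R')$ by definition, so Lemma~\ref{lem:general_twiddle_and _minus}(ii) applies to the up-set $R'$ and yields
$$R^{\triangledown n} = (\alpha^\smile)^{n}\circ R'\circ\alpha^{n} = (\alpha^\smile)^{n}\circ\alpha\circ\beta\circ R^{c}\circ\beta\circ\alpha^{n} = (\alpha^\smile)^{n-1}\circ\beta\circ R^{c}\circ\beta\circ\alpha^{n}.$$
Since $(\alpha^\smile)^{n-1}\circ\beta = \alpha\circ(\alpha^\smile)^{n}\circ\beta = \alpha\circ\beta_{\triangledown n}$ and the tail $\beta\circ\alpha^{n}$ is exactly $\beta_{\triangledown n}$, this simplifies to $R^{\triangledown n} = \alpha\circ\beta_{\triangledown n}\circ R^{c}\circ\beta_{\triangledown n}$, which is precisely the $'$-operation that Theorem~\ref{thm:main_result} attaches to the data $(\mathbf X, E, \alpha, \beta_{\triangledown n})$, so $\mathbf A^{\triangledown n}\in\mathsf{EDqRA}$. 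The argument for $R^{\vartriangle n} = {-}^{2n}(R')$ is the mirror image: Lemma~\ref{lem:general_twiddle_and _minus}(iv) gives $R^{\vartriangle n} = \alpha^{n}\circ R'\circ(\alpha^\smile)^{n} = \alpha^{n+1}\circ\beta\circ R^{c}\circ\beta\circ(\alpha^\smile)^{n}$, and $\alpha^{n+1}\circ\beta = \alpha\circ\beta_{\vartriangle n}$ together with $\beta\circ(\alpha^\smile)^{n} = \alpha^{n}\circ\beta = \beta_{\vartriangle n}$ turn this into $\alpha\circ\beta_{\vartriangle n}\circ R^{c}\circ\beta_{\vartriangle n}$. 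The case $n=0$ is immediate, with $\beta_{\triangledown 0} = \beta_{\vartriangle 0} = \beta$ and $R^{\triangledown 0} = R^{\vartriangle 0} = R'$. I do not expect a real obstacle here; the only things needing care are the bookkeeping of composition order and of the exponents of $\alpha$, and the use of the two equivalent forms of $\beta_{\triangledown n}$ (resp. $\beta_{\vartriangle n}$) to absorb the extra factor $\alpha$ that Lemma~\ref{lem:general_twiddle_and _minus} leaves behind.
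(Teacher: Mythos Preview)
Your proposal is correct and is essentially the same argument as the paper's: both set $\beta_{\vartriangle n}=\alpha^{n}\circ\beta$ (and $\beta_{\triangledown n}=\beta\circ\alpha^{n}$), invoke Lemma~\ref{lem:properties_of_alpha^nbeta_and_betaalpha^n} for items (i) and (ii), and obtain (iii) by expanding ${\sim}^{2n}R'$ (resp.\ ${-}^{2n}R'$) via Lemma~\ref{lem:general_twiddle_and _minus} and then regrouping the $\alpha$-factors. The only cosmetic difference is that the paper uses the self-inverse property $(\alpha^{n}\circ\beta)^{\smile}=\alpha^{n}\circ\beta$ to flip the trailing $\beta\circ(\alpha^{\smile})^{n}$, whereas you record the equivalent commutation law $\beta\circ(\alpha^{\smile})^{m}=\alpha^{m}\circ\beta$ directly; and you correctly render the conclusion of (iii) as $\alpha\circ\beta_{\triangledown n}\circ R^{c}\circ\beta_{\triangledown n}$ (resp.\ $\beta_{\vartriangle n}$), matching the proof and the $'$-template of Theorem~\ref{thm:main_result}, rather than the evident misprint ``$\circ\,\alpha$'' in the statement.
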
	

\begin{proof}
We will prove the existence of a 
$\beta_{\vartriangle n}$ satisfying the conditions (i)--(iii) and leave the proof of the existence of a 
$\beta_{\triangledown n}$ satisfying (i)--(iii) for the reader. We have 
\begin{align*}
R^{\vartriangle n} = {-}^{2n}R'
& = \alpha^n \mathbin{;} R' \mathbin{;} \left(\alpha^{\smile}\right)^n & (\text{Lemma \ref{lem:general_twiddle_and _minus}})\\
& = \alpha^n \mathbin{;} \alpha\mathbin{;} \beta \mathbin{;} R^c\mathbin{;} \beta \mathbin{;} \left(\alpha^{\smile}\right)^n\\
& = \alpha^{n+1} \mathbin{;} \beta \mathbin{;} R^c\mathbin{;} \beta^\smile \mathbin{;}  \left(\alpha^{n}\right)^\smile & \left(\beta^\smile = \beta\right)\\
& =\alpha^{n+1} \mathbin{;} \beta \mathbin{;} R^c\mathbin{;} \left(\alpha^n\mathbin{;} \beta\right)^\smile & \left(\text{Proposition \ref{prop:properties_binary_relations}}\right)\\
& = \alpha\mathbin{;} \alpha^{n} \mathbin{;} \beta \mathbin{;} R^c\mathbin{;} \alpha^n\mathbin{;} \beta  & 
\end{align*}
Hence, if we set 
$\beta_{\vartriangle n} = \alpha^n\mathbin{;} \beta$, the result follows from Lemma \ref{lem:properties_of_alpha^nbeta_and_betaalpha^n}. 
\end{proof}

The above proposition shows that for all $n \in \omega$, the concrete algebras
$\left\langle \mathsf{Up}\left(\mathbf{E}\right),\cap, \cup, \mathbin{;}, 1,0, {\sim}, {-}, ^{\triangledown n}\right\rangle$  and 
$\left\langle \mathsf{Up}\left(\mathbf{E}\right),\cap, \cup, \mathbin{;},1 , 0, {\sim}, {-}, ^{\vartriangle n}\right\rangle$ are in the class $\mathsf{EDqRA}$. In particular, this shows that $\left\langle \mathsf{Up}\left(\mathbf{E}\right),\cap, \cup, \mathbin{;}, 1, 0, {\sim}, {-}, ^{\triangledown 1}\right\rangle$ is in \textsf{EDqRA}. That is, all algebras constructed with $R'=\beta \mathbin{;} R^c \mathbin{;} \beta \mathbin{;} \alpha$ are in \textsf{EDqRA}. This demonstrates that the definition does not depend on whether $\alpha$ is used on the left or the right in the definition of $R'$.

\section{Representable distributive quasi relation algebras}\label{sec:RDqRA}

In this section, we will first show that $\mathbb{ISP}\left(\mathsf{FDqRA}\right) =  \mathbb{IS}\left(\mathsf{EDqRA}\right)$. Our strategy mimics the analogous results for relation algebras (cf.~\cite[Chapter 3]{Mad06}). 

Consider an index set $I$. For each index $i \in I$, let $\mathbf X_i  = \left(X_i, \le_i\right)$ be a poset and $E_i$ an equivalence relation such that ${\le}_i \subseteq E_i$. We further require that $X_i \cap X_j = \varnothing$ whenever $i \neq j$. Set 
$$
X := \displaystyle \bigcup_{i\in I}X_i, \quad \displaystyle{\le} :=\bigcup_{i\in I}{\le}_i \quad \textnormal{and} \quad E := \displaystyle \bigcup_{i \in I}E_i.
$$
Given maps $\gamma_i: X_i\rightarrow X_i$, $i \in I$, define $\gamma: X \to X$ by setting, for each $x \in X$,
$\gamma(x) = \gamma_i(x) \textnormal{ if } x \in X_i$.
It is not difficult to see that $\gamma = \displaystyle\bigcup_{i\in I}\gamma_i$. 
These definitions will be used throughout the theorems and their proofs in this section.

\begin{thm}\label{thm:4.1} 
Let  $I$ be an index set. For each $i \in I$, let $\mathbf X_i  = \left(X_i, \le_i\right)$ be a poset and $E_i$ an equivalence relation such that ${\le}_i\; \subseteq E_i$. Assume  $X_i \cap X_j = \varnothing$ for all $i, j \in I$ such that $i \neq j$. Suppose further that for each $i \in I$ the map \break $\alpha_i: X_i \rightarrow X_i$ is an order automorphism of $\mathbf X_i$ and $\beta_i: X_i \rightarrow X_i$ is a self-inverse dual order automorphism of  $\mathbf X_i$ such that $\alpha_i, \beta_i \subseteq E_i$ and $\beta_i = \alpha_i \mathbin{;} \beta_i \mathbin{;} \alpha_i$. Then:
\begin{enumerate}[\normalfont (i)]
\item $\left(X, \le\right)$ is a poset and $E$ an equivalence relation on $X$ such that ${\le} \subseteq E$;
\item $\alpha$ is an order automorphism of $\mathbf X$ such that $\alpha \subseteq E$;
\item $\beta$ is a self-inverse dual order automorphism of $\mathbf X$ such that $\beta \subseteq E$;
\item $\beta = \alpha \mathbin{;} \beta \mathbin{;}\alpha$;
\item $\mathbf{Dq}\left(\mathbf E\right) \cong \displaystyle \prod_{i \in I}\mathbf{Dq}\left(\mathbf E_i\right)$. 
\end{enumerate}
\end{thm}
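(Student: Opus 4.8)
The plan is to verify items (i)--(iv) by unwinding the definitions componentwise, and then to prove the isomorphism in (v) by exhibiting an explicit map and checking it preserves all the operations. Items (i)--(iv) are essentially bookkeeping: since the $X_i$ are pairwise disjoint, any pair in $\le$, $E$, $\alpha$ or $\beta$ lies entirely within a single component, so transitivity, symmetry, reflexivity, the automorphism properties, the self-inverse property of $\beta$, and the identity $\beta = \alpha\circ\beta\circ\alpha$ all reduce to the corresponding componentwise facts. I would state this reduction once (``because the $X_i$ are disjoint, composition and converse of relations contained in $E$ never mix components'') and then dispatch each clause briefly, perhaps using Lemma~\ref{lem:comp_and_conv_in_E} to see $\alpha,\beta\subseteq E$ and Proposition~\ref{prop:properties_binary_relations}(vii) for the converse computations.

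For (v), the natural candidate isomorphism $\varphi\colon \mathsf{Up}(\mathbf E)\to\prod_{i\in I}\mathsf{Up}(\mathbf E_i)$ is restriction: $\varphi(R) = (R\cap E_i)_{i\in I}$, with inverse $(R_i)_{i\in I}\mapsto\bigcup_{i\in I}R_i$. First I would check that this is a well-defined bijection between up-sets: since $E = \bigdu_{i\in I} E_i$ as a disjoint union of posets, an up-set of $\mathbf E$ is exactly a disjoint union of up-sets of the $\mathbf E_i$, because no element of $E_i$ is $\preceq$-comparable to an element of $E_j$ for $i\ne j$. Then I would verify that $\varphi$ preserves each operation. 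Preservation of $\cap$ and $\cup$ is immediate. For $\circ$ one uses that $R\circ S\subseteq E$ (Lemma~\ref{lem:important_up-_and_down-sets}(i)) together with the disjointness: a triple $(x,z,y)$ witnessing $(x,y)\in R\circ S$ with $R,S\subseteq E$ forces $x,z,y$ into the same component, so $(R\circ S)\cap E_i = (R\cap E_i)\circ(S\cap E_i)$; the same observation handles $1 = {\le} = \bigdu_{i}{\le_i}$ and $0 = \alpha\circ(\le^c)^\smile$. For the unary operations, I would use the alternative formulas from Lemma~\ref{lem:twiddle_minus_alternative}, namely ${\sim}R = (R^c)^\smile\circ\alpha$ and $-R = \alpha\circ(R^c)^\smile$, together with the fact that $R^\smile\cap E_i = (R\cap E_i)^\smile$ (from disjointness and symmetry of each $E_i$) and that $(R\cap E_i)^c$ relative to $E_i$ is $R^c\cap E_i$; similarly $R' = \alpha\circ\beta\circ R^c\circ\beta$ is handled by the same two ingredients, noting Lemma~\ref{lem:(gammaR)capE=gamma(RcapE)} lets one push $\cap E_i$ through compositions with $\alpha$ and $\beta$ since $\alpha_i,\beta_i\subseteq E_i$.

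The main technical point --- and the only place any real care is needed --- is the interaction of complementation with the decomposition: the complement $R^c$ in the construction is taken inside $X^2$ (or inside $E$), and one must be careful that ``complement of $R$ inside $E$, then intersect with $E_i$'' equals ``intersect $R$ with $E_i$, then complement inside $E_i$''. This holds precisely because $E = \bigdu_i E_i$, so $E\setminus R$ restricted to $E_i$ is $E_i\setminus(R\cap E_i)$. Once this is pinned down, every operation in the signature is either built from $\cap,\cup,\circ,{}^\smile,{}^c$ and the fixed relations $\le,\alpha,\beta$, all of which respect the disjoint decomposition, so $\varphi$ is a homomorphism, and being a bijection it is an isomorphism. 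I would therefore organise the proof of (v) as: (a) $\mathsf{Up}(\mathbf E)\cong\prod_i\mathsf{Up}(\mathbf E_i)$ as lattices via restriction/union; (b) the complement-restriction compatibility lemma; (c) one-line verifications that $\circ$, $1$, $0$, ${\sim}$, $-$ and $'$ are preserved, citing Lemmas~\ref{lem:twiddle_minus_alternative} and~\ref{lem:(gammaR)capE=gamma(RcapE)}.
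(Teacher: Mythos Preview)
Your proposal is correct and follows essentially the same approach as the paper: both define the isomorphism $\varphi(R)(i) = R\cap E_i$ and verify the operations componentwise, with the paper likewise invoking Lemma~\ref{lem:(gammaR)capE=gamma(RcapE)} to push $\cap\, E_i$ past compositions with $\alpha_i,\beta_i$ when checking $'$. Your outline is in fact slightly more complete than the paper's written proof, which omits the explicit verification of bijectivity and of preservation of $\circ$, ${\sim}$, and $-$ (showing only $\cap,\cup,1,0,'$), whereas you flag these and the complement--restriction compatibility issue directly.
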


\begin{proof}
Item (i) is straightforward. We will prove item (iii) and omit the proof of (ii) since it is similar. Firstly, $\beta \subseteq E$ since $\beta_i \subseteq E_i$ for all $i \in I$. To see that $\beta$ is a dual order automorphism, let $x, y \in X$ and assume $x \le y$. Then $x, y \in X_j$ and $x \le_j y$ for some $j \in I$. Since $\beta_j$ is a dual order automorphism, $x \le_j y$ implies $\beta_j(y) \le_j \beta_j(x)$. But $x, y \in X_j$, so $\beta(x) = \beta_j(x) \in X_j$ and $\beta(y) = \beta_j(y) \in X_j$, and therefore $\beta(y) \le \beta(x)$. 

For the  converse implication, suppose $x \not\le y$. Then there is some $j \in I$ such that $x, y \in X_j$ but $x \not\le_j y$. Applying the fact that $\beta_j$ is a dual order automorphism to the latter gives $\beta_j(y) \not\le \beta_j(x)$. Hence, since $x, y \in X_j$ implies that $\beta(x) = \beta_j(x) \in X_j$ and $\beta(y) = \beta_j(y) \in X_j$, we get $\beta(y) \not\le \beta(x)$. 

It remains to show that $\beta$ is self-inverse. To this end, let $x \in X$. Then $x \in X_j$ for some $j \in I$. Hence, $\beta(x) = \beta_j(x)$, and so, since $\beta_j(x) \in X_j$, we get $\beta(\beta(x)) = \beta(\beta_j(x)) = \beta_j(\beta_j(x))$. But $\beta_j$ is self-inverse, so $\beta(\beta(x)) = \beta_j(\beta_j(x)) = x$.

For item (iv), let $x \in X$. Then $x \in X_j$ for some $j \in I$. We thus get $\alpha(x) = \alpha_j(x) \in X_j$. Hence, $\beta\left(\alpha(x)\right) = \beta_j\left(\alpha_j\left(x\right)\right)$, and therefore, since $\beta_j\left(\alpha_j\left(x\right)\right) \in X_j$, we have $\alpha\left(\beta\left(\alpha\left(x\right)\right)\right) = \alpha_j\left(\beta_j\left(\alpha_j\left(x\right)\right)\right)$. But $\alpha_j\mathbin{;}\beta_j\mathbin{;} \alpha_j = \beta_j$, so $\alpha\left(\beta\left(\alpha\left(x\right)\right)\right) = \beta_j(x) = \beta(x)$. 

To prove (v), define a map $\varphi$: $\mathsf{Up}\left(\mathbf E\right) \rightarrow 
\prod_{j \in I}\mathsf{Up}\left(\mathbf E_j\right)$ coordinatewise by setting, for each $R \in \mathsf{Up}\left(\mathbf E\right)$ and each $i \in I$,
$\varphi(R)(i) := R \cap E_i$.
We now show that $\varphi$ is an isomorphism from $\mathbf{Dq}\left(\mathbf E\right)$ into $\prod_{j \in I}\mathbf{Dq}\left(\mathbf E_j\right)$.
Showing that $\varphi(R \cap S)(i)= \left(\varphi(R) \cap \varphi(S) \right) (i)$ and 
$\varphi(R \cup S)(i)= \left(\varphi(R) \cup \varphi(S) \right) (i)$ follows from basic set-theoretic calculations. 

Now we show that $\varphi$ preserves the monoid identity. 
In the second last step below we use the fact that 
${\le}_j \cap E_i = \varnothing$ for $j\neq i$, and in the last step we use ${\le}_j \subseteq E_i$: 
$$
\varphi(\le)(i)  = {\le} \cap E_i 
= \bigcup_{j\in I}{\le}_j \cap E_i
= \bigcup_{j\in I}\left({\le}_j \cap E_i\right)
= {\le}_i \cap E_i  
= {\le}_i.$$

Below we will use the $-$ symbol to denote set complement (not
to be confused with the unary operation on a qRA). By Lemma~\ref{lem:important_eq_injective_map} we get:
$$ \varphi(0)(i) = \varphi\left(\alpha \mathbin{;} \left(\le^c\right)^\smile\right)(i) 
 = \left(\alpha \mathbin{;} \left(\le^\smile\right)^c\right) \cap E_i
 = \left(\alpha\mathbin{;} {\le}^\smile\right)^c \cap E_i.  $$
Using the above we calculate 
\begin{align*}
\varphi(0)(i) 
& = \left[ E - (\alpha \mathbin{;} {\leqslant}^\smile)\right] \cap E_i \\ 
& = E_i - \left[ (\alpha \mathbin{;} {\leqslant}^\smile) \cap E_i \right] & \left(E_i\subseteq E\right) \\
& = E_i - \left[ (\alpha \mathbin{;} {\leqslant}^\smile) \cap E_i^\smile \right] & \left(E_i = E_i^\smile\right) \\
& = E_i - \left(\left({\le} \mathbin{;} \alpha^\smile\right)^\smile \cap E_i^\smile\right) & \left(\textnormal{\textnormal{Proposition \ref{prop:properties_binary_relations}}}\right)\\
& =E_i - \left[\left(\left({\le} \mathbin{;}  \alpha^\smile\right) \cap E_i\right)^\smile\right] & \left(\textnormal{\textnormal{Proposition \ref{prop:properties_binary_relations}}}\right)
\end{align*}
Now by the definition of $\alpha$ we get: 
\begin{align*}
& \varphi(0)(i)\\& = E_i - \left[\left(\left(\bigcup_{j\in I}{\le}_j \mathbin{;}\left(\bigcup_{k \in I}\alpha_k\right)^\smile\right) \cap E_i\right)^\smile\right]\\
& = E_i- \left[\left(\left(\bigcup_{j \in I}{\le}_j\mathbin{;} \bigcup_{k\in I}\alpha_k^\smile\right) \cap E_i\right)^\smile\right] & \left(\textnormal{Proposition \ref{prop:properties_binary_relations}}\right)\\
& = E_i - \left[\left(\bigcup_{j \in I}\bigcup_{k\in I}\left({\le}_j\mathbin{;} \alpha_k^\smile\right) \cap E_i\right)^\smile\right] & \left(\textnormal{Proposition \ref{prop:properties_binary_relations}}\right)\\
& = E_i -\left[\left(\bigcup_{j \in I}\left({\le}_j\mathbin{;} \alpha^\smile_j\right) \cap E_i\right)^\smile\right] & \left({\le}_j \mathbin{;} \alpha_k^\smile = \varnothing \textnormal{ for } j\neq k\right)\\
& = E_i - \left[\left(\bigcup_{j \in I}\left(\left({\le}_j\mathbin{;} \alpha_j^\smile\right) \cap E_i\right)\right)^\smile\right]
\end{align*}
Using  
the fact that 
$\left({\le}_j \mathbin{;} \alpha_j^\smile\right) \cap E_i = \varnothing \textnormal{ for } j\neq i$, the set above is equal to: 
\begin{align*}
\left[\left(\left({\le}_i\mathbin{;} \alpha_i^\smile\right) \cap E_i\right)^\smile\right]^c  
& = \left[\left({\le}_i\mathbin{;} \alpha_i^\smile\right)^\smile\right]^c & \left({\le}_i \mathbin{;} \alpha_i^\smile \subseteq E_i\right)\\
& = \left(\alpha_i\mathbin{;}{\le}_i^\smile\right)^c & \left(\textnormal{Proposition \ref{prop:properties_binary_relations}}\right)\\
& = \alpha_i\mathbin{;} \left({\le}_i^\smile\right)^c &
\left(\textnormal{Lemma \ref{lem:important_eq_injective_map}}\right)\\
& = 0_i.
\end{align*}

We will now demonstrate that $\varphi$ preserves the $'$ operation. 
\begin{align*}
\varphi(R')(i)	
 &= \varphi\left(\alpha \mathbin{;} \beta \mathbin{;} R^c \mathbin{;} \beta\right)(i)\\ &= \left(\alpha \mathbin{;} \beta \mathbin{;} R^c \mathbin{;} \beta\right) \cap E_i \\
& = \left(\alpha \mathbin{;} \beta \mathbin{;} R \mathbin{;} \beta\right)^c \cap E_i &
\left(\textnormal{Lemma \ref{lem:important_eq_injective_map}}\right)\\
& = E_i - \left(\left(\alpha \mathbin{;} \beta \mathbin{;} R \mathbin{;} \beta\right) \cap E_i\right).
\end{align*}
Applying the definition of $\alpha$ and $\beta$ gives us: 
$$\varphi(R')(i) =  E_i - \left(\left(\bigcup_{j\in I}\alpha_j \mathbin{;} \bigcup_{k\in I}\beta_k \mathbin{;} R \mathbin{;} \bigcup_{\ell\in I}\beta_\ell\right) \cap E_i\right).$$
We can then use part (viii) and (ix) from Proposition~\ref{prop:properties_binary_relations}, as well as the fact that $\alpha_j \mathbin{;} \beta_k = \varnothing$ for $j \neq k$, and finally $\alpha_j \mathbin{;} \beta_j \mathbin{;} R \mathbin{;} \beta_{\ell} = \varnothing$ when $j \neq \ell$ to get:  

\begin{align*}
\varphi(R')(i) &= 
E_i - \left(\bigcup_{j\in I}\left(\alpha_j\mathbin{;} \beta_j \mathbin{;} R\mathbin{;} \beta_j\right) \cap E_i\right) \\
 &=  E_i - \left(\bigcup_{j\in I}\left(\left(\alpha_j\mathbin{;} \beta_j \mathbin{;} R\mathbin{;} \beta_j\right) \cap E_i\right)\right).
\end{align*}
Since $\left(\alpha_j\mathbin{;} \beta_j \mathbin{;} R\mathbin{;}\beta_j\right) \cap E_i = \varnothing \textnormal{ for } j \neq i$, the set above is equal to  
\begin{align*} 
\left(\left(\alpha_i\mathbin{;} \beta_i \mathbin{;} R\mathbin{;} \beta_i\right) \cap E_i\right)^c & =  \left(\alpha_i\mathbin{;} \beta_i \mathbin{;} \left(R \cap E_i\right) \mathbin{;} \beta_i\right)^c  & \left(\textnormal{Lemma } \ref{lem:(gammaR)capE=gamma(RcapE)}\right) \\
& = \left(\alpha_i\mathbin{;} \beta_i \mathbin{;} \varphi(R)(i) \mathbin{;} \beta_i\right)^c \\
& = \alpha_i\mathbin{;} \beta_i \mathbin{;} \left(\varphi(R)(i)\right)^c \mathbin{;} \beta_i &
\left(\textnormal{Lemma \ref{lem:important_eq_injective_map}}\right)\\
& = \left(\varphi(R)(i)\right)'. && \qedhere
\end{align*}
\end{proof}

Before we proceed to our next theorem, we need a technical lemma. 

\begin{lem}\label{lem:gamma_and_E}
Let $X$ be a set and $E$ an equivalence relation on $X$. If $\gamma: X \rightarrow X$ is a function and $\gamma \subseteq E$, then 
$\gamma(y) \in \left[x\right]$ whenever $y \in \left[x\right]$. 
\end{lem}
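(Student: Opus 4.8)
The plan is to unwind the definitions and use the two defining properties of $E$ (it contains the graph $\gamma$, and it is transitive). Recall from the earlier convention that $\gamma$ is identified with its graph $G_\gamma = \{(z,\gamma(z)) \mid z \in X\}$, and that $[x]$ denotes the $E$-equivalence class of $x$, i.e. $[x] = \{w \in X \mid (x,w) \in E\}$.

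First I would fix $x$ and suppose $y \in [x]$, which by definition means $(x,y) \in E$. Next, since $y \in X$, the pair $(y,\gamma(y))$ lies in the graph of $\gamma$, and the hypothesis $\gamma \subseteq E$ then gives $(y,\gamma(y)) \in E$. Finally, applying transitivity of $E$ to $(x,y) \in E$ and $(y,\gamma(y)) \in E$ yields $(x,\gamma(y)) \in E$, that is, $\gamma(y) \in [x]$, as required.

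There is no real obstacle here: the statement is an immediate three-line consequence of transitivity together with the containment $\gamma \subseteq E$, and symmetry/reflexivity of $E$ are not even needed. The only thing to be careful about is making the identification of $\gamma$ with its graph explicit, so that the step "$(y,\gamma(y)) \in \gamma$" is justified; everything else is a direct appeal to the properties already recorded for equivalence relations.
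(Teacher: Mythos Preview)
Your proof is correct and follows essentially the same approach as the paper's: assume $y\in[x]$ so $(x,y)\in E$, use $\gamma\subseteq E$ to get $(y,\gamma(y))\in E$, and apply transitivity to conclude $(x,\gamma(y))\in E$. Your additional remark that only transitivity (not symmetry or reflexivity) is needed is accurate and a nice observation.
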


\begin{proof}
Let $y \in \left[x\right]$. Then we have $(x, y) \in E$. But we know that $\gamma \subseteq E$, so $(y,\gamma(y)) \in E$. Hence, by the transitivity of $E$, $(x, \gamma(y)) \in E$, and therefore $\gamma(y) \in \left[x\right]$, as required. 
\end{proof}

\begin{thm}\label{thm:EDqRA_to_FDqRa}
Let $\mathbf{X}=\left(X,\le\right)$ be a poset and $E$ an equivalence relation on $X$ such that ${\le} \subseteq E$.  Let $\alpha: X \to X$ be an order automorphism of $\mathbf X$ and $\beta: X \to X$ a self-inverse dual order automorphism of $\mathbf X$ such that $\alpha, \beta \subseteq E$ and $\beta = \alpha \mathbin{;} \beta\mathbin{;} \alpha$. For each $x \in X$, define $\alpha_x: \left[x\right] \to \left[x\right]$ by $\alpha_x(y) = \alpha(y)$ and $\beta_x: \left[x\right] \to \left[x\right]$ by $\beta_x(y) = \beta(y)$ for all $y \in [x]$. 

\begin{enumerate}[\normalfont (i)]
\item For each $x \in X$, $\left(\left[x\right], {\le}\cap [x]^2\right)$ is a poset. 
\item For each $x \in X$, the map $\alpha_x$ is an order automorphism of $\left(\left[x\right], {\le}\cap [x]^2\right)$ such that $\alpha_x\subseteq \left[ x \right]^2 $. 
\item For each $x \in X$, the map $\beta_x$ is a self-inverse dual order automorphism of $\left(\left[x\right], {\le} \cap [x]^2
\right)$ such that $\beta_x\subseteq \left[x\right]^2$.
\item For each $x \in X$, we have $\alpha_x\mathbin{;} \beta_x\mathbin{;} \alpha_x = \beta_x$.
\item $\mathbf{Dq}\left(\mathbf E\right) \cong \displaystyle \prod_{x \in X}\mathbf{Dq}\left( \left(\left[x\right]^2, {\preceq}\cap \left([x]^2\right)^2\right)\right)$. 
\end{enumerate}
\end{thm}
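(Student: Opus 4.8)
The plan is to establish (i)--(iv) directly, since these are exactly the hypotheses needed to invoke Theorem~\ref{thm:4.1}, and then to obtain (v) as an instance of that theorem with the index set taken to be a transversal of the $E$-classes of $X$. The one genuinely load-bearing ingredient is Lemma~\ref{lem:gamma_and_E}: because $\alpha \subseteq E$ and $\beta \subseteq E$, both maps carry each class $[x]$ into itself, so the restrictions $\alpha_x$ and $\beta_x$ are honest self-maps of $[x]$; everything else is bookkeeping.

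For (i), $\le \cap [x]^2$ is reflexive on $[x]$, and it inherits antisymmetry and transitivity from $\le$, so $([x], \le \cap [x]^2)$ is a poset. For (ii), Lemma~\ref{lem:gamma_and_E} gives $\alpha([x]) \subseteq [x]$, and since $E$ is symmetric we also have $\alpha^\smile \subseteq E$, whence $\alpha^{-1}([x]) \subseteq [x]$; combined with the injectivity of $\alpha$ this makes $\alpha_x$ a bijection of $[x]$. If $u, v \in [x]$ then $u \le v$ iff $\alpha(u) \le \alpha(v)$, since $\alpha$ is an order automorphism of $\mathbf X$ and all four points lie in $[x]$, so $\alpha_x$ and $\alpha_x^{-1}$ preserve $\le \cap [x]^2$; finally $\alpha_x \subseteq [x]^2$ is immediate. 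Item (iii) is the same argument for $\beta$: $\beta = \beta^\smile$ gives $\beta^{-1}=\beta$, so $\beta_x$ is a bijection of $[x]$ with $\beta_x \circ \beta_x = \text{id}_{[x]}$, and $\beta$ being a dual order automorphism makes $\beta_x$ one. For (iv), evaluate $\alpha_x \circ \beta_x \circ \alpha_x$ at $y \in [x]$: all intermediate values stay in $[x]$ by repeated use of Lemma~\ref{lem:gamma_and_E}, so it equals $\alpha(\beta(\alpha(y))) = \beta(y) = \beta_x(y)$ by the hypothesis $\alpha \circ \beta \circ \alpha = \beta$.

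For (v), fix a transversal $Y \subseteq X$ meeting each $E$-class exactly once and index the family $\{([y], \le \cap [y]^2),\, [y]^2,\, \alpha_y,\, \beta_y\}$ by $Y$; this is the natural reading of the product in the statement. The classes $\{[y] : y \in Y\}$ partition $X$, so $\bigcup_{y \in Y}[y] = X$ and $\bigcup_{y \in Y}[y]^2 = E$, and since $\le \subseteq E$ forces every comparable pair into a single class, $\bigcup_{y \in Y}(\le \cap [y]^2) = \le$; likewise $\bigcup_{y \in Y}\alpha_y = \alpha$ and $\bigcup_{y \in Y}\beta_y = \beta$, while $1 = \le$ and $0 = \alpha \circ (\le^c)^\smile$ are precisely the constants attached to this amalgam. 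Thus the data of the present theorem is exactly the amalgam produced by Theorem~\ref{thm:4.1} from this family --- whose hypotheses are (i)--(iv) together with the disjointness of distinct classes --- so Theorem~\ref{thm:4.1}(v) gives $\mathbf{Dq}(\mathbf E) \cong \prod_{y \in Y}\mathbf{Dq}\bigl(([y]^2, \preceq \cap ([y]^2)^2)\bigr)$, which is the displayed isomorphism (each factor depends only on the $E$-class of its index).

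There is no deep obstacle: once the restricted maps are seen to be well defined, (v) is a reduction to Theorem~\ref{thm:4.1}. The only point requiring care is surjectivity of $\alpha_x$ and $\beta_x$ \emph{onto} $[x]$ rather than merely into it, for which one needs $\alpha^\smile \subseteq E$ (using the symmetry of $E$) and $\beta^\smile = \beta$; overlooking this would leave $\alpha_x$, $\beta_x$ possibly non-surjective and break the appeal to Theorem~\ref{thm:4.1}.
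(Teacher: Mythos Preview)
Your proposal is correct and follows essentially the same route as the paper: verify (i)--(iv) using Lemma~\ref{lem:gamma_and_E} to keep $\alpha$ and $\beta$ inside each $E$-class, then read off (v) from Theorem~\ref{thm:4.1}. You are in fact more careful than the paper on two points: you explicitly use a transversal of the $E$-classes to index the product (the paper's $\prod_{x\in X}$ is to be read this way), and you flag that surjectivity of $\alpha_x$ onto $[x]$ needs $\alpha^\smile\subseteq E$, which the paper leaves to the reader by proving only (iii).
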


\begin{proof}
Item (i) is straightforward. Items (ii) and (iii) are similar, so we will prove (iii) and leave (ii) for the reader. 

Firstly, $\beta_x \subseteq \left[x\right]^2$ since $\beta: \left[x\right] \to \left[x\right]$. To see that $\beta_x$ is a dual order automorphism, let $y, z \in \left[x\right]$ and assume 
$(y,z) \in {\le} \cap [x]^2$. 
Then $y \le z$, and so since $\beta$ is a dual order automorphism of $\mathbf X$, we have $\beta(z) \le \beta(y)$. But $\beta(z), \beta(y) \in \left[x\right]$ by Lemma \ref{lem:gamma_and_E}, and hence, since  $\beta(z) = \beta_x(z)$ and $\beta(y) = \beta_x(y)$, we get $\left(\beta_x(z),\beta_x(y)\right) \in {\le}\cap [x]^2$. 

Conversely, suppose $y, z\in \left[x\right]$ such that 
$(y,z) \notin  {\le} \cap [x]^2$. 
Then we have $y \not\le z$. Applying the fact that $\beta$ is a dual order automorphism to this gives $\beta(z) \not\le \beta(y)$. Hence, as $y, z \in \left[x\right]$ implies that $\beta(y), \beta(z) \in \left[x\right]$ (by Lemma~\ref{lem:gamma_and_E}), we have 
$\beta_x(z) = \beta(z)$  
and 
$\beta(y) = \beta_x(y)$ and so $\left(\beta_x(z), \beta_x(y)\right) \notin {\le}\cap[x]^2$.  

To see that $\beta_x$ is self-inverse, let $y \in \left[x\right]$. Then $\beta(y) \in \left[x\right]$ by Lemma~\ref{lem:gamma_and_E}. Hence, $\beta_x(\beta_x(y)) = \beta_x(\beta(y)) = \beta(\beta(y)) = y$ (since $\beta$ is self-inverse).  

For item (iv), let $y \in [x]$. Then $\alpha(y) \in \left[x\right]$ and $\beta\left(\alpha(y)\right) \in \left[x\right]$ by Lemma \ref{lem:gamma_and_E}. Hence, $\left(\alpha_x\circ\beta_x\circ\alpha_x\right)\left(y\right) = \alpha_x\left(\beta_x\left(\alpha_x(y)\right)\right) = \alpha_x\left(\beta_x\left(\alpha(y)\right)\right) = \alpha_x\left(\beta\left(\alpha_x(y)\right)\right) = \alpha\left(\beta\left(\alpha(y)\right)\right) = \beta(y) = \beta_x(y)$. 

Item (v) follows from Theorem~\ref{thm:4.1} with $E=\bigcup \{\, [x] \mid x\in X\,\}$. 
\end{proof}

We are now able to prove the main theorem of this section. 
\begin{thm}
$\mathbb{IP}\left(\mathsf{FDqRA}\right) = \mathbb{I}(\mathsf{EDqRA})$
\end{thm}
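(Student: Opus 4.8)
The plan is to read this equality off the two structure theorems just proved, Theorem~\ref{thm:4.1} and Theorem~\ref{thm:EDqRA_to_FDqRa}; there is no genuinely new content, only bookkeeping, and the argument splits into the two obvious inclusions. For $\mathsf{EDqRA}\subseteq\mathbb{P}\left(\mathsf{FDqRA}\right)$ I would take an arbitrary $\mathbf{A}=\mathbf{Dq}\left(\mathbf E\right)\in\mathsf{EDqRA}$, with defining data a poset $\mathbf X=\left(X,\le\right)$, an equivalence relation $E\supseteq{\le}$, an order automorphism $\alpha\subseteq E$, and a self-inverse dual order automorphism $\beta\subseteq E$ with $\beta=\alpha\circ\beta\circ\alpha$. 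Theorem~\ref{thm:EDqRA_to_FDqRa} already gives $\mathbf A\cong\prod_{x\in X}\mathbf{Dq}\left(\left(\left[x\right]^2,\,{\preceq}\cap\left(\left[x\right]^2\right)^2\right)\right)$, so all that remains is to check that each factor lies in $\mathsf{FDqRA}$, not merely in $\mathsf{EDqRA}$. For this I would observe that $\left(\left[x\right]^2,\,{\preceq}\cap\left(\left[x\right]^2\right)^2\right)$ is, by inspection of the definition of $\preceq$, exactly the twisted product built from the subposet $\left(\left[x\right],\,{\le}\cap\left[x\right]^2\right)$ (both clauses defining $\preceq$ only involve elements of $\left[x\right]$), that parts (ii)--(iv) of Theorem~\ref{thm:EDqRA_to_FDqRa} make $\alpha_x$ and $\beta_x$ an order automorphism and a self-inverse dual order automorphism of that subposet with $\alpha_x\circ\beta_x\circ\alpha_x=\beta_x$, and that $\alpha_x,\beta_x\subseteq\left[x\right]^2$; Corollary~\ref{cor:FDqRA}, applied with underlying set $\left[x\right]$, then says precisely that the factor is a full DqRA, whence $\mathbf A\in\mathbb{P}\left(\mathsf{FDqRA}\right)$.

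For the reverse inclusion $\mathbb{P}\left(\mathsf{FDqRA}\right)\subseteq\mathsf{EDqRA}$ I would take $\mathbf A=\prod_{i\in I}\mathbf A_i$ with each $\mathbf A_i=\mathbf{Dq}\left(\left(X_i^2,\preceq\right)\right)$ built from a poset $\left(X_i,\le_i\right)$ and maps $\alpha_i,\beta_i$ as in Corollary~\ref{cor:FDqRA}. Replacing each $X_i$ by an isomorphic copy (for instance $X_i\times\{i\}$) alters $\mathbf A_i$ only up to isomorphism, so I may assume the $X_i$ pairwise disjoint. Then $E_i:=X_i^2$ is an equivalence relation with ${\le_i}\subseteq E_i$, and I would form $X,{\le},E,\alpha,\beta$ as the unions described in the set-up preceding Theorem~\ref{thm:4.1}; disjointness of the $X_i$ is exactly what makes $E=\bigcup_{i\in I}X_i^2$ an equivalence relation on $X$ (for transitivity, $y\in X_i\cap X_j$ forces $i=j$). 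Theorem~\ref{thm:4.1} then yields in one stroke that $\left(X,\le\right)$ is a poset, $E$ an equivalence relation containing ${\le}$, $\alpha$ an order automorphism and $\beta$ a self-inverse dual order automorphism of $\mathbf X$ with $\alpha,\beta\subseteq E$ and $\beta=\alpha\circ\beta\circ\alpha$, and that $\mathbf{Dq}\left(\mathbf E\right)\cong\prod_{i\in I}\mathbf{Dq}\left(\mathbf E_i\right)=\prod_{i\in I}\mathbf A_i=\mathbf A$. Hence $\mathbf A$ is isomorphic to the equivalence DqRA $\mathbf{Dq}\left(\mathbf E\right)$, which completes the argument.

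I do not expect a real obstacle here: the whole substance sits in Theorems~\ref{thm:4.1} and~\ref{thm:EDqRA_to_FDqRa}, and the only points needing attention are the two sanity checks flagged above --- that the congruence-class factors genuinely have the \emph{full} form of Corollary~\ref{cor:FDqRA}, and that the relabelling in the second inclusion leaves $\bigcup_{i\in I}X_i^2$ an equivalence relation. A minor cosmetic point is that, since $\mathsf{FDqRA}$ and $\mathsf{EDqRA}$ are defined by concrete constructions rather than being closed under isomorphism, the equality in the statement (exactly as with $\mathbb{ISP}\left(\mathsf{FDqRA}\right)=\mathbb{IS}\left(\mathsf{EDqRA}\right)$) is to be understood up to isomorphism, which is how both inclusions above are phrased.
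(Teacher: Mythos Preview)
Your proposal is correct and follows essentially the same approach as the paper: both inclusions are obtained directly from Theorems~\ref{thm:4.1} and~\ref{thm:EDqRA_to_FDqRa}, with the disjointness arranged by relabelling via $X_i\times\{i\}$ in the second inclusion. You are somewhat more careful than the paper in spelling out why the equivalence-class factors in Theorem~\ref{thm:EDqRA_to_FDqRa} are genuinely \emph{full} DqRAs, and your closing remark about the equality holding up to isomorphism is apt; the paper, on the other hand, explicitly handles the corner case $X=\varnothing$ and the deletion of empty factors, which you leave implicit.
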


\begin{proof}
Let $\mathbf{X}=\left(X,\le\right)$ be a poset and $E$ an equivalence relation on $X$ such that ${\le} \subseteq E$. Further, let $\alpha: X \to X$ be an order automorphism of $\mathbf X$ and $\beta: X \to X$ a self-inverse dual order automorphism of $\mathbf X$ such that $\alpha, \beta \subseteq E$ and $\beta = \alpha \mathbin{;} \beta\mathbin{;} \alpha$. If $X \neq \varnothing$, then we can use Theorem \ref{thm:EDqRA_to_FDqRa} to conclude that $\mathbf{Dq}\left(\mathbf E\right) \in \mathbb{P}\left(\mathsf{FDqRA}\right)$. If $X = \varnothing$, then $\mathbf{Dq}\left(\left(E, \preceq\right)\right) = \mathbf{Dq}\left(\left(X^2, \preceq\right)\right)$, so $\mathbf{Dq}\left(\left(E, \preceq\right)\right) \in \mathbb{P}\left(\mathsf{FDqRA}\right)$. 

We will now prove the reverse inclusion. Let $I$ be an index set. For each $i \in I$, let $\mathbf X_i  = \left(X_i, \le_i\right)$ be a poset. Suppose further that for each $i \in I$ the map $\alpha_i: X_i \rightarrow X_i$ is an order automorphism of $\mathbf X_i$ and $\beta_i: X_i \rightarrow X_i$ is a self-inverse dual order automorphism of  $\mathbf X_i$ such that $\beta_i = \alpha_i \mathbin{;} \beta_i \mathbin{;} \alpha_i$. Let $\mathbf A = \prod_{i \in I}\mathbf{Dq}\left(\left(X^2_i, \preceq_i\right)\right)$. We may assume that $X_i\neq \varnothing$ for all $i \in I$ since the direct product of a family of algebras from $\mathsf{FDqRA}$ is isomorphic to another such product from which
all factors of the form $\mathbf{Dq}\left(\left(\varnothing, \varnothing\right)\right)$ have been deleted. Now, for each $i \in I$, set $X'_i = \left\{\,(x, i) \mid x \in X_i\,\right\}$. Then $X'_i \cap X'_j = \varnothing$ for $i \neq j$. For all $i \in I$, define $\le'_{i}$ on $X'_i$ by $(x, i)\le'_{i}(y, i)$ iff $x \le_i y$. It is not difficult to see that $(X'_i, \le'_i)$ is a poset for all $i \in I$. Further, for all $i \in I$, let $E_i' = \left\{\,\left(\left(x, i\right), \left(y, i\right)\right)\mid \left(x, y\right) \in E_i\,\right\}$. 
Clearly then $E'_i$ is an equivalence relation on $X'_i$ and ${\le}'_i \subseteq E'_i$ for all $i \in I$. Finally, for all $i \in I$ and $(x, i) \in X'_i$, define $\alpha'_i: X'_i \to X'_i$ and $\beta_i': X'_i \to X'_i$ by $\alpha'_i(x,i) = (\alpha_i(x), i)$ and $\beta'_i(x, i) = (\beta_i(x), i)$. It is straightforward to prove that $\alpha'_i$ is an automorphism and that $\beta'_i$ is a dual automorphism. Moreover, $\alpha'_i, \beta'_i \subseteq E'_i$ and 
$\alpha'_i\mathbin{;} \beta'_i \mathbin{;} \alpha_i'= \beta'_i$. Now set
\[
X' = \bigcup_{i\in I}X'_i, \quad \displaystyle{\le}'  =\bigcup_{i\in I}{\le}'_i, \quad E' = \displaystyle \bigcup_{i \in I}E'_i, \quad \alpha' = \bigcup_{i \in I}\alpha'_i  \quad \textnormal{ and } \quad \beta' = \bigcup_{i\in I}\beta_i'.
\]
Then $\prod_{i \in I}\mathbf{Dq}\left(\left(X^2_i, \preceq_i\right)\right) \cong \prod_{i \in I}\mathbf{Dq}\left(\left(X'^2_i, \preceq'_i\right)\right)$. Hence, it follows from Theorem \ref{thm:4.1} that $\mathbf{A} \cong \mathbf{Dq}\left(\mathbf{E}'\right)$.
\end{proof}

We are now ready to define the class of representable distributive quasi-relation algebras.  

\begin{df}\label{def:RDqRA}
A distributive quasi relation algebra 
$\mathbf{A}$ 
is \emph{representable} if 
$\mathbf{A} \in \mathbb{ISP}\left(\mathsf{FDqRA}\right)$
or, equivalently, $\mathbf{A} \in \mathbb{IS}\left(\mathsf{EDqRA}\right)$. 
\end{df}

Following the tradition for relation algebras, we will say that a DqRA is \emph{finitely representable} if it is representable using a finite poset. An immediate question is whether the class of finite DqRAs can be partitioned into those that are non-representable, those that are representable over a finite poset, and those that are representable but not over a finite poset (see the discussion of the relation algebra case by Maddux~\cite[Section 9]{Mad94}). 

\section{Representations of small distributive quasi relation algebras} \label{sec:small-rep}

In this section we give a number of examples of small 
DqRAs that are representable and also give examples for which we do not have representations but where we know the representation must be infinite. 
An algebra of particular interest is the
DqRA that has the 
three-element Sugihara chain as a reduct (see Figure~\ref{fig:representable_chains}(iii)). 
Jipsen and \v{Semrl}~\cite[Section 6]{JS23} observe that it is not representable 
by weakening relations in their set-up.
However, we are able to obtain a representation using a non-identity $\alpha$ (see Example~\ref{ex:chains1} below).

Below we give brief descriptions of representations for 15 different finite DqRAs. In the diagrams, black nodes denote idempotent elements. 
If a non-obvious product is not indicated, then it is $\bot$.
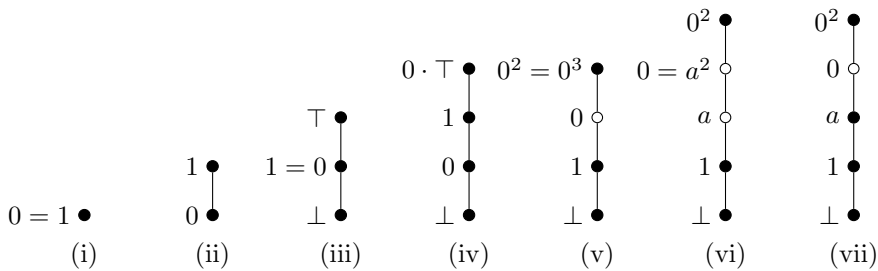
\begin{figure}[ht]
\centering
\begin{tikzpicture}[scale=0.65]
\begin{scope}[xshift=0.4cm]
\node[draw,circle,inner sep=1.5pt,fill] (0) at (0,0) {};
\node[label,anchor=east,xshift=1pt] at (0) {$0=1$};
\node[label,anchor=north,yshift=-4pt] at (0) {(i)};
\end{scope}
\begin{scope}[xshift=2.6cm]
\node[draw,circle,inner sep=1.5pt,fill] (bot) at (0,0) {};
\node[draw,circle,inner sep=1.5pt,fill] (top) at (0,1) {};
\draw[order] (bot)--(top);
\node[label,anchor=east,xshift=1pt] at (top) {$1$};
\node[label,anchor=east,xshift=1pt] at (bot) {$0$};
\node[label,anchor=north,yshift=-4pt] at (bot) {(ii)};
\end{scope}
\begin{scope}[xshift=5.2cm]
\node[draw,circle,inner sep=1.5pt,fill] (bot) at (0,0) {};
\node[draw,circle,inner sep=1.5pt,fill] (id) at (0,1) {};
\node[draw,circle,inner sep=1.5pt,fill] (top) at (0,2) {};
\draw[order] (bot)--(id)--(top);
\node[label,anchor=east,xshift=1pt] at (top) {$\top$};
\node[label,anchor=east,xshift=1pt] at (id) {$1=0$};
\node[label,anchor=east,xshift=1pt] at (bot) {$\bot$};
\node[label,anchor=north,yshift=-4pt] at (bot) {(iii)};
\end{scope}
\begin{scope}[xshift=7.8cm]
\node[draw,circle,inner sep=1.5pt,fill] (bot) at (0,0) {};
\node[draw,circle,inner sep=1.5pt,fill] (0) at (0,1) {};
\node[draw,circle,inner sep=1.5pt,fill] (1) at (0,2) {};
\node[draw,circle,inner sep=1.5pt,fill] (top) at (0,3) {};
\draw[order] (bot)--(0)--(1)--(top);
\node[label,anchor=east,xshift=1pt] at (top) {$0\cdot \top$};
\node[label,anchor=east,xshift=1pt] at (1) {$1$};
\node[label,anchor=east,xshift=1pt] at (0) {$0$};
\node[label,anchor=east,xshift=1pt] at (bot) {$\bot$};
\node[label,anchor=north,yshift=-4pt] at (bot) {(iv)};
\end{scope}
\begin{scope}[xshift=10.4cm]
\node[draw,circle,inner sep=1.5pt,fill] (bot) at (0,0) {};
\node[draw,circle,inner sep=1.5pt,fill] (1) at (0,1) {};
\node[unshaded] (0) at (0,2) {};
\node[draw,circle,inner sep=1.5pt,fill] (top) at (0,3) {};
\draw[order] (bot)--(1)--(0)--(top);
\node[label,anchor=east,xshift=1pt] at (top) {$0^2=0^3$};
\node[label,anchor=east,xshift=1pt] at (1) {$1$};
\node[label,anchor=east,xshift=1pt] at (0) {$0$};
\node[label,anchor=east,xshift=1pt] at (bot) {$\bot$};
\node[label,anchor=north,yshift=-4pt] at (bot) {(v)};
\end{scope}
\begin{scope}[xshift=13cm]
\node[draw,circle,inner sep=1.5pt,fill] (bot) at (0,0) {};
\node[draw,circle,inner sep=1.5pt,fill] (1) at (0,1) {};
\node[unshaded] (a) at (0,2) {};
\node[unshaded] (0) at (0,3) {};
\node[draw,circle,inner sep=1.5pt,fill] (top) at (0,4) {};
\draw[order] (bot)--(1)--(a)--(0)--(top);
\node[label,anchor=east,xshift=1pt] at (top) {$0^2$};
\node[label,anchor=east,xshift=1pt] at (0) {$0=a^2$};
\node[label,anchor=east,xshift=1pt] at (a) {$a$};
\node[label,anchor=east,xshift=1pt] at (1) {$1$};
\node[label,anchor=east,xshift=1pt] at (bot) {$\bot$};
\node[label,anchor=north,yshift=-4pt] at (bot) {(vi)};
\end{scope}
\begin{scope}[xshift=15.6cm]
\node[draw,circle,inner sep=1.5pt,fill] (bot) at (0,0) {};
\node[draw,circle,inner sep=1.5pt,fill] (1) at (0,1) {};
\node[draw,circle,inner sep=1.5pt,fill] (a) at (0,2) {};
\node[unshaded] (0) at (0,3) {};
\node[draw,circle,inner sep=1.5pt,fill] (top) at (0,4) {};
\draw[order] (bot)--(1)--(a)--(0)--(top);
\node[label,anchor=east,xshift=1pt] at (top) {$0^2$};
\node[label,anchor=east,xshift=1pt] at (1) {$1$};
\node[label,anchor=east,xshift=1pt] at (a) {$a$};
\node[label,anchor=east,xshift=1pt] at (0) {$0$};
\node[label,anchor=east,xshift=1pt] at (bot) {$\bot$};
\node[label,anchor=north,yshift=-4pt] at (bot) {(vii)};
\end{scope}
\end{tikzpicture}
\caption{Some representable chains up to $5$ elements. 
Here all chains are commutative and $0^3 = 0^2\cdot 0 = 0\cdot 0^2$. In (vi) and (vii) we have $0^3=0\cdot a=0^2\cdot a = 0^2$. 
}\label{fig:representable_chains}
\end{figure}

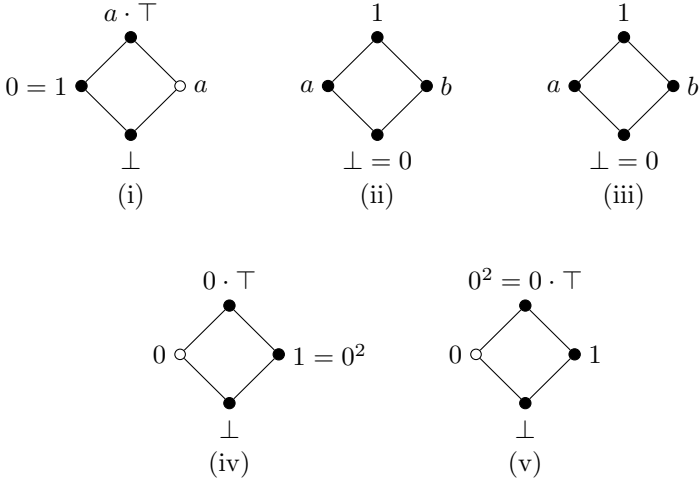
\begin{figure}[ht]
\centering
\begin{tikzpicture}[scale=0.65]
\begin{scope}
\node[draw,circle,inner sep=1.5pt,fill] (bot) at (0,0) {};
\node[draw,circle,inner sep=1.5pt,fill] (0) at (-1,1) {};
\node[unshaded] (a) at (1,1) {};
\node[draw,circle,inner sep=1.5pt,fill] (top) at (0,2) {};
\draw[order] (bot)--(0)--(top);
\draw[order] (bot)--(a)--(top);
\node[label,anchor=south] at (top) {$a\cdot \top$};
\node[label,anchor=east,xshift=1pt] at (0) {$0=1$};
\node[label,anchor=west,xshift=-1pt] at (a) {$a$};
\node[label,anchor=north] at (bot) {$\bot$};
\node[label,anchor=north,yshift=-12pt] at (bot) {(i)};
\end{scope}
\begin{scope}[xshift=5cm]
\node[draw,circle,inner sep=1.5pt,fill] (0) at (0,0) {};
\node[draw,circle,inner sep=1.5pt,fill] (a) at (-1,1) {};
\node[draw,circle,inner sep=1.5pt,fill] (b) at (1,1) {};
\node[draw,circle,inner sep=1.5pt,fill] (1) at (0,2) {};
\draw[order] (0)--(a)--(1);
\draw[order] (0)--(b)--(1);
\node[label,anchor=south] at (1) {$1$};
\node[label,anchor=east,xshift=1pt] at (a) {$a$};
\node[label,anchor=west,xshift=-1pt] at (b) {$b$};
\node[label,anchor=north] at (0) {$\bot = 0$};
\node[label,anchor=north,yshift=-12pt] at (0) {(ii)};
\end{scope}
\begin{scope}[xshift=10cm]
\node[draw,circle,inner sep=1.5pt,fill] (0) at (0,0) {};
\node[draw,circle,inner sep=1.5pt,fill] (a) at (-1,1) {};
\node[draw,circle,inner sep=1.5pt,fill] (b) at (1,1) {};
\node[draw,circle,inner sep=1.5pt,fill] (1) at (0,2) {};
\draw[order] (0)--(a)--(1);
\draw[order] (0)--(b)--(1);
\node[label,anchor=south] at (1) {$1$};
\node[label,anchor=east,xshift=1pt] at (a) {$a$};
\node[label,anchor=west,xshift=-1pt] at (b) {$b$};
\node[label,anchor=north] at (0) {$\bot = 0$};
\node[label,anchor=north,yshift=-12pt] at (0) {(iii)};
\end{scope}
\begin{scope}[xshift=2cm, yshift=-5.5cm]
\node[draw,circle,inner sep=1.5pt,fill] (bot) at (0,0) {};
\node[unshaded] (0) at (-1,1) {};
\node[draw,circle,inner sep=1.5pt,fill] (1) at (1,1) {};
\node[draw,circle,inner sep=1.5pt,fill] (top) at (0,2) {};
\draw[order] (bot)--(0)--(top);
\draw[order] (bot)--(1)--(top);
\node[label,anchor=south] at (top) {$0\cdot\top$};
\node[label,anchor=east,xshift=1pt] at (0) {$0$};
\node[label,anchor=west,xshift=-1pt] at (1) {$1=0^2$};
\node[label,anchor=north] at (bot) {$\bot$};
\node[label,anchor=north,yshift=-12pt] at (bot) {(iv)};
\end{scope}
\begin{scope}[xshift=8cm,yshift=-5.5cm]
\node[draw,circle,inner sep=1.5pt,fill] (bot) at (0,0) {};
\node[unshaded] (0) at (-1,1) {};
\node[draw,circle,inner sep=1.5pt,fill] (1) at (1,1) {};
\node[draw,circle,inner sep=1.5pt,fill] (top) at (0,2) {};
\draw[order] (bot)--(0)--(top);
\draw[order] (bot)--(1)--(top);
\node[label,anchor=south] at (top) {$0^2=0\cdot \top$};
\node[label,anchor=east,xshift=1pt] at (0) {$0$};
\node[label,anchor=west,xshift=-1pt] at (1) {$1$};
\node[label,anchor=north] at (bot) {$\bot$};
\node[label,anchor=north,yshift=-12pt] at (bot) {(v)};
\end{scope}
\end{tikzpicture}
\caption{Some $4$-element diamond representable quasi-relation algebras. 
All algebras listed here are commutative. In (i) we have $a^2 = 0 = 1=-1=1'$ and $-a = a' =a$. In (ii) we have $-a=b$, $-b = a$, $a'=a$ and $b'=b$, while in (iii) we have $-a = a'= b$ and $-b = b' = a$.  
}\label{fig:representable_diamonds}
\end{figure}

\begin{ex}\label{ex:chains1}
Algebras (i) and (iii) in Figure \ref{fig:representable_chains} and (i) in Figure \ref{fig:representable_diamonds} can be represented over $X = \left\{x, y\right\}$ with order ${\leqslant} = \text{id}_X$, $\alpha = \left\{\left(x, y\right), \left(y, x\right)\right\}$ and $\beta = \text{id}_X$ (or $\beta =\left\{\left(x, y\right), \left(y, x\right)\right\}$). In all three cases we take $E = X^2$. Then $0 = \alpha\mathbin{;} \left({\le}^c\right)^\smile = \left({\le}^c\right)^\smile\mathbin{;} \alpha = {\le}$. The identity $1$ maps to $\le$,  $\bot$ maps to $\varnothing$ and $\top$ to $X^2$, while $a$ in Figure \ref{fig:representable_diamonds}(i) maps to $\left\{\left(x,y\right), \left(y, x\right)\right\}$.  
\end{ex}

\begin{ex}
Algebras (ii) in Figure \ref{fig:representable_chains} and (iii) in Figure \ref{fig:representable_diamonds} can be represented over $X = \left\{x, y\right\}$ with $E = {\le} = \alpha = \beta = \text{id}_X$. In both cases, $1$ is mapped  ${\le} = E = X^2$ and $0$ is mapped to $\varnothing$. For (iii) in Figure \ref{fig:representable_diamonds}, $a$ is mapped to $\left\{\left(x, x\right)\right\}$ and $b$ to $\left\{\left(y, y\right)\right\}$. 
\end{ex}

\begin{ex}\label{ex:S4}
Algebra (iv) from Figure~\ref{fig:representable_chains} has the four-element Sugihara monoid as a term subreduct. It is  representable over 
$\left\langle \mathbb{Q}, \le\right\rangle$ (i.e., the rationals with the usual order) with $\alpha =\mathrm{id}_X$ and $\beta(x)=-x$. This is based on the work of Maddux~\cite{Mad2010} (see also~\cite{CR24}). It can be finitely represented by the X-shaped poset $X=\{v,w,x,y,z\}$ where $v,w<x$ and $x<y,z$. We use $\alpha=\{(v,w),(w,v),(x,x),(y,z),(z,y)\}$, $\beta=\{(y,v),(v,y),(x,x),(z,w),(w,z)\}$ and $E=X^2$. The embedding maps $1$ to $\leqslant$, $\bot$ to $\varnothing$, $\top$ to $X^2$ and $0$ to $\leqslant {\setminus} \{(x,x)\}$.
\end{ex}

\begin{ex}
The chain (v) in Figure \ref{fig:representable_chains} can be represented over $X = \left\{x, y, z\right\}$ with ${\le} = \text{id}_X$, $\alpha = \left\{\left(x,y\right), \left(y, z\right), \left(z,x\right)\right\}$, $\beta = \left\{\left(x, y\right), \left(y, x\right), \left(z,z\right)\right\}$ and $E = X^2$. The identity $1$ is mapped to $\le$, $\bot$ to $\varnothing$ and $\top$ to $E$. 
\end{ex}

\begin{ex}
Chain (vi) in Figure \ref{fig:representable_chains} can be represented over the set $X = \left\{w, x, y, z\right\}$ with ${\le} = \text{id}_X$, $E = X^2$, $\alpha = \left\{\left(w,x\right), \left(x, y\right), \left(y, z\right), \left(z, w\right)\right\}$ and $\beta = \left\{\left(w, x\right), \left(x, w\right), \left(y, z\right), \left(z, y\right)\right\}$. Here $\bot$ is mapped to $\varnothing$, $\top$ to $0^2 = E = X^2$ and $a$ to $\text{id}_X \cup \left\{\left(w, z\right), \left(x, w\right), \left(y, x\right), \left(z, y\right)\right\}$. 
For chain (vii) 
we use the same 
$(X,\leqslant)$, $E$ and $\beta$, but 
here we set 
$\alpha = \left\{\left(w, y\right), \left(y, w\right), \left(x, z\right), \left(z, x\right)\right\}$ and $a$ is mapped to $\text{id}_X \cup \left\{\left(w, x\right), \left(x, w\right), \left(y, z\right), \left(z, y\right)\right\}$.
\end{ex}

\begin{ex}
The diamond in Figure \ref{fig:representable_diamonds}(ii) can be represented over $X = \left\{w, x, y, z\right\}$ with order 
${\le} = \text{id}_X \cup \left\{\left(w,x\right), \left(w,y\right), \left(w,z\right), \left(x,z\right), \left(y, z\right)\right\}$, 
$\alpha= $ \break $\left\{\left(x,y\right), \left(y, x\right), \left(w,w\right), \left(z,z\right)\right\}$, $\beta = \left\{\left(x,x\right), \left(y, y\right), \left(w,z\right), \left(z,w\right)\right\}$ and $E = X^2$. Here $a$ is mapped to $\left\{\left(w,x\right), \left(w, y\right), \left(w,z\right),\left(x,x\right), \left(x,z\right), \left(y,y\right), \left(y,z\right), \left(z,z\right)\right\}$ and  $b$ to $\left\{\left(w, w\right),\left(w,x\right), \left(w, y\right), \left(w,z\right),\left(x,x\right), \left(x,z\right), \left(y,y\right), \left(y,z\right)\right\}$.
\end{ex}

\begin{ex}
The diamond in Figure \ref{fig:representable_diamonds}(iv) can be represented over $X = \left\{x, y\right\}$ with ${\le} = \alpha =\text{id}_X$, $\beta = \left\{\left(x, y\right), \left(y, x\right)\right\}$ and $E = X^2$. The identity $1$ is mapped to $\le$, $\bot$ is mapped to $\varnothing$, $\top$ to $E = X^2$ and $0$ to $\left(\le^c\right)^\smile = \left\{\left(x,y\right), \left(y, x\right)\right\}$. This construction also works with $\beta=\mathrm{id}_X$. 
\end{ex}

\begin{ex}
The diamond in Figure \ref{fig:representable_diamonds}(v) can be represented over $X = \left\{x, y,z\right\}$ with ${\le} = \alpha= \beta =\text{id}_X$ (or $\beta = \left\{\left(x, y\right), \left(y, x\right), \left(z, z\right)\right\}$) and $E = X^2$. 
\end{ex}

\begin{figure}[ht]
\centering
\begin{tikzpicture}[scale=0.65]
\begin{scope}
\node[draw,circle,inner sep=1.5pt,fill] (bot) at (0,0) {};
\node[unshaded] (0) at (0,1) {};
\node[draw,circle,inner sep=1.5pt,fill] (a) at (-1,2) {};
\node[draw,circle,inner sep=1.5pt,fill] (b) at (1,2) {};
\node[draw,circle,inner sep=1.5pt,fill] (1) at (0,3) {};
\node[draw,circle,inner sep=1.5pt,fill] (top) at (0,4) {};
\draw[order] (bot)--(0)--(a)--(1)--(top);
\draw[order] (bot)--(0)--(b)--(1)--(top);
\node[label,anchor=south] at (top) {$\top$};
\node[label,anchor=east,xshift=1pt] at (0) {$0$};
\node[label,anchor=east,xshift=1pt] at (a) {$a=a\cdot\top$};
\node[label,anchor=west,xshift=-1pt] at (b) {$b=\top\cdot b$};
\node[label,anchor=east,xshift=1pt] at (1) {$1$};
\node[label,anchor=north] at (bot) {$\bot = 0^2$};
\node[label,anchor=north,yshift=-12pt] at (bot) {(i)};
\end{scope}
\begin{scope}[xshift=6.75cm]
\node[draw,circle,inner sep=1.5pt,fill] (bot) at (0,0) {};
\node[draw,circle,inner sep=1.5pt,fill] (0) at (-1,1) {};
\node[draw,circle,inner sep=1.5pt,fill] (1) at (0,2) {};
\node[draw,circle,inner sep=1.5pt,fill] (a) at (1,1) {};
\node[draw,circle,inner sep=1.5pt,fill] (b) at (-2,2) {};
\node[draw,circle,inner sep=1.5pt,fill] (top) at (-1,3) {};
\draw[order] (bot)--(0)--(b)--(top);
\draw[order] (bot)--(a)--(1)--(top);
\draw[order] (bot)--(0)--(1)--(top);
\node[label,anchor=south] at (top) {$\top$};
\node[label,anchor=east,xshift=1pt] at (0) {$0$};
\node[label,anchor=west,xshift=-1pt] at (1) {$1$};
\node[label,anchor=west,xshift=-1pt] at (a) {$a$};
\node[label,anchor=east,xshift=1pt] at (b) {$b$};
\node[label,anchor=north] at (bot) {$\bot$};
\node[label,anchor=north,yshift=-12pt] at (bot) {(ii)};
\end{scope}
\begin{scope}[xshift=12cm]
\node[draw,circle,inner sep=1.5pt,fill] (bot) at (0,0) {};
\node[draw,circle,inner sep=1.5pt,fill] (b) at (0,1) {};
\node[draw,circle,inner sep=1.5pt,fill] (a) at (-1,1) {};
\node[unshaded] (0) at (1,1) {};
\node[draw,circle,inner sep=1.5pt,fill] (1) at (-1,2) {};
\node[draw,circle,inner sep=1.5pt,fill] (c) at (0,2) {};
\node[unshaded] (d) at (1,2) {};
\node[draw,circle,inner sep=1.5pt,fill] (top) at (0,3) {};
\draw[order] (bot)--(b)--(1)--(top);
\draw[order] (bot)--(b)--(d)--(top);
\draw[order] (bot)--(a)--(1)--(top);
\draw[order] (bot)--(a)--(c)--(top);
\draw[order] (bot)--(0)--(d)--(top);
\draw[order] (bot)--(0)--(c)--(top);
\node[label,anchor=south] at (top) {$\top \cdot d$};
\node[label,anchor=east,xshift=1pt] at (a) {$a=0^2$};
\node[label,anchor=east,xshift=1pt] at (b) {$b$};
\node[label,anchor=west,xshift=-1pt] at (0) {$0$};
\node[label,anchor=east,xshift=1pt] at (1) {$1=d^2$};
\node[label,anchor=west,xshift=-1pt] at (c) {$c$};
\node[label,anchor=west,xshift=-1pt] at (d) {$d$};
\node[label,anchor=north] at (bot) {$\bot$};
\node[label,anchor=north,yshift=-12pt] at (bot) {(iii)};
\end{scope}
\end{tikzpicture}
\caption{Other small representable cyclic distributive quasi relation algebras. In (i) we have $a\cdot b = a\cdot 0 = 0\cdot b = 0$, $\top \cdot a = b \cdot \top = \top$, $-a = b = b'$, $-b = a = a'$. Algebras (ii) and (iii) are commutative. In (ii) we have $a\cdot \top = a$, $0\cdot b= b\cdot \top = 0\cdot \top= b$,$-a = a' = b$ and $-b = b' = a$. Finally, in (iii) we have $0\cdot d = a$, $b\cdot d = b\cdot \top = b$, $a\cdot 0 = a\cdot d = 0$, $a\cdot c = a \cdot \top = c \cdot 0 = c \cdot \top = c \cdot d = c$, $-a = a' = d$ and $-b = b' = c$.}
\label{fig:bigger_examples}
\end{figure}
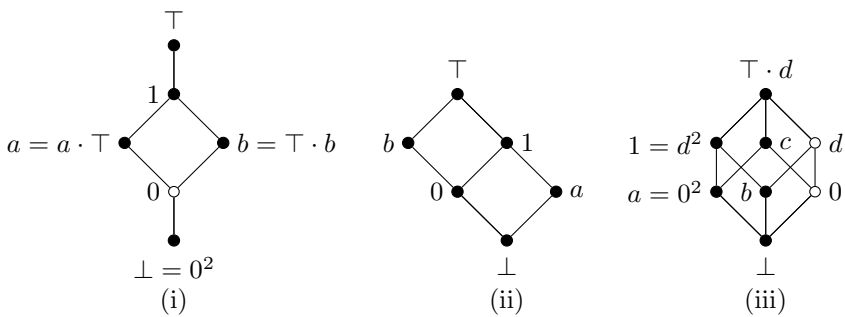

\begin{ex}
As we have seen in Examples \ref{ex:two-elmt-chain} and \ref{ex:prime-for-6elmt}, algebra (i) in Figure~\ref{fig:bigger_examples} can be represented over $X = \left\{x, y\right\}$ with  $\left\{\left(x,x\right), \left(x, y\right), \left(y, y\right)\right\}$, $E = X^2$, $\alpha = \text{id}_{X}$ and $\beta = \left\{\left(x, y\right), \left(y, x\right)\right\}$. 
\end{ex}

\begin{ex}
Both algebras (ii) and (iii) in Figure \ref{fig:bigger_examples} can be represented over $X = \left\{x, y, z\right\}$ with ${\le} = \text{id}_X$ and $E = \left\{\left(x, x\right), \left(y, y\right), \left(z, z\right), \left(x, y\right), \left(y, x\right)\right\}$. For (ii), we set $\alpha = \left\{\left(x, y\right), \left(y, x\right), \left(z, z\right)\right\}$ and $\beta = \text{id}_X$, and $\bot$ is mapped to $\varnothing$, $\top$ to $E$, $a$ to $\left\{\left(z, z\right)\right\}$ and $b$ to $\left\{\left(x, x\right), \left(y, y\right), \left(x, y\right), \left(y, x\right)\right\}$. For (iii), we set $\alpha = \text{id}_X$ and $\beta = \left\{\left(x, y\right), \left(y, x\right), \left(z, z\right)\right\}$, and $\bot$ is mapped to $\varnothing$, $\top$ to $E$, $a$ to $\left\{\left(x, x\right), \left(y, y\right)\right\}$ and $b$ to $\left\{\left(z, z\right)\right\}$.
\end{ex}

Our last example shows the importance of using a non-identity $\alpha$ in the construction. It is the  existence of a non-identity order automorphism that enables us to construct concrete non-cyclic DqRAs (see Theorem~\ref{thm:InFL-algebra}(ii)). 

\begin{ex}\label{ex:non-cyclic-concrete}
Let $X=\{x,y\}$ with ${\leqslant} = \{(x,x),(y,y)\}$, $\alpha = \{(x,y),(y,x)\}$ and $\beta = \text{id}_X$. Since the order on the poset is discrete, the underlying lattice structure of the full representable DqRA is that of a 16-element Boolean lattice. Consider $R=X^2{\setminus}\{(x,x)\}$. Clearly then $R^c=\{(x,x)\}$, and so $R'=$ \break $\alpha \mathbin{;} \beta \mathbin{;} R^c \mathbin{;} \beta = \alpha \mathbin{;} R^c = \{(y,x)\}$.
Hence, $'$ is not the complement and so $\mathbf{Dq}((X^2,\preceq))$ is not a relation algebra. However, note that this distributive quasi relation algebra contains the relation algebra (i) in Figure \ref{fig:representable_diamonds} as a subalgebra. 
\end{ex}

The following theorem gives a sufficient condition for a distributive quasi relation algebra to be not finitely representable. 
\begin{thm}\label{thm:non-rep}
Let $\mathbf A = \langle A, \wedge, \vee,\cdot, 1, 0, {\sim},{-}, '\rangle$ be a distributive quasi relation algebra. If there exists  $a \in A$ such that $0 < a < 1$ and $a\cdot a \leqslant 0$, then 
\begin{enumerate}[\normalfont (i)]
\item $\mathbf{A}$ is not representable over $(X,\leqslant)$ and $E$ with $\alpha$ having finite order; 
\item $\mathbf{A}$ is not finitely representable.
\end{enumerate} 
\end{thm}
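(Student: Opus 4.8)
The plan is to transport the inequality $a\cdot a\le 0$ into an arbitrary representation and to reduce parts~(ii) and~(iii) to part~(i).

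Fix notation for a representation: suppose $\mathbf A$ embeds into $\mathbf{Dq}(\mathbf E)$, where $\mathbf X=(X,\le)$ is a poset, $E\supseteq\;\le$ an equivalence relation, $\alpha$ an order automorphism of $\mathbf X$ and $\beta$ a self-inverse dual order automorphism with $\beta=\alpha\circ\beta\circ\alpha$, as in Theorem~\ref{thm:main_result}; here $1=\;\le$ and $0=\alpha\circ(\le^{c})^{\smile}=(\le^{c})^{\smile}\circ\alpha$. Let $R\in\mathsf{Up}(\mathbf E)$ be the image of $a$. Since an embedding preserves $\wedge,\vee,\cdot,0,1$, we have $0\subsetneq R\subsetneq\;\le$ and $R\circ R\subseteq 0$. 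The first thing I would record is the membership criterion for $0$: for $(x,y)\in E$,
\[
(x,y)\in 0\iff\alpha^{-1}(y)\not\le x,\qquad\text{equivalently}\qquad (x,y)\notin 0\iff\alpha^{-1}(y)\le x,
\]
which is immediate from the definitions of composition and of $(\le^{c})^{\smile}$.

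For part~(i), assume $|\alpha|=n<\infty$. Since $0\subsetneq R$, pick $(u,v)\in R\setminus 0$. From $R\subseteq\;\le$ we get $u\le v$, and from $(u,v)\notin 0$ we get $\alpha^{-1}(v)\le u$, hence $\alpha^{-1}(v)\le u\le v$. Applying the order automorphism $\alpha^{-1}$ repeatedly yields $\alpha^{-k}(v)\le\cdots\le\alpha^{-1}(v)\le v$ for all $k\ge 1$; taking $k=n$ and using $\alpha^{-n}=\mathrm{id}_X$ forces all these terms to coincide, so $\alpha^{-1}(v)=v$ and hence $u=v$. Then $(u,u)\in R$, so $(u,u)\in R\circ R\subseteq 0$, contradicting $(u,v)=(u,u)\notin 0$. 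Part~(ii) follows at once: a representation over a finite poset has $\alpha$ a permutation of a finite set, hence of finite order, so part~(i) applies.

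For part~(iii), $\mathbf X$ is a chain and one wants to exclude the configuration $0\subsetneq R\subsetneq\;\le$, $R\circ R\subseteq 0$ \emph{without} assuming $\alpha$ has finite order. One can still extract from a chosen $(u,v)\in R\setminus 0$, over any poset, that $(u,u)\notin R$ and $(v,v)\notin R$ (else $(u,u)\circ(u,v)$ or $(u,v)\circ(v,v)$ lies in $R\circ R\subseteq 0$, contradicting $(u,v)\notin 0$), hence, since $0\subseteq R$, that $\alpha^{-1}(u)\le u$ and $\alpha^{-1}(v)\le v$, and in particular $R\cap\mathrm{id}_X\subseteq 0$. The point of~(iii) is then to exploit linearity of $\le$: the criterion above shows $1\setminus 0=\{(x,y)\in E:\alpha^{-1}(y)\le x\le y\}$ is a thin band around the diagonal, and one aims to show that an up-set $R$ with $0\subseteq R\subseteq\;\le$ and $R\circ R\subseteq 0$ cannot meet this band properly, forcing $R=0$ and contradicting $R\supsetneq 0$. \emph{This last step is the main obstacle}: it must rule out ``translation-type'' order automorphisms of an infinite chain, so it has to use totality of $\le$ essentially --- for instance through the constraint $\beta=\alpha\circ\beta\circ\alpha$ linking $\alpha$ to the dual automorphism $\beta$, or an extremality argument on the witnessing pairs $(u,v)$; the finite-chain subcase is of course already subsumed by part~(ii).
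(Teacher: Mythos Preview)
Your arguments for (i) and (ii) are correct and are essentially the paper's proof. The paper chooses $(x,y)\in h(a)\setminus h(0)$, obtains $x\le y$ from $h(a)\subseteq\;\le$, rules out $x=y$ via $(x,x)\in h(a)\circ h(a)\subseteq h(0)$, and from $(x,y)\notin h(0)=\alpha\circ(\le^{c})^{\smile}$ reads off $y\le\alpha(x)$; then $x<y\le\alpha(x)$ gives $x<\alpha(x)<\cdots<\alpha^{n}(x)=x$. You do the mirror image: from $(u,v)\notin 0=(\le^{c})^{\smile}\circ\alpha$ you get $\alpha^{-1}(v)\le u\le v$ and iterate $\alpha^{-1}$ to collapse the chain. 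Same contradiction, same derivation of (ii) from (i).

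For (iii) the paper's entire proof is the sentence ``(iii) holds since chains have $|\alpha|=1$.'' You did not find this short-cut, but your instinct that something is missing is in fact correct: the claim that any order automorphism of a chain satisfying the hypotheses must be the identity fails for infinite chains. Take $\mathbf X=(\mathbb{Z},\le)$, $E=\mathbb{Z}^{2}$, $\alpha(n)=n+1$, $\beta(n)=-n$; then $\beta$ is a self-inverse dual order automorphism, $\alpha$ is an order automorphism, and $\alpha\circ\beta\circ\alpha(n)=-(n+1)+1=-n=\beta(n)$, so all hypotheses of Theorem~\ref{thm:main_result} hold while $|\alpha|=\infty$. This is precisely your ``translation-type'' obstruction. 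Thus the paper's one-line argument for (iii) has a gap; for \emph{finite} chains (iii) is of course subsumed by (ii), but for infinite chains neither your sketch nor the paper's line resolves the question.
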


\begin{proof}
We prove (i).
Let $\mathbf A$ be a distributive quasi relation algebra with $a\in A$ such that $0 < a < 1$ and $a\cdot a \leqslant 0$. Suppose for the sake of a contradiction that 
$\mathbf{A}$ is representable over $(X,\leqslant)$ and $E$ with 
$|\alpha|=n\in\mathbb{Z}^+$. 
Then there is an embedding $h: A \to \text{Up}\left(\mathbf E\right)$ such that $h(0) = \alpha\mathbin{;} \left(\le^c\right)^\smile$ and $h(1) = {\le}$. 

Since $h(0) \subset h(a)$, there is some $\left(x, y\right) \in E$ such that $\left(x, y\right) \in h(a)$ while $\left(x, y\right) \notin h(0)$. But $h(a) \subset h(1) = {\le}$, so we have $x \le y$. We claim that $x < y$. To see why, suppose $x = y$. Then $\left(x, y\right) \in h(a) \mathbin{;} h(a) \subseteq h(0)$, which is a contradiction. 

Since $(x,y) \notin h(0)=\alpha \mathbin{;} (\le^c)^\smile$, we have $(x,y) \in \alpha \mathbin{;} (\le^\smile)$ so $y \le \alpha(x)$. Now $x < y \le \alpha(x)$, and so, since $\alpha$ is an order automorphism, we obtain $x < \alpha(x) <\alpha(\alpha(x)) < \cdots < \alpha^n(x) = x$, a contradiction. 

Item (ii) follows from (i).
\end{proof}

\begin{ex}\label{ex:7elmt-non-rep} The seven-element non-cyclic DqRA on the left of Figure~\ref{fig:non-cyclic} has $c \cdot c=0$ and hence by Theorem~\ref{thm:non-rep}  is not finitely representable.
\end{ex}
\begin{ex}\label{ex:non-rep-chain}
Consider the three-element chain $0<a<1$ with $a\cdot a =0$. 
Clearly by Theorem~\ref{thm:non-rep} this is not finitely representable. 
Note that if it were non-representable, then Example~\ref{ex:7elmt-non-rep} would also be non-representable. 
Using Mace4 we have found a DqRA whose underlying lattice is a five-element chain $a<0<b<1<c$. It has  $b\cdot b =a$ and hence is also not finitely representable. 
\end{ex}

The only DqRAs with fewer than five elements
for which we do not have representations are 
Example~\ref{ex:non-rep-chain}, two four-element chains, and the four-element diamond which has 
$\bot < 0=1<\top$ and $\bot<c<\top$. The non-trivial products are $c\cdot c=c \cdot \bot = \bot \cdot c = \bot \cdot \bot =\bot$, $\top \cdot c = c \cdot \top = c$ and $\bot \cdot \top = \top \cdot \bot = \bot$. 

We have not been able to identify a non-representable DqRA. The most obvious candidates are the cyclic DqRAs 
that are term reducts of 
non-representable relation algebras. That is, where $\mathbf{A}=\langle A, \wedge, \vee, ', \bot, \top,\cdot, 1, ^\smile\rangle$, one considers the 
term reduct 
$d(\mathbf{A})=\langle A, \wedge, \vee, ', \cdot, 1, \sim,-\rangle$ where ${\sim}a={-}a=(a')^\smile$. 
We believe that an attempt to show that such algebras are not representable should include a study of the role of integrality.
Translating the concept of integrality from relation algebras to DqRAs would 
say that $1$ is join-irreducible (or completely join-irreducible if $\mathbf{A}$ is infinite). 

Notice that all of the algebras discussed in 
Example~\ref{ex:chains1} are cyclic, yet they are represented using $E=X^2$ with 
$\alpha\neq \text{id}_X$. This demonstrates that non-cyclic algebras of the form $\mathbf{Dq}(\mathbf{E})$ can be used to 
represent  cyclic DqRAs.
Proposition~\ref{prop:cyclic-elmts} makes clear why this is possible in general. 
This ability to represent cyclic DqRAs within larger non-cyclic algebras could possibly lead to representations of non-representable relation algebras.

\section{Conclusions}\label{sec:conclusions}

The main achievement of this paper is the construction given in Section~\ref{sec:construct}. Besides its application here, it could be used (without $\beta$, and $'$ given by complement-converse) to give  models of 
distributive DiDmFL$'$-algebras, distributive InDmFL$'$-algebras and distributive CyDiDmRL$'$-algebras.

Throughout the paper we have noted the connections between our construction and representability of so-called weakening relation algebras. 
Given any poset $(X,\leqslant)$, an algebra of weakening relations can be constructed. This was developed in the papers by Galatos and Jipsen~\cite{GJ20-AU, GJ20-ramics}, and has been recently explored by Jipsen and \v{S}emrl~\cite{JS23}. It is natural to wonder if one can embed any of these classes of representable algebras ($\mathsf{RWkRA}$, $\mathsf{RwkRA}$, $\mathsf{RDqRA}$) into any of the other classes. The fact that the three-element Sugihara chain is not an element of $\mathsf{RwkRA}$ shows that there cannot be an embedding from $\mathsf{RDqRA}$ into $\mathsf{RwkRA}$. Also, the fact that 
the full weakening relation algebra from the three-element poset in Figure~\ref{fig:ex_construction_50elements} 
is an element of both $\mathsf{RWkRA}$ and $\mathsf{RwkRA}$ indicates that neither of those classes can be embedded in $\mathsf{RDqRA}$.

The definition of representable distributive quasi relation algebras given in this paper immediately leads to a number of interesting open questions. 
\begin{enumerate}
\item Does the class $\RDqRA$ form a variety? 
Famously, $\mathsf{RRA}$ does form a variety. We commented at the start of Section~\ref{subsec:construct} that our construction resembles that used for $\RWkRA$ and $\RwkRA$. By~\cite[Corollary 6.2]{GJ20-AU}, $\RWkRA$ is a variety but in~\cite[Section 6]{JS23} it is shown that $\RwkRA$ is \emph{not} a variety. 
\item Are there non-representable DqRAs? As suggested in Section~\ref{sec:small-rep}, the easiest ones to find could be 
term reducts 
of non-representable relation algebras. 
\item Maddux~\cite{Mad78} defines the classes $\mathsf{K}$, $\mathsf{L}$ and $\mathsf{M}$ and thereby gives three neccessary conditions for representability. 
Can the conditions of Maddux be adapted to the setting of $\mathsf{RDqRA}$?
\item Is it possible to use games 
to define a hierarchy of classes of distributive quasi relation algebras $\DqRA_n$ similar to those defined by Maddux~\cite{Mad83} and Jipsen and \v{S}emrl~\cite{JS23}? In this case we would hope that $\DqRA_4$ would be the class of all DqRAs, and $\DqRA_\omega = \RDqRA$.
\item Is there a method for constructing qRAs of binary relations that are not necessarily distributive, but where the monoid operation is still given by relational composition? 
\end{enumerate}

\begin{figure}[ht!]
\centering
\begin{tikzpicture}[scale=0.49]
\begin{scope}
\node[draw,circle,inner sep=1.5pt,fill] (0) at (0,0) {};
\node[unshaded] (a) at (0,1.5) {};
\node[unshaded] (b) at (-4,3.5) {};
\node[unshaded] (c) at (-3,3.5) {};
\node[unshaded] (d) at (-2,3.5) {};
\node[unshaded] (e) at (-1,3.5) {};
\node[unshaded] (f) at (1,3.5) {};
\node[unshaded] (g) at (2,3.5) {};
\node[unshaded] (h) at (3,3.5) {};
\node[unshaded] (i) at (4,3.5) {};
\node[unshaded] (j) at (0,5.5) {};
\node[draw,circle,inner sep=1.5pt,fill] (1) at (0,7.5) {};
\draw[order] (0)--(a)--(b)--(j)--(1);
\draw[order] (0)--(a)--(c)--(j)--(1);
\draw[order] (0)--(a)--(d)--(j)--(1);
\draw[order] (0)--(a)--(e)--(j)--(1);
\draw[order] (0)--(a)--(f)--(j)--(1);
\draw[order] (0)--(a)--(g)--(j)--(1);
\draw[order] (0)--(a)--(h)--(j)--(1);
\draw[order] (0)--(a)--(i)--(j)--(1);
\node[label,anchor=east,xshift=1pt] at (0) {$0$};
\node[label,anchor=east,xshift=1pt,yshift=-2pt] at (a) {$a$};
\node[label,anchor=east,xshift=1pt,yshift=3pt] at (j) {$b$};
\node[label,anchor=east,xshift=2pt] at (1) {$1$};
\node[label,anchor=north,yshift=-1pt] at (0) {(i)};
\end{scope}

\begin{scope}[xshift=9cm]
\node[draw,circle,inner sep=1.5pt,fill] (0) at (0,0) {};
\node[unshaded] (a) at (0,1.5) {};
\node[unshaded] (b) at (-3,3) {};
\node[unshaded] (c) at (-1.5,3) {};
\node[unshaded] (d) at (1.5,3) {};
\node[unshaded] (e) at (3,3) {};
\node[unshaded] (f) at (-3,4.5) {};
\node[unshaded] (g) at (-1.5,4.5) {};
\node[unshaded] (h) at (1.5,4.5) {};
\node[unshaded] (i) at (3,4.5) {};
\node[unshaded] (j) at (0,6) {};
\node[draw,circle,inner sep=1.5pt,fill] (1) at (0,7.5) {};
\draw[order] (0)--(a)--(b)--(f)--(j)--(1);
\draw[order] (0)--(a)--(c)--(g)--(j)--(1);
\draw[order] (0)--(a)--(c)--(f)--(j)--(1);
\draw[order] (0)--(a)--(d);
\draw[order] (0)--(a)--(d)--(g)--(j)--(1);
\draw[order] (0)--(a)--(d)--(i)--(j)--(1);
\draw[order] (0)--(a)--(e)--(i)--(j)--(1);
\draw[order] (0)--(a)--(e)--(h)--(j)--(1);
\draw[order] (0)--(a)--(b)--(h)--(j)--(1);
\node[label,anchor=east,xshift=1pt] at (0) {$0$};
\node[label,anchor=east,xshift=1pt,yshift=-2pt] at (a) {$a$};
\node[label,anchor=east,xshift=1pt,yshift=3pt] at (j) {$b$};
\node[label,anchor=east,xshift=2pt] at (1) {$1$};
\node[label,anchor=east,xshift=1pt] at (b) {$c$};
\node[label,anchor=east,xshift=1pt] at (f) {${\sim}c$};
\node[label,anchor=west,xshift=-2pt] at (c) {${\sim}{\sim}c$};
\node[label,anchor=west,xshift=-0.5pt,yshift=2pt] at (g) {${\sim}^3 c$};
\node[label,anchor=west,xshift=-3.5pt,yshift=-0.4] at (h) {$-c$};
\node[label,anchor=west,xshift=-1.2pt] at (d) {$d$};
\node[label,anchor=west,xshift=-2pt] at (i) {${-}^3 c$};
\node[label,anchor=west,xshift=-2pt] at (e) {${-}{-}c$};
\node[label,anchor=north,yshift=-1pt] at (0) {(ii)};
\end{scope}
\end{tikzpicture}
\caption{Two 4-periodic qRAs. If we label the elements in the antichain of (i) from $c$ to $j$ from left to right, then $-c = i$, $-i = g$, $-g = d$, $-d = j$, ${\sim}c = h$, ${\sim}h = f$, ${\sim}f = e$, ${\sim}e = j$, $c' = c$, $d' =e$, $f'= g$, $h' = i$ and $j'=j$. For (ii) we have $c'= {\sim}^3c$, $d' = -c$ and $({--}c)' = {-}^3c$.} 
\label{fig:evenperiodic}
\end{figure}
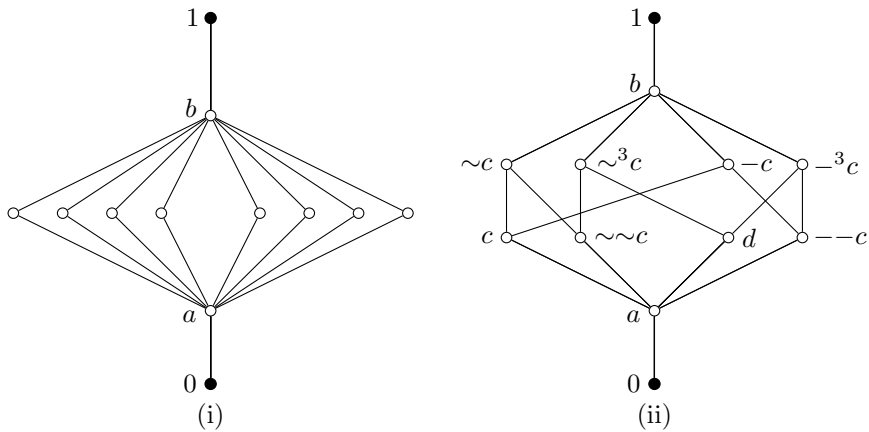

\vspace{-0.3cm}

Lastly, returning to the remarks at the end of Section~\ref{sec:qRAs}, we are also interested in the following:
\begin{enumerate}\setcounter{enumi}{5}
\item In Proposition~\ref{prop:odd-periodic-iso} we gave an isomorphism between qRAs
that are odd periodic. We would like to identify the conditions that allow such an isomorphism to exist for even periodic qRAs. The two examples in Figure~\ref{fig:evenperiodic} are both 4-periodic. 

For the algebra on the left, by letting 
$h(x)={\sim}x$ for all $x$ in the antichain and $h$ fixing $0,a,b,1$ we get an isomorphism between $\mathbf{A}$ and $\mathbf{A}^{\triangledown 1}$.  
For the algebra on the right, any order isomorphism $h$ must have 
$h({\sim}c)\in\{c,{\sim\sim} c,d,{--}c\}$. Testing each of these options reveals that such an $h$ will not be a qRA isomorphism from $\mathbf{A}$ to $\mathbf{A}^{\triangledown 1}$. 

Notice that neither lattice is distributive, although we do not think this is significant. High periodicity requires large antichains in the lattice, and wide distributive lattices have more elements than their non-distributive counterparts and hence are harder to find with Mace4. 
\end{enumerate}

\subsection*{Acknowledgements}

The authors would like to thank Peter Jipsen for many helpful discussions on this topic. We are grateful to the anonymous referee for their valuable comments. 

\section*{Declarations}

\subsection*{Ethical approval}
Not applicable. 

\subsection*{Competing interests} 
Not applicable. 

\subsection*{Authors' contributions} 
All authors contributed equally.

\subsection*{Availability of data and materials}
Not applicable.

\subsection*{Funding}
The first author acknowledges the support of the National Research Foundation (NRF) of South Africa (grant 127266).


\end{document}